\newtheoremstyle{plainsl}%
	{\topsep}
	{\topsep}
	{\slshape} 
	{}
	{\normalfont\bfseries}
	{.}
	{ }
	{}
\theoremstyle{plainsl}
\newtheorem{theorem}{Theorem}[section]
\newtheorem{lemma}[theorem]{Lemma}
\newtheorem{corollary}[theorem]{Corollary}
\newtheorem{proposition}[theorem]{Proposition}
\newtheorem{problem}[theorem]{Problem}}
\theoremstyle{remark}
\newcommand\cref[1]{Corollary~\ref{cor:#1}}
\newcommand\sqr[2]{{\vbox{\hrule height.#2pt
    \hbox{\vrule width.#2pt height#1pt \kern#1pt
        \vrule width.#2pt}\hrule height.#2pt}}}
\renewcommand\qed{%
	\ifmmode\eqno\sqr53
	\else\nolinebreak\ \hfill\sqr53\medbreak\fi}
\numberwithin{equation}{section}
\DeclareMathOperator\rk{rk}
\DeclareMathOperator\row{row}
\DeclareMathOperator\ns{null}
\DeclareMathOperator\GF{GF}
\newcommand\ints{{\mathbb Z}}
\title{Hardness of Computing Clique Number and Chromatic Number For Cayley Graphs}
\author{Chris Godsil and Brendan Rooney}
\date{\today}
\begin{document}

\maketitle

\begin{abstract}
Computing the clique number and chromatic number of a general graph are well-known NP-Hard problems. Codenotti et al. (Bruno Codenotti, Ivan Gerace, and Sebastiano Vigna. Hardness results and spectral techniques for combinatorial problems on circulant graphs. \emph{Linear Algebra Appl.}, 285(1-3): 123--142, 1998) showed that computing clique number and chromatic number are still NP-Hard problems for the class of circulant graphs. We show that computing clique number is NP-Hard for the class of Cayley graphs for the groups $G^n$, where $G$ is any fixed finite group (e.g., cubelike graphs). We also show that computing chromatic number cannot be done in polynomial time (under the assumption $\text{P}\neq \text{NP}$) for the same class of graphs. Our presentation uses free Cayley graphs. The proof combines free Cayley graphs with quotient graphs and Goppa codes.
\end{abstract}

\let\thefootnote\relax\footnote{\emph{AMS Classification:} 05C15, 05C50, 05C85, 68R10}
\let\thefootnote\relax\footnote{\emph{Keywords:} Chromatic Number, Clique Number, Computational Complexity, Cayley Graphs, Cubelike Graphs, Goppa Codes}


In his celebrated 1972 paper \cite{Karp}, Karp established the NP-Completeness of 21 combinatorial problems. Amongst those problems are the CLIQUE problem and the CHROMATIC NUMBER problem. CLIQUE takes a graph $X$ and an integer $k$ and decides whether $X$ contains a clique of size $k$ as a subgraph. CHROMATIC NUMBER takes a graph $X$ and an integer $k$ and decides whether there is a proper colouring of $X$ using at most $k$ colours.

The \emph{clique number} of a graph $X$ is the size of the largest clique contained in $X$, and is denoted by $\omega(X)$. Since deciding whether a general graph $X$ contains a clique of size $k$ is NP-Complete, the problem of computing $\omega(X)$ is NP-Hard. The \emph{chromatic number} of a graph $X$ is the smallest integer $k$ such that $X$ has a proper $k$-colouring, and is denoted by $\chi(X)$. Again, since deciding whether a general graph $X$ can be coloured properly using at most $k$ colours is NP-Complete, computing $\chi(X)$ is NP-Hard.

Some of the graph theoretic problems in Karp's list become easier when restricted to a subclass of graphs. For instance, deciding whether a graph $X$ has a subset of vertices with size $k$ that covers all of the edges of $X$ is NP-Complete. However, if $X$ is bipartite the Hungarian Algorithm finds a minimum vertex cover of $X$ in polynomial time. There are also subclasses of graphs for which computing clique number and chromatic number are computable in polynomial time. For example, planar graphs have polynomial time computable clique numbers, and graphs with treewidth at most $k$ have polynomial time computable chromatic numbers \cite{Arnborg}.

In 1998, Codenotti, Gerace, and Vigna \cite{Codenotti} proved that computing clique number, and chromatic number, are NP-Hard when restricted to the class of circulant graphs (a \emph{circulant} is a Cayley graph for a group $\ints_m$). Since circulants are Cayley graphs, they are vertex transitive. One might hope, or expect, that the assumption of vertex transitivity would confer some advantage when approaching computational problems on graphs. Codenotti et al.'s results show that this is not the case, and also  raise the question of whether there are classes of Cayley graphs on which these problems become easier.

Our main results in this paper are analogues of Codenotti et al.'s hardness results for a different class of Cayley graphs. Our results apply to the class of Cayley graphs for the groups $G^n$, where $G$ is any fixed finite group. When $G=\ints_2$, this is the class of \emph{cubelike graphs}. We show that computing clique number for these graphs is an NP-Hard problem (Theorem \ref{MainTheorem}). We also show that computing chromatic number for these graphs cannot be done in polynomial time under the assumption that $\text{P}\neq\text{NP}$ (Theorem \ref{ChromCor}).

We prove that computing clique number is NP-Hard by reducing computing the clique number of a general graph $X$ to computing the clique number of a Cayley graph on a group $G^n$. The key to this reduction is providing a construction of a Cayley graph $\Gamma$ from $X$ so that $|\Gamma|$ is polynomially bounded in $|X|$, and so that $\omega(X)$ is easily computed from $\omega(\Gamma)$. We begin with a construction used by Babai and S{\'o}s \cite{BabaiSos} to embed graphs in Cayley graphs. This construction leads naturally to free Cayley graphs. We construct a free Cayley graph $G(X)$ from $X$ so that the cliques in $X$ can be recovered from the cliques in $G(X)$. To complete our construction we will quotient $G(X)$ over a suitably chosen linear code. Specifically, we will give a Goppa code that satisfies the desired properties.

To prove that computing chromatic number cannot be done in polynomial time we use a similar strategy to Codenotti et al. In \cite{Codenotti}, the chromatic number result is proven by reducing computing clique number for general graphs to computing chromatic number for circulant graphs. This reduction does not work for the Cayley graphs we consider. However, we adapt this approach to show that if chromatic number can be computed in polynomial time for Cayley graphs for the groups $G^n$, then for any graph $X$, the clique number of $X$ can be approximated to within a constant factor in polynomial time. This completes the proof by an inapproximability result of H{\aa}stad \cite{Hastad}.

Constructing our reductions using free Cayley graphs situates them in a more general framework. Free Cayley graphs are relatively new and unstudied objects (they appear in a recent paper by Beaudou, Naserasr and Tardif \cite{Beaudou}). Our complexity results rely on the clique structure of free Cayley graphs.

This paper has roughly three parts. The first part gives the background necessary for our results and their proofs. Sections \ref{QuotientGraphs} and \ref{GoppaCodes} contain some basic graph theory and coding theory. Sections \ref{Embeddings}, \ref{FreeCayleyGraphs} and \ref{AuxiliaryGraphs} introduce free Cayley graphs. The second part contains the intermediate results needed for our reductions, and the proofs of our main results. Section \ref{CliquePrelim} develops the relationship between cliques in a graph $X$ and cliques in the free Cayley graph $G(X)$. Sections \ref{Zp} and \ref{LinearCodes} focus on the groups $\ints_p$ for $p$ prime. The main results of this paper are given in Sections \ref{CliqueNumber} and \ref{ChromaticNumber}. The last part consists of some additional observations. We consider how our construction can be applied to embeddings in Section \ref{EmbeddingsRe}, and we take a closer look at free Cayley graphs in Section \ref{Structure}.

\section{Graphs, and Quotient Graphs}\label{QuotientGraphs}

The main objects in this paper are Cayley graphs. Instead of giving an exhaustive list of definitions, we refer the reader to Godsil and Royle \cite{GodsilRoyle} for the basics. In this section, we give a selection of definitions and notation. We also spend some time reviewing quotients of graphs, and of Cayley graphs, as quotients are a crucial tool in our construction.

In this paper we will typically refer to graphs using $X$ and $Y$, and groups using $G$ and $H$. For a vertex $i$ of a graph $X$, the \emph{neighbourhood of $i$ in $X$} is the set of all vertices $j$ that are adjacent to $i$. If $S\subseteq V(X)$, then we denote the subgraph of $X$ induced by $S$ by $X[S]$. We denote the subgraph of $X$ induced by the neighbourhood of $i$ by $X[i]$.

The \emph{independence number} of $X$ is the size of a largest independent set (or coclique), denoted $\alpha(X)$. We denote the \emph{line graph} of a graph $X$ by $L(X)$. We denote the \emph{complete graph} on $v$ vertices by $K_v$. From graphs $X$ and $Y$, we define the \emph{Cartesian product} $X\Box Y$ as follows. The vertex set of $X\Box Y$ is $V(X)\times V(Y)$. Two vertices $(a,x)$ and $(b,y)$ are adjacent if and only if either $a=b$ and $x$ is adjacent to $y$ in $Y$, or $a$ is adjacent to $b$ in $X$ and $x=y$.

Let $X$ and $Y$ be graphs, and $h:X\rightarrow Y$ be a homomorphism (a map that preserves edges). If for all $x\in V(X)$, the map induced by $h$ from the neighbours of $x$ to the neighbours of $h(x)$ is a bijection, then $h$ is a \emph{local isomorphism}. The map $h$ is a {\it covering map} if $h$ is a local isomorphism and a surjection. We say that $X$ is a {\it cover} of $Y$. If $|h^{-1}(y)|=r$ for all $y\in V(Y)$, we say that $X$ is an {\it $r$-fold cover} of $Y$.

If $G$ is a finite group, and $C\subseteq G$, then the \emph{Cayley graph} $X=X(G,C)$ has vertex set $V(X)=G$, and $a,b\in G$ are adjacent in $X$ if and only if $ab^{-1}\in C$. The set $C$ is the \emph{connection set}. Note that in order for this construction to produce a graph, we must have that $a^{-1}\in C$ for all $a\in C$. Also note that $X$ will have no loops if and only if $\text{Id}\notin C$, and if $\text{Id}\in C$ then $X$ has a loop on each vertex. Cayley graphs are \emph{vertex transitive}, for any $a,b\in G$, there is an automorphism of $X$ mapping $a$ to $b$.

Let $X$ be a graph, and $\Pi$ be a partition of $V(X)$. The {\it quotient graph} $X/\Pi$ is the graph on the cells of $\Pi$ with adjacency defined as follows. For $A,B$ cells of $\Pi$, we add an edge between $A$ and $B$ for every pair of  vertices $a\in A$ and $b\in B$ so that $a$ and $b$ are adjacent in $X$. Note that this graph may have loops and multiple edges.

Subgroups give natural partitions of Cayley graphs. For $H\leq G$, the cosets of $H$ partition the elements of $G$ into cells of equal size. We denote the partition induced by $H$ as $\Pi_H$. If $X$ is a Cayley graph for the group $G$, then we denote the quotient graph of $X$ with respect to the partition $\Pi_H$ as $X_H=X/\Pi_H$. Using a partition of $G$ into cosets of $H$, rather than an arbitrary partition, gives $\Pi_H$ additional structure.

If $\Pi=\{\pi_1,\ldots,\pi_k\}$ is a partition of the vertices of a graph $X$ we call $\Pi$ an {\it equitable partition} if for every $x\in\pi_i$, the number of neighbours of $x$ in $\pi_j$ depends only on $i$ and $j$. The following result is folklore and well-known. The proof is straightforward, and provided as Proposition 3.6.1 in \cite{Rooney}.
\begin{proposition}\label{CosetPartitionEquitable}
If $H\leq G$, then $\Pi_H$ is an equitable partition of any Cayley graph $X$ for $G$.\qed
\end{proposition}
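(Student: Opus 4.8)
The plan is to deduce equitability from the symmetry of the Cayley graph rather than from a direct edge count, since the cosets of $H$ are precisely the orbits of a natural group of automorphisms. First I would observe that for each $g\in G$ the right translation $\rho_g\colon x\mapsto xg$ is an automorphism of $X=X(G,C)$: if $a$ and $b$ are adjacent, then $ab^{-1}\in C$, and $(ag)(bg)^{-1}=a\,gg^{-1}\,b^{-1}=ab^{-1}\in C$, so $\rho_g(a)$ and $\rho_g(b)$ are adjacent as well (and the converse follows by applying $\rho_{g^{-1}}$). Restricting to $g\in H$ gives a group of automorphisms $R_H=\{\rho_h:h\in H\}$, and the orbit of any $x\in G$ under $R_H$ is $\{xh:h\in H\}=xH$. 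Thus the cells of $\Pi_H$ are exactly the $R_H$-orbits on $V(X)$.

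The core of the argument is then the general principle that the orbit partition of any group of automorphisms is equitable, which I would specialize to $R_H$. Two facts combine here. On one hand, each $\rho_h\in R_H$ fixes every cell of $\Pi_H$ setwise, since $\rho_h(xH)=xHh=xH$ because $Hh=H$. On the other hand, each $\rho_h$ maps the neighbourhood of $x$ bijectively onto the neighbourhood of $\rho_h(x)$, as it is a graph automorphism.

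Putting these together completes the proof. Fix two cells $aH$ and $bH$ of $\Pi_H$, and let $x,x'$ be any two vertices in the same cell $aH$. Because $x$ and $x'$ lie in a common $R_H$-orbit, there is some $h\in H$ with $x'=\rho_h(x)$. Since $\rho_h$ carries the neighbours of $x$ bijectively to the neighbours of $x'$ and fixes the cell $bH$ setwise, it restricts to a bijection between the neighbours of $x$ lying in $bH$ and the neighbours of $x'$ lying in $bH$. Hence these two sets have the same size, so the number of neighbours a vertex of $aH$ has in $bH$ depends only on the two cells and not on the chosen vertex, which is exactly the defining condition for $\Pi_H$ to be equitable.

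I do not expect a serious obstacle here, as the statement is elementary; the only point that requires care is the observation that the automorphisms $\rho_h$ preserve each coset setwise (so that the target cell is genuinely fixed and the neighbour counts match up), which is what makes the orbit partition equitable rather than merely permuting the cells. As an alternative to the symmetry argument, one could argue purely by counting: the neighbours of $a$ form the set $Ca$, and applying the bijection $x\mapsto xh^{-1}$ shows $|Cah\cap bH|=|Ca\cap bH|$ since $bHh^{-1}=bH$, giving the same conclusion directly.
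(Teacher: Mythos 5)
Your proof is correct. The paper itself does not include a proof of this proposition (it defers to Proposition 3.6.1 of the cited thesis), but your argument is the standard one: right translations $\rho_h$ for $h\in H$ are automorphisms of $X(G,C)$ that fix each coset $aH$ setwise, so they carry the neighbours of $x$ in $bH$ bijectively onto the neighbours of $xh$ in $bH$; the counting variant via $|Ca\cap bH|$ is equally valid and amounts to the same thing.
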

\noindent As an immediate consequence, we have that if $X=X(G,C)$, and $H\leq G$ induces partition $\Pi_H$, then $X_H$ is a regular multigraph (i.e., each pair of, not necessarily distinct, vertices is connected by the same number of edges).

For quotient graphs of this form, we will abusively use $X_H$ to denote the simplification of this graph. That is, we take $X_H$ to be the graph $X/{\Pi_H}$ with loops deleted and multiple edges replaced by single edges.

For Abelian groups (e.g., $G=\ints_p^n$), we can say more about the simplified graph $X_H$. Given a group $G$ and a normal subgroup $H$ of $G$, the {\it quotient group} $G/H$ is the group on the cosets of $H$ in $G$ with group operation defined as follows. Given cosets $H+a$ and $H+b$ we define
\[
(H+a)+(H+b)=H+(a+b).
\]
It is straightforward to show that this operation is well-defined, and defines a group.

If $H$ is a normal subgroup of $G$, then the graph $X_H$ is a Cayley graph for the quotient group. This fact appears as Lemma 2.4 in \cite{Watkins}, and we re-state it here.
\begin{proposition}\label{QuotientCayley}
If $X=X(G,C)$ and $H$ is a normal subgroup of $G$, then $X_H=X(G/H,C')$ where $C'=\{H+g\,:\,g\in C\setminus H\}$.\qed
\end{proposition}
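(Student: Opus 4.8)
The plan is to prove the stated graph equality by checking that $X_H$ and $X(G/H,C')$ have the same vertex set and the same edge set. The vertices of $X_H$ are the cells of the partition $\Pi_H$, which are exactly the cosets $H+a$ of $H$ in $G$, and these are precisely the elements of the quotient group $G/H$; so the two graphs share the vertex set $G/H$, and by Proposition \ref{QuotientCayley}'s hypotheses the quotient group is well-defined. It then remains to verify that the adjacency relations coincide. The one point that needs care is that $X_H$ denotes the \emph{simplified} quotient (loops deleted, multiple edges collapsed), so I must confirm that deleting loops corresponds exactly to the exclusion of $H$ in the set $C\setminus H$ used to define $C'$.

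I would establish the adjacency equivalence by a direct double implication, using coset representatives. Two distinct cosets $H+a$ and $H+b$ are adjacent in the (unsimplified, hence also simplified) quotient $X_H$ if and only if there exist representatives $a'\in H+a$ and $b'\in H+b$ that are adjacent in $X$, i.e. $a'-b'\in C$. Writing $a'=a+h_1$ and $b'=b+h_2$ with $h_1,h_2\in H$, the difference $g:=a'-b'$ satisfies $H+g=H+(a-b)$, since $h_1-h_2\in H$. For the reverse direction, given $H+(a-b)=H+g$ with $g\in C\setminus H$, I set $a'=a$ and $b'=a-g=b+\big((a-b)-g\big)$; the correction term lies in $H$, so $b'\in H+b$, and $a'-b'=g\in C$ exhibits the required edge in $X$. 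Thus $H+a$ and $H+b$ are adjacent in $X_H$ exactly when $H+(a-b)$ equals $H+g$ for some $g\in C$, which is the adjacency rule of $X(G/H,C')$.

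The main obstacle, and the reason $C'$ is defined with $C\setminus H$ rather than all of $C$, is precisely the loop/identity issue. If the common difference $g$ lies in $H$, then $H+(a-b)=H$, meaning $H+a=H+b$; such an edge of $X$ joins two vertices in the same cell and so becomes a loop in the quotient, which the simplification deletes. The restriction to $g\in C\setminus H$ removes exactly these contributions, so the identity coset $H$ is never placed in $C'$ and $X(G/H,C')$ has no loops, matching the loopless simplified $X_H$. To finish, I would check that $C'$ is a legitimate connection set: since $C$ is symmetric and $H$ is a subgroup, $g\in C\setminus H$ implies $-g\in C\setminus H$, so $H-g=-(H+g)$ lies in $C'$ whenever $H+g$ does. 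Together these verifications show that $X(G/H,C')$ is a simple Cayley graph with the same vertices and edges as $X_H$, completing the proof.
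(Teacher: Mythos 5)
Your proposal is correct. Note that the paper does not prove this proposition at all --- it is quoted as Lemma~2.4 of \cite{Watkins} and restated with a \qed --- so there is no in-paper argument to compare against; your direct verification (matching vertex sets, a two-way check of adjacency via coset representatives, and the observation that discarding $C\cap H$ corresponds exactly to deleting loops in the simplification) is exactly the standard proof and fills the gap cleanly. The one place where you should be slightly more careful is the step asserting $H+(a'-b')=H+(a-b)$ from $a'=a+h_1$, $b'=b+h_2$: written additively this reads as if it only needed $h_1-h_2\in H$, but for a non-abelian $G$ the difference $a'-b'$ is $a+h_1-h_2-b$, and moving the subgroup element past $a$ (and similarly, in your reverse direction, showing $a-g$ lies in $H+b$) is precisely where normality of $H$ is used. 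Since the paper only ever applies the proposition to abelian groups this is harmless there, but in the generality of the statement you should make the conjugation explicit rather than letting the additive notation commute terms for you. Your checks that $C'$ is inverse-closed and that the identity coset is excluded are both correct and worth keeping.
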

\noindent When $G$ is Abelian, all subgroups of $G$ are normal. So Proposition \ref{QuotientCayley} implies that for any $H\leq G$, the graph $X_H$ is a Cayley graph for $G/H$.

\section{Linear Codes, and Goppa Codes}\label{GoppaCodes}

We will use linear codes to construct quotient graphs for Cayley graphs on groups $\ints_p^n$ for prime $p$. We give a brief account of the properties of a code that will allow us to construct these quotient graphs while maintaining certain properties of the original graph. We refer the reader to MacWilliams and Sloane \cite{MacWilliamsSloane} for a proper introduction.

Given a prime power $q$ and an integer $n$, a {\it $q$-ary linear code} (or {\it code}) is a subspace $D$ of the vector space $\GF(q)^n$. The {\it block length} of $D$ is the length of the vectors in $D$ (or the dimension of the ambient space, $\GF(q)^n$). The {\it size} of $D$ is the number of vectors in $D$, and is equal to $q^k$ where $k$ is the dimension of $D$ as a subspace of $\GF(q)^n$.

The {\it distance} (or {\it Hamming distance}) between two vectors in $x,y\in \GF(q)^n$ is the number of indices $1\leq i\leq n$ for which $x_i\neq y_i$. We denote the distance between $x$ and $y$ by $d(x,y)$. Given a code $D$, the {\it minimum distance} (or {\it distance}) of $D$ is the minimum distance between any two elements of $D$,
\[
d=\min\{d(x,y)\,:\,x,y\in D\}.
\]

The {\it weight}, $w(x)$, of a vector $x\in D$ is the number of indices $1\leq i\leq n$ so that $x_i\neq 0$. So ${\bf 0}\in D$ is the unique codeword with weight zero. Note that if $z\in \GF(q)^n$,
\[
d(x,y)=d(x-z,y-z).
\]
So
\[
d(x,y)=d(x-y,{\bf 0})=w(x-y).
\]
Since $D$ is a subspace, $x-y\in D$ for all $x,y\in D$, and we can express the distance of $D$ as the minimum weight of a non-zero codeword,
\[
d=\min\{w(x)\,:\,x\in D\setminus\{{\bf 0}\}\}.
\]

A linear code $D$ is a subspace of a vector space $\GF(q)^n$; so $D$ has a basis, and can be expressed as the row space of a matrix $B$. We call $B$ the {\it generator matrix} of $D$. If $D$ has rank $k$, and block length $n$, then $B$ is a $k\times n$ matrix with elements from $\GF(q)$. We can convert $B$ into reduced row-echelon form, and so we may assume that $B$ takes the form
\[
B=[I_k | A]
\]
where $I_k$ is the $k\times k$ identity matrix, and $A$ is a $k\times (n-k)$ matrix over $\GF(q)$ (this is the \emph{standard form} of a generator matrix). The {\it dual code} of $D$ is the code defined by the generator matrix
\[
H=[-A^T|I_{n-k}].
\]
Since $H$ is a $(n-k)\times n$ matrix over $\GF(q)$, the code generated by $H$ has rank $n-k$ and block length $n$. Note that
\[
BH^T=HB^T=0,
\]
so $D=\ker(H^T)$. The matrix $H$ is called the {\it parity check matrix} for the code $D$.

Note that given a generator matrix $B$ for a code $D$, we can easily (i.e., in time polynomial in the length of $D$) find a parity check matrix for $D$. Likewise, given a parity check matrix for $D$ we can easily find a generator matrix for $D$.

We will make use of a specific class of linear codes, Goppa codes. In the remainder this section we give a brief description of Goppa codes, their properties and construction. Again we refer the reader to MacWilliams and Sloane for a more complete treatment (see Chapter 12, Section 3 of \cite{MacWilliamsSloane}).

A Goppa code is a linear code over a finite field $\GF(q)$ (where $q$ is any prime power). In order to specify the code we need two ingredients: a polynomial $g(x)$ whose coefficients are elements of $\GF(q^m)$; and a set $L\subseteq \GF(q^m)$ of non-roots of $g(x)$. The polynomial $g(x)$ is called the {\it Goppa polynomial}.

Let $L=\{\alpha_1,\ldots,\alpha_n\}$ be a subset of the non-roots of $g(x)$. Given a vector $a\in \GF(q)^n$, we define the rational function
\[
R_a(x)=\sum_{i=1}^n\frac{a_i}{(x-\alpha_i)}.
\]
The {\it Goppa code} $D(g,L)$ is the set of all vectors $a\in \GF(q)^n$ such that $R_a(x)=0$ in the polynomial ring $\GF(q^m)[x]/g(x)$. Note that $D(g,L)$ is a $q$-ary linear code with block length $n$.

To construct a Goppa code, we give a construction for a parity check matrix $H$ for the code, and use $H$ to find a generator matrix. Let $H'$ be the matrix whose entries are defined as
\[
H'[i,j]=\alpha_j^ig(\alpha_j)^{-1}
\]
for $1\leq i\leq r$ and $1\leq j\leq n$. The matrix $H'$ is a $r \times n$ matrix with entries in $\GF(q^m)$. Then $H$ is the matrix whose entries are obtained by replacing $H'[i,j]$ with the column vector in $\GF(q)^m$ corresponding to $H'[i,j]\in \GF(q^m)$ (i.e., using the standard representation of $\GF(q^m)$ as a set of polynomials of degree at most $m-1$ in $\GF(q)[x]$). Now $H$ is a $rm\times n$ matrix with entries in $\GF(q)$.

In this case $H$ may not have full rank. However,  $H$ is a parity check matrix for $D(g,L)$, and we can construct a matrix $B$ from $H$ with $\row(B)=\ns(H)$. Since $\rk(H)+\ns(H)=n$ and $\rk(H)\leq rm$, it follows that the rank of $D(g,L)$ is $k\geq n-rm$.

Finally, we need some information on the distance of Goppa codes. In general, if the Goppa polynomial $g(x)$ has rank $r$, then the code $D(d,L)$ will have distance $d\geq r+1$ \cite{MacWilliamsSloane}. Note that for both the rank and distance of $D(g,L)$ the specific values will depend on the polynomial chosen to construct the code. However, the bounds $k\geq n-rm$ and $d\geq r+1$ will suffice for our application.

\section{Embedding Graphs in Cayley Graphs}\label{Embeddings}

Let $X$ be a graph with vertex set $\{1,\ldots,v\}$, and $G$ be a finite group. Consider the following construction. Let $h:V(X)\rightarrow G$ be an assignment of vertices to group elements, and denote $h(i)=g_i$ (where the $g_i$ are not necessarily distinct). Now take $Y$ to be the Cayley graph $Y=X(G,\mathcal{C})$ where
\[
\mathcal{C} = \{g_ig_j^{-1}\,:\,\text{$i,j$ adjacent in $X$}\}.
\]
Depending on the assignment $h$, $Y$ and $X$ may share some structure.

The Cayley graph $Y=X(G,\mathcal{C})$ is connected if and only if there is a path from $0_G$ to each $g\in G$. So $Y$ is connected if and only if each $g\in G$ can be expressed as a product of elements of $\mathcal{C}$, or if and only if $\mathcal{C}$ is a generating set for $G$. Since in the above definition we don't require that $\mathcal{C}$ generates $G$, we are not guaranteed that $Y$ is connected. In fact, for the applications in this paper, connectedness is irrelevant, and our auxiliary graphs frequently will not be connected.

Note that $h:V(X)\rightarrow V(Y)$ is a homomorphism by construction (if $\{i,j\}\in E(X)$, then $g_ig_j^{-1}$ and $g_jg_i^{-1}$ are elements of $\mathcal{C}$, so $\{g_i,g_j\}\in E(Y)$). This does not immediately imply that we can derive structural properties of $X$ from those of $Y$. For example, if $h(i)=g$ for all $1\leq i\leq v$, then $Y$ is the disjoint union of $|G|$ loops. However, there are two straightforward conditions that ensure $h$ gives an embedding of $X$ in $Y$. First, we must have that each of the $g_i$ are distinct. If $h$ is an injection, then by our previous remarks, $Y$ will contain $X$ as a subgraph on the vertices $\{g_1,\ldots,g_v\}$. To ensure that $h$ is an embedding, we need $\{g_1,\ldots,g_v\}$ to induce a copy of $X$ in $Y$. This is achieved by requiring that $g_ig_j^{-1}\notin\mathcal{C}$ for all $\{i,j\}\notin E(X)$. To find an assignment $h$ satisfying the second condition, we need to know exactly the edges and non-edges of $X$, together with any relations satisfied by the elements of $G$. Alternatively, we can strengthen this condition by requiring that $g_ig_j^{-1}\neq g_kg_l^{-1}$ whenever $|\{i,j,k,l\}|\geq 3$.

Sets $\{g_1,\ldots,g_v\}\subseteq G$ that satisfy $g_ig_j^{-1}\neq g_kg_l^{-1}$ whenever $|\{i,j,k,l\}|\geq 3$ are referred to as \emph{Sidon sets of the second kind} by Babai and S{\'o}s in \cite{BabaiSos}. Note that this condition immediately implies that $h$ is an injection, and that $h$ gives an embedding of $X$ in $Y$. In fact, it is easy to see that if $\{g_1,\ldots,g_v\}\subseteq G$ is a Sidon set of the second kind, then any graph on $v$ vertices can be embedded in a Cayley graph for $G$ (this is given as Proposition 3.1 in \cite{BabaiSos}). 

In the following sections we will be mainly focussed on finite Abelian groups $G$. Using additive notation, $\{g_1,\ldots,g_v\}\subseteq G$ is a Sidon set of the second kind if $g_i-g_j\neq g_k-g_l$ whenever $|\{i,j,k,l\}|\geq 3$. Since $G$ is Abelian, this condition is equivalent to the condition that $g_i+g_j\neq g_k+g_l$ whenever $|\{i,j,k,l\}|\geq 3$, or that the 2-sums $g_i+g_j$ of elements of $\{g_1,\ldots,g_v\}$ are all distinct.

\section{Free Cayley Graphs}\label{FreeCayleyGraphs}

Following Neumann \cite{Neumann}, a class of groups $\mathcal{V}$ is a \emph{variety} if and only if it is closed with respect to taking subcartesian products and epimorphic images. The \emph{variety generated by $G$} consists of all homomorphic images of subgroups of direct products of $G$. We will refer to this variety as $\mathcal{V}_G$.

A group $G$ is \emph{relatively free} if it contains a generating set $S$ such that every mapping of $S$ into $G$ extends to an endomorphism. The set $S$ is called a \emph{set of free generators}. The variety $\mathcal{V}_G$ contains a relatively free group with $k$ free generators for every positive integer $k$. More precisely, for every positive integer $k$, there is a unique group $\mathcal{F}_k(G)$ such that any group in $\mathcal{V}_G$ which is generated by a set of $k$ elements is a homomorphic image of $\mathcal{F}_k(G)$. We will call $\mathcal{F}_k(G)$ the \emph{relatively free group on $k$ generators in $\mathcal{V}_G$}. 

For a finite group $G$ and integer $k$, let $g_1,\ldots,g_k$ be elements of $G^{|G|^k}$ so that each ordered $k$-tuple of elements of $G$ appears as some $(g_1[i],\ldots,g_k[i])$. Then we can take $\mathcal{F}_k(G)=\langle g_1,\ldots,g_k\rangle$. The generators $g_1,\ldots,g_k$ are called the {\it canonical generators} for $\mathcal{F}_k(G)$.

Let $G$ be a finite group, and let $X$ be a graph on $v$ vertices. Denote the canonical generators of the relatively free group $\mathcal{F}_v$ by $g_1,\ldots,g_v$. The {\it free Cayley graph} is the Cayley graph $G(X)=X(\mathcal{F}_v,\mathcal{C})$ where $\mathcal{C}$ is the connection set
\[
\mathcal{C} = \{g_ig_j^{-1}\,:\,\text{$i,j$ adjacent in $X$}\}.
\]
As in the previous section, the function $h:V(X)\rightarrow G(X)$ given by $h(i)=g_i$ is a homomorphism. It is also an embedding.
\begin{proposition}\label{SidonSet}
The set $\{g_1,\ldots,g_v\}$ is a Sidon set of the second kind.
\end{proposition}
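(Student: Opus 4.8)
The plan is to reduce the Sidon condition to an elementary statement about evaluations in $G$, exploiting the defining property of the canonical generators. Recall that $\mathcal{F}_v$ sits inside the direct product $G^{|G|^v}$, and that the generators $g_1,\ldots,g_v$ are chosen so that every ordered $v$-tuple of elements of $G$ occurs as a coordinate $(g_1[i],\ldots,g_v[i])$. For each coordinate $i$, projection onto that coordinate restricts to a homomorphism $\phi_i:\mathcal{F}_v\rightarrow G$ with $\phi_i(g_m)=g_m[i]$. Since every $v$-tuple appears as some coordinate, for any prescribed values $t_1,\ldots,t_v\in G$ there is a coordinate $i$ with $\phi_i(g_m)=t_m$ for all $m$. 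Hence an equation $g_ag_b^{-1}=g_cg_d^{-1}$ holds in $\mathcal{F}_v$ if and only if it holds coordinatewise, which by the previous sentence is equivalent to the word identity $t_at_b^{-1}=t_ct_d^{-1}$ holding for every substitution of elements $t_1,\ldots,t_v\in G$.

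With this translation in hand, I would argue by contradiction. Suppose $g_ag_b^{-1}=g_cg_d^{-1}$ with $|\{a,b,c,d\}|\geq 3$. Since at least three of the indices are distinct, the four-element list $a,b,c,d$ contains some index $p$ occupying exactly one of the four slots: if all four are distinct then every index does, while if exactly three are distinct the repeated index fills two slots and the remaining two each fill one. Now I would vary only the value $t_p$ assigned to $g_p$, holding all other $t_m$ fixed. Because $p$ appears in exactly one slot, exactly one side of $t_at_b^{-1}=t_ct_d^{-1}$ depends on $t_p$, and as a function of $t_p$ that side has the form $t_p u$ or $u t_p^{-1}$ for a fixed $u\in G$; each such map is a bijection of $G$. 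The other side is constant. Provided $G$ is nontrivial, a bijection of $G$ cannot be constant, so there is a choice of $t_p$, hence a coordinate $i$, at which the two sides differ. This contradicts the coordinatewise equality, so no such equation holds and $\{g_1,\ldots,g_v\}$ is a Sidon set of the second kind.

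The only real content lies in the first step, the observation that equality in $\mathcal{F}_v$ is precisely the universal validity of the corresponding identity over $G$; once this is set up, the Sidon condition becomes the transparent fact that a word containing a generator exactly once cannot be identically equal to a word omitting it. I expect the main obstacle to be purely bookkeeping: confirming that the case distinction for the location of the single-occurrence index $p$ is exhaustive, and recording the harmless standing assumption that $G$ is nontrivial (otherwise $\mathcal{F}_v$ is trivial and the statement is vacuous).
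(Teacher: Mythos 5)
Your proof is correct and follows essentially the same route as the paper: both arguments isolate an index appearing exactly once among $a,b,c,d$ and then exploit the fact that every $v$-tuple of elements of $G$ occurs as a coordinate to produce a coordinate (equivalently, a substitution) where the two sides of the supposed equality disagree. The paper simply specializes immediately to the tuple that is the identity everywhere except in the singly-occurring position, whereas you phrase the same step via bijectivity of translation; the extra remark that $G$ must be nontrivial is a fair (if pedantic) addition.
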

\begin{proof}
We prove that $g_ig_j^{-1}\neq g_kg_l^{-1}$ for any $|\{i,j,k,l\}|\geq 3$. Suppose we have a counter-example.

Since $|\{i,j,k,l\}|\geq 3$, there is some index that appears only once. Without loss of generality, suppose $i$ appears only once. Let $a$ be such that $(g_1[a],\ldots,g_v[a])$ has all coordinates equal to $0_G$ except for $g_i[a]=g\neq 0_G$. Now by assumption $g_ig_j^{-1}= g_kg_l^{-1}$ which implies that $g_i[a]g_j[a]^{-1}= g_k[a]g_l[a]^{-1}$. But this simplifies to $g=0_G$, contradicting our selection of $a$.
\qed
\end{proof}
\noindent Proposition \ref{SidonSet} immediately implies that $X$ embeds in $G(X)$. We refer to the embedding $h$ as the \emph{canonical embedding} of $X$ in $G(X)$.

Free Cayley graphs were originally introduced by Naserasr and Tardif (unpublished) who used them to obtain lower bounds on the chromatic numbers of Cayley graphs. They also appear in Beaudou et al. \cite{Beaudou}. Naserasr and Tardif showed that homomorphisms from a graph $X$ to Cayley graphs for a group $G$ factor using the canonical embedding of $X$ in $G(X)$.
\begin{theorem}[Naserasr and Tardif]\label{NasTar}
Let $X$ be a graph and $W$ be a Cayley graph for the group $G$. If $f:V(X)\rightarrow V(W)$ is a homomorphism of $X$ into $W$, then $f=\psi\circ h$ where $h$ is the canonical embedding of $X$ in $G(X)$, and $\psi:\mathcal{F}_v(G)\rightarrow G$ is a group homomorphism.
\end{theorem}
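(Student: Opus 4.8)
The plan is to build $\psi$ directly from the defining universal property of the relatively free group $\mathcal{F}_v(G)$ and then verify the identity $f=\psi\circ h$ essentially by construction. First I would record that $G$ itself lies in the variety $\mathcal{V}_G$, since $G$ is (trivially) a homomorphic image of itself; and more generally, because $\mathcal{V}_G$ is closed under subgroups, the subgroup $\langle f(1),\ldots,f(v)\rangle\leq G$ generated by the images of the vertices of $X$ also lies in $\mathcal{V}_G$. This is exactly what allows us to feed the group $G$ into the universal mapping property of $\mathcal{F}_v(G)$.

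Next, recall that the canonical generators $g_1,\ldots,g_v$ form a set of free generators of $\mathcal{F}_v(G)$ in $\mathcal{V}_G$: any assignment of $g_1,\ldots,g_v$ to elements of a group in $\mathcal{V}_G$ extends to a group homomorphism, and this extension is unique since the $g_i$ generate. I would therefore \emph{define} $\psi$ to be the extension of the assignment $g_i\mapsto f(i)$, for $1\leq i\leq v$, to a homomorphism $\psi:\mathcal{F}_v(G)\rightarrow G$; by the previous paragraph the targets $f(i)$ lie in a group in $\mathcal{V}_G$, so this extension exists and is a group homomorphism by fiat. The factorization is then immediate: for each vertex $i$ of $X$ we have $\psi(h(i))=\psi(g_i)=f(i)$, so $f=\psi\circ h$ on all of $V(X)$, as required.

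Finally, I would point out where the hypothesis that $f$ is a graph homomorphism actually does work, namely in upgrading $\psi$ from a mere group homomorphism to a graph homomorphism $G(X)\rightarrow W$. Concretely, if $a$ and $b$ are adjacent in $G(X)=X(\mathcal{F}_v,\mathcal{C})$, then $ab^{-1}\in\mathcal{C}$, so $ab^{-1}=g_ig_j^{-1}$ for some edge $\{i,j\}$ of $X$; since $\psi$ is a group homomorphism, $\psi(a)\psi(b)^{-1}=\psi(ab^{-1})=f(i)f(j)^{-1}$, and because $f$ carries the edge $\{i,j\}$ to an edge of $W=X(G,C)$, this element lies in the connection set $C$, so $\psi(a)$ and $\psi(b)$ are adjacent in $W$. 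I expect the only genuine subtlety to be confirming the closure properties of $\mathcal{V}_G$ that are needed to apply the universal property (that the relevant image subgroup remains in the variety); everything else follows formally from the fact that the canonical generators are free generators and that a homomorphism is determined by its values on a generating set.
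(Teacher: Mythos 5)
The paper states this theorem without proof, attributing it to Naserasr and Tardif, so there is no in-paper argument to compare against; judged on its own, your proof is correct and is the natural one. One small point worth flagging: the paper's definition of ``relatively free'' only guarantees that mappings of the free generators \emph{into $\mathcal{F}_v(G)$ itself} extend to endomorphisms, whereas you invoke the stronger (standard, but not literally stated) universal property that any assignment of the $g_i$ into an arbitrary member of $\mathcal{V}_G$ extends to a homomorphism. You can sidestep this entirely using the paper's concrete construction of $\mathcal{F}_v(G)=\langle g_1,\ldots,g_v\rangle\leq G^{|G|^v}$: since every ordered $v$-tuple of elements of $G$ occurs as some $(g_1[i],\ldots,g_v[i])$, choose an index $i_0$ with $(g_1[i_0],\ldots,g_v[i_0])=(f(1),\ldots,f(v))$ and let $\psi$ be the restriction to $\mathcal{F}_v(G)$ of the projection of $G^{|G|^v}$ onto coordinate $i_0$; this is manifestly a group homomorphism with $\psi(g_i)=f(i)$, giving $f=\psi\circ h$ at once. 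Your closing verification that $\psi$ is moreover a graph homomorphism from $G(X)$ to $W$ is not required by the literal statement but is exactly the content of the remark following the theorem (that every such $Y$ is a homomorphic image of $G(X)$), and your argument for it is correct.
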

\noindent In particular, Theorem \ref{NasTar} shows that any graph $Y$ constructed from a function $h$ as in Section \ref{Embeddings} is a homomorphic image of the free Cayley graph $G(X)$.

\section{Free Cayley Graphs for the Groups $\ints_p$}\label{AuxiliaryGraphs}

When $G$ is finite Abelian, the free Cayley graph $G(X)$ has a simpler description.

\begin{lemma}\label{FreeGroupA}
Let $G$ be a finite Abelian group with exponent $m$. Then $\mathcal{F}_k(G)\cong\ints_m^k$.
\end{lemma}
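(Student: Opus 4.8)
The plan is to work directly with the explicit model of the relatively free group given just above, namely $\mathcal{F}_k(G)=\langle g_1,\ldots,g_k\rangle\leq G^{|G|^k}$, where the canonical generators are chosen so that every $k$-tuple of elements of $G$ occurs as a coordinate $(g_1[j],\ldots,g_k[j])$. Since $G$ is Abelian, so is $G^{|G|^k}$, and hence $\mathcal{F}_k(G)$ is an Abelian group generated by the $k$ elements $g_1,\ldots,g_k$. Because $G$ has exponent $m$, every element of $G^{|G|^k}$ (in particular each $g_i$) has order dividing $m$. Writing the group additively, the assignment $e_i\mapsto g_i$ of the standard generators of $\ints_m^k$ therefore extends to a well-defined surjective homomorphism $\phi:\ints_m^k\to\mathcal{F}_k(G)$. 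The whole statement thus reduces to showing that $\phi$ is injective.

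To prove injectivity I would analyse an arbitrary relation. Suppose $\sum_{i=1}^k a_i g_i=0_G$ in $\mathcal{F}_k(G)$ for integers $a_1,\ldots,a_k$; equivalently this equation holds in each coordinate $j$, so $\sum_{i=1}^k a_i\,g_i[j]=0_G$ for all $j$. By the defining property of the canonical generators, the coordinate tuples $(g_1[j],\ldots,g_k[j])$ range over all of $G^k$, so this says
\[
\sum_{i=1}^k a_i h_i=0_G\qquad\text{for every }(h_1,\ldots,h_k)\in G^k.
\]
Specialising to the tuples supported on a single coordinate $t$ (take $h_i=0_G$ for $i\neq t$ and $h_t=h$ arbitrary) yields $a_t h=0_G$ for all $h\in G$ and each $t$.

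It remains to convert $a_t h=0_G$ for all $h\in G$ into the divisibility $m\mid a_t$. This is the one place where the hypothesis on the exponent does real work: I would invoke the standard fact that a finite Abelian group of exponent $m$ contains an element of order exactly $m$ (the exponent is the least common multiple of the element orders, and in an Abelian group this lcm is attained by a single element). Choosing such an $h$ of order $m$, the relation $a_t h=0_G$ forces $m\mid a_t$. As this holds for every $t$, the tuple $(a_1,\ldots,a_k)$ is zero in $\ints_m^k$, so $\ker\phi$ is trivial and $\phi$ is an isomorphism. The genuinely substantive point is exactly this attainment of the exponent; everything else is bookkeeping with the canonical generators. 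Conceptually the result records that $\mathcal{V}_G$ is the variety of Abelian groups of exponent dividing $m$, whose free object on $k$ generators is $\ints_m^k$.
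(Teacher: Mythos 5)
Your proof is correct and is essentially the paper's argument run in the opposite direction: the paper builds the map $\mathcal{F}_k(G)\to\ints_m^k$ by establishing a unique normal form $\sum_i\alpha_i g_i$ with $0\leq\alpha_i<m$, while you build the surjection $\ints_m^k\to\mathcal{F}_k(G)$ and kill its kernel, but both hinge on the same two ingredients --- the coordinate of the canonical generators supported on a single $g_i$, and an element of $G$ of order exactly $m$ (a fact the paper uses without comment and you justify explicitly).
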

\begin{proof}
Since $G$ is Abelian, for any $f\in \mathcal{F}_k(G)$, we can write $f$ in terms of the canonical generators as
\[
f = \sum_{i=1}^k\alpha_ig_i,
\]
where the $\alpha_i$ are non-negative integers.

Moreover, every $f$ has a unique expression of this form where $0\leq\alpha_i<m$ for each $\alpha_i$. To see that this is the case, let $g\in G$ be an element with order $m$. By definition, for each $1\leq i\leq k$ there is some index $j_i$ so that
\[
(0_G,\ldots,g,\ldots,0_G)=(g_1[j_i],\ldots,g_k[j_i])
\]
(where the $g$ appears in the $i$th coordinate). Now $f[i]=\alpha_ig$ implies that there is a unique $0\leq\alpha_i<m$ that satisfies this equation. This is true for each $1\leq i\leq k$.

Define $h:\mathcal{F}_k(G)\rightarrow\ints_m^k$ by
\[
h(f)=h\left(\sum_{i=1}^k\alpha_ig_i\right)=\sum_{i=1}^m\alpha_ie_i,
\]
where each $0\leq\alpha_i<m$, and $e_i$ is the element of $\ints_m^k$ with $1$ in the $i$th coordinate, and all other coordinates equal to $0$.

Since the expression of $f$ is unique, $h$ is invertible, and hence gives a bijection between $\mathcal{F}_k(G)$ and $\ints_m^k$. It is easy to see that $h$ is a homomorphism, so $h$ is an isomorphism.\qed
\end{proof}

Note that the isomorphism $h$ gives an isomorphism between the graph $G(X)$ and the graph $X(\ints_m^v,\mathcal{C})$ where
\[
\mathcal{C} = \{e_i-e_j\,:\,\text{$i,j$ adjacent in $X$}\}.
\]

\section{Cliques in $X$ and $G(X)$}\label{CliquePrelim}

Recall the construction from Section \ref{Embeddings}. We have a graph $X$, a finite group $G$, and an assignment $h(i)=g_i$ of vertices to group elements. This gives a Cayley graph $Y=X(G,\mathcal{C})$ where
\[
\mathcal{C} = \{g_ig_j^{-1}\,:\,\text{$i,j$ adjacent in $X$}\}.
\]
The condition that the 2-sums of elements in $\{g_1,\ldots,g_v\}$ are distinct allows us to conclude that $X$ embeds in $Y$. If we strengthen this condition and require that the 3-sums are distinct, then $X$ and $Y$ share more structure.

In \cite{Codenotti}, Codenotti et al. show that if $G=\ints_m$, and the 3-sums $g_i+g_j+g_k$ of elements in $\{g_1,\ldots,g_v\}$ are distinct, then $\omega(X)=\omega(Y)$ (note that summands in the 3-sums may be repeated). Their proof can easily be adapted to apply to any finite Abelian group $G$. We give that adaptation here. Note that for $S\subseteq G$, if the the 3-sums of $S$ are distinct, then the 2-sums are also distinct. Also note that 3-sum distinct sets (with more than one element) cannot exist in groups of exponent at most three.
\begin{lemma}\label{AbGroup}
Let $G$ be a finite Abelian group, $X$ be a graph on $\{1,\ldots,v\}$ and $\{g_1,\ldots,g_v\}\subseteq G$ be a set of elements whose 3-sums are distinct. Let $Y$ be constructed from $X$ as above. Then $\omega(X)=\omega(Y)$.
\end{lemma}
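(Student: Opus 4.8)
The plan is to prove the two inequalities $\omega(X)\le\omega(Y)$ and $\omega(Y)\le\omega(X)$ separately. The first is immediate: writing the group additively, the hypothesis that the $2$-sums of $\{g_1,\dots,g_v\}$ are distinct (which follows from $3$-sum distinctness) makes $\{g_1,\dots,g_v\}$ a Sidon set of the second kind, so by the discussion of Section~\ref{Embeddings} the map $i\mapsto g_i$ embeds $X$ as an induced subgraph of $Y$; a largest clique of $X$ therefore survives in $Y$.

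For the reverse inequality the engine of the whole argument is a uniqueness statement: \emph{every element of $\mathcal{C}$ has a unique ordered representation} $g_i-g_j$ with $\{i,j\}\in E(X)$. To see this I would start from a coincidence $g_i-g_j=g_k-g_l$, rewrite it as the $2$-sum identity $g_i+g_l=g_k+g_j$, and then \emph{pad} it with an arbitrary third generator $g_m$ to get the $3$-sum identity $g_i+g_l+g_m=g_k+g_j+g_m$. Distinctness of $3$-sums forces $\{i,l\}=\{k,j\}$ as multisets, and since $i\ne j$ and $k\ne l$ this leaves only $(i,j)=(k,l)$. The same padding trick applied to $2g_i=2g_j$ shows that no element of $\mathcal{C}$ has order two, so the orientation of each representation is genuinely well defined. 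This is exactly where using $3$-sums rather than $2$-sums pays off, and I expect getting these uniqueness and no-involution facts straight to be the crux that makes everything else routine.

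Now I take a clique $K$ in $Y$ and, using vertex-transitivity, translate it so that $0\in K$. Each nonzero $a\in K$ lies in $\mathcal{C}$, so it has a unique ordered representation $a=g_{i_a}-g_{j_a}$; call $i_a$ its \emph{head} and $j_a$ its \emph{tail}. For two nonzero members $a,b$, the fact that $a-b\in\mathcal{C}$ gives a third representation, and expanding $a-b$ in terms of the $g$'s yields a $3$-sum identity; distinctness then forces either $i_a=i_b$ (a common head) or $j_a=j_b$ (a common tail), and rules out any head-meets-tail coincidence. Fixing one member $a_1$, I would then show that mixing is impossible: if one member shared a head with $a_1$ while another shared a tail with $a_1$, the representation of their difference would collapse one of them onto $a_1$, a contradiction. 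Hence all nonzero members of $K$ share a single common head $w$, or all share a single common tail $w$.

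Finally I would read off the clique in $X$. In the common-tail case, define $\varphi(0)=w$ and $\varphi(a)=i_a$; the uniqueness statement makes $\varphi$ injective, the representations $a=g_{i_a}-g_w$ show each $\{w,i_a\}$ is an edge, and for two members $a-b=g_{i_a}-g_{i_b}\in\mathcal{C}$ shows $\{i_a,i_b\}$ is an edge. Thus $\varphi(K)$ is a clique of $X$ with $|\varphi(K)|=|K|$, giving $\omega(X)\ge\omega(Y)$; the common-head case is identical after applying the automorphism $x\mapsto -x$ of $Y$. The main obstacle, as noted, is organizing the representation-uniqueness lemma and the head/tail dichotomy so that the global ``common vertex'' conclusion drops out cleanly.
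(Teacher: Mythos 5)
Your proof is correct, and its first half --- the embedding direction, translating the clique so that it contains $0$, and applying $3$-sum distinctness to the adjacency equation $(g_a-g_b)+(g_e-g_f)=g_c-g_d$ to force a shared index between adjacent connection-set elements --- is the same as the paper's. The two arguments part ways in how the global ``all edges share one vertex'' conclusion is then extracted. The paper stays with unordered edges: it shows that every triangle of $Y$ through $0_G$ comes from a triangle of $X$, that every triangle $abc$ of $X$ produces an \emph{induced $6$-cycle} on $\pm(g_a-g_b),\pm(g_b-g_c),\pm(g_a-g_c)$ in $Y[0_G]$, and then observes that three mutually adjacent clique elements whose underlying edges formed a triangle with no common vertex would constitute a triangle inside that induced $6$-cycle, which is absurd. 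You instead pin down a unique \emph{ordered} representation of each element of $\mathcal{C}$ (your padding argument is exactly how one proves the paper's remark that $3$-sum distinctness implies $2$-sum distinctness), derive the head/tail dichotomy for each adjacent pair, and kill mixing by a short transitivity argument on a fixed element $a_1$. The payoff is that you avoid the $6$-cycle construction and its sign case-analysis entirely, at the cost of carrying orientations throughout; the paper's version, in exchange, records the $6$-cycle structure of $Y[0_G]$, which it reuses in Section \ref{Structure} to show that $G(X)[{\bf 0}]$ is a $2$-fold cover of the triangle graph $T(X)$. Both versions yield the same explicit clique correspondence between $X$ and $Y$ that the later complexity reductions rely on.
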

\begin{proof}
Since the 3-sums of elements of $\{g_1,\ldots,g_v\}$ are distinct, $X$ embeds in $Y$ and $\omega(Y)\geq\omega(X)$. So it suffices to show that if $T$ is a clique in $Y$, then we can find a clique $S$ in $X$ with $|S|=|T|$. Since $Y$ is vertex transitive, we assume that $0_G\in T$ (so $T\subseteq\mathcal{C}\cup\{0_G\}$).

Consider $g_a-g_b$ adjacent to $g_c-g_d$ in the neighbourhood of $0_G$. Since these vertices are adjacent, there is some $g_e-g_f\in\mathcal{C}$ so that
\[
(g_a-g_b)+(g_e-g_f)=g_c-g_d.
\]
Rearranging we see that
\[
g_a+g_e+g_d=g_c+g_b+g_f,
\]
and since the 3-sums of the $g_i$ are distinct, $\{a,d,e\}=\{b,c,f\}$ as multisets. We also have that $a\neq b$, $c\neq d$ and $e\neq f$, as we started with vertices in $\mathcal{C}$. So there are two possibilities: either $(a,d,e)=(c,f,b)$, or $(a,d,e)=(f,b,c)$. In the first case we have that $g_a-g_b$ is adjacent to $g_a-g_d$ by $g_b-g_d\in\mathcal{C}$. We see that $abd$ forms a triangle in $X$. Likewise, the second case gives triangle $abc$ in $X$. So every triangle in $Y$ containing $0_G$ corresponds to a triangle in $X$.

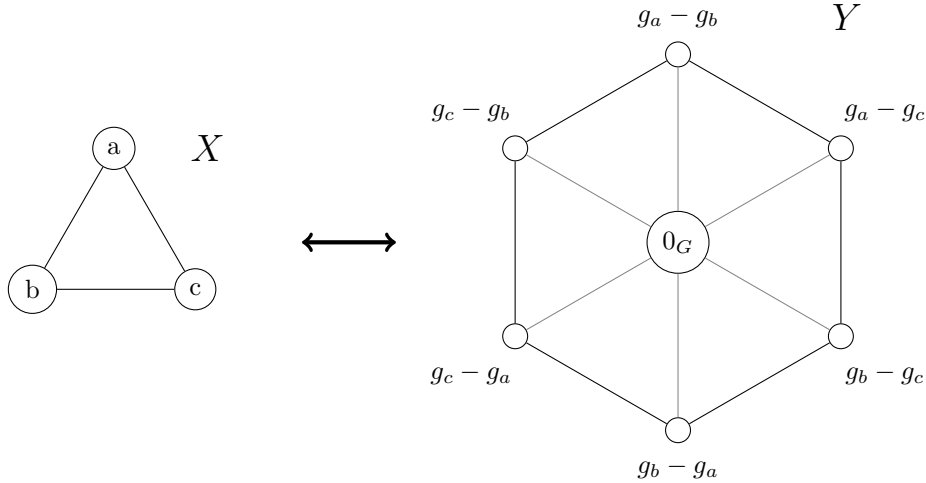
\begin{figure}[h]\label{figure}
\caption{Triangles in $X$ correspond to induced 6-cycles in $Y[0_G]$.}
\centering{
\begin{tikzpicture}[scale=2.5]
\node (label1) at (-2.5,0.5) [circle] {\Large{$X$}};
\node (x1) at (-3,0.5) [circle,draw] {a};
\node (x2) at (-3.433,-0.25) [circle,draw] {b};
\node (x3) at (-2.567,-0.25) [circle,draw] {c};
\draw (x1) edge (x2);
\draw (x2) edge (x3);
\draw (x3) edge (x1);
\draw [ultra thick,<->] (-2,0) -- (-1.5,0);
\node (label) at (0.9,1.2) [circle] {\Large{$Y$}};
\node (0) at (0,0) [circle,draw] {$0_G$};
\node (6) at (-0.866,0.5) [circle,draw] {};
\node (5) at (-0.866,-0.5) [circle,draw] {};
\node (4) at (0,-1) [circle,draw] {};
\node (3) at (0.866,-0.5) [circle,draw] {};
\node (2) at (0.866,0.5) [circle,draw] {};
\node (1) at (0,1) [circle,draw] {};
\draw (1) edge (2);
\draw (2) edge (3);
\draw (3) edge (4);
\draw (4) edge (5);
\draw (5) edge (6);
\draw (6) edge (1);
\draw (0) edge[gray,very thin] (1);
\draw (0) edge[gray,very thin] (2);
\draw (0) edge[gray,very thin] (3);
\draw (0) edge[gray,very thin] (4);
\draw (0) edge[gray,very thin] (5);
\draw (0) edge[gray,very thin] (6);
\node (l6) at (-1.1,0.7) [circle] {$g_c-g_b$};
\node (l5) at (-1.1,-0.7) [circle] {$g_c-g_a$};
\node (l4) at (0,-1.2) [circle] {$g_b-g_a$};
\node (l3) at (1.1,-0.7) [circle] {$g_b-g_c$};
\node (l2) at (1.1,0.7) [circle] {$g_a-g_c$};
\node (l1) at (0,1.2) [circle] {$g_a-g_b$};
\end{tikzpicture}
}
\end{figure}

Now let $abc$ be a triangle in $X$. The edges $ab,bc,ac$ give six distinct vertices
\[
\pm(g_a-g_b),\ \ \pm(g_b-g_c),\ \ \pm(g_a-g_c)
\]
in the neighbourhood of $0_G$ (as the 2-sums are distinct). Each edge between these vertices corresponds to an assignment of signs that makes the equation
\[
\pm(g_a-g_b)\pm(g_b-g_c)=\pm(g_a-g_c)
\]
valid. This follows as any connection set element involving $g_i\notin\{g_a,g_b,g_c\}$ cannot connect two of the six vertices by our initial argument. Every valid assignment of signs to the above equation will correspond to three edges, by interpreting each pair of $\pm(g_a-g_b),\pm(g_b-g_c)$ and $\pm(g_a=g_c)$ as the vertices.

For example, consider the equation
\[
(g_a-g_b)+(g_b-g_c)=-(g_a-g_c).
\]
Rearranging the terms, we have that $2g_a=2g_c$, which implies that $a=c$ (as the 2-sums are distinct). This contradicts the fact that we started with a triangle $abc$.

By considering every possible assignment of signs, we see that the only valid assignments are
\[
(g_a-g_b)+(g_b-g_c) = (g_a-g_c)
\]
and
\[
-(g_a-g_b)-(g_b-g_c) = -(g_a-g_c).
\]
These correspond to the edges
\[
\{g_a-g_b,g_a-g_c\},\ \{g_b-g_c,g_a-g_c\},\ \{g_a-g_b,g_c-g_b\}
\]
and
\[
\{g_b-g_a,g_c-g_a\},\ \{g_c-g_b,g_c-g_a\},\ \{g_b-g_a,g_b-g_c\}
\]
respectively. These six vertices and six edges form an induced 6-cycle in the neighbourhood of $0_G$.

Finally, consider the elements of $T$. Suppose $t_a,t_b\in T\setminus\{0_G\}$ where $a,b\in E(X)$ are the edges corresponding to $t_a,t_b$ respectively. Since $0_G,t_a,t_b$ is a triangle in $Y$ containing $0_G$, we have that $a$ and $b$ are edges in a triangle in $X$. Let $p\in V(X)$ be the vertex incident with both $a$ and $b$. By the same argument, for $t_b,t_c\in T\setminus\{0_G\}$ there is a vertex $q$ incident to both $b$ and $c$. Suppose $q\neq p$ (so $q$ is the other endpoint of $b$). Since $a$ and $c$ are also incident with some vertex of $X$, we have that $a,b,c$ are the edges of a triangle in $X$. Thus the vertices of $T\setminus\{0_G\}$ corresponding the edges of the triangle containing $a,b,c$ are part of an induced $6$-cycle in $Y$. But $t_a,t_b,t_c$ form a triangle in $Y$, a contradiction. Therefore every $t_a\in T\setminus\{0_G\}$ corresponds to an edge $a$ in $X$ incident with $p$.

Without loss of generality each $t_a\in T\setminus\{0_G\}$ has the form $t_a=g_p-g_x$ where $a=\{p,x\}\in E(X)$. Let $S=\{p\}\cup\{x\,:\,g_p-g_x\in T\}$. Then $S$ is a clique in $X$ with $|S|=|T|$.\qed
\end{proof}

Note that if $S$ is a clique in $X$ and $i\in S$, then $T=\{g_j-g_i\,:\,j\in S\}$ is a clique in $Y$ containing $0_G$ with $|T|=|S|$. So Lemma \ref{AbGroup} gives a method for constructing cliques in $X$ from cliques in $Y$, and vice versa. 

Free Cayley graphs for finite Abelian groups with exponent at least four are examples of graphs to which we can apply Lemma \ref{AbGroup} directly.
\begin{proposition}\label{3sum}
Let $G$ be a finite Abelian group with exponent $m\geq 4$, and $\{g_1,\ldots,g_v\}$ be the canonical generators for $\mathcal{F}_v(G)$. Then the 3-sums of the elements of $\{g_1,\ldots,g_v\}$ are all distinct.
\end{proposition}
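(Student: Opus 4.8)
The plan is to follow the strategy of Proposition \ref{SidonSet}, exploiting the defining property of the canonical generators that \emph{every} ordered $v$-tuple of elements of $G$ occurs as some coordinate $(g_1[a],\ldots,g_v[a])$. I would argue the contrapositive: suppose $g_i+g_j+g_k=g_l+g_m+g_n$ while the multisets $\{i,j,k\}$ and $\{l,m,n\}$ differ. Then there is an index $t\in\{1,\ldots,v\}$ whose multiplicity $c$ among $i,j,k$ differs from its multiplicity $c'$ among $l,m,n$. Since there are only three summands on each side, $c,c'\in\{0,1,2,3\}$, so the difference $d=c-c'$ satisfies $0<|d|\le 3$.

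Next I would evaluate the hypothesised identity at a single well-chosen coordinate. Fix $g\in G$ (to be specified) and let $a$ be the coordinate at which $g_t[a]=g$ and $g_s[a]=0_G$ for every $s\neq t$; such a coordinate exists precisely because the canonical generators realise every $v$-tuple. Reading the equation off in the $a$th coordinate, every summand not equal to $g_t$ contributes $0_G$, and the surviving terms collapse to
\[
c\,g=c'\,g,
\]
that is, $d\,g=0_G$. It remains to choose $g$ so that $d\,g\neq 0_G$, and this is exactly where the hypothesis $m\ge 4$ enters and is the only real point of the argument. If no such $g$ existed, then $d\,g=0_G$ for all $g\in G$, forcing the exponent $m$ to divide $d$; but $0<|d|\le 3<4\le m$, so $m\nmid d$, a contradiction. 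Hence some $g\in G$ gives $d\,g\neq 0_G$, and evaluating at the corresponding coordinate $a$ makes the original identity fail, contradicting our assumption. Therefore the multisets coincide and the $3$-sums are distinct.

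I expect no serious obstacle beyond this numerical observation. Alternatively, one may phrase the entire argument through Lemma \ref{FreeGroupA}: identifying $\mathcal{F}_v(G)$ with $\ints_m^v$ and $g_i$ with the standard vector $e_i$, the equation becomes $e_i+e_j+e_k=e_l+e_m+e_n$ in $\ints_m^v$, whose $t$th coordinate records the multiplicity of $t$ on each side modulo $m$. Since all multiplicities lie in $\{0,1,2,3\}$ and $m\ge 4$, each such congruence upgrades to a genuine equality, so the multisets agree. This reformulation also makes transparent the sharpness of the threshold: for exponent at most $3$ one has $3g_i=0_G$ for every $i$, so distinct single-letter $3$-sums $g_i+g_i+g_i$ and $g_j+g_j+g_j$ already collide, matching the remark preceding the statement.
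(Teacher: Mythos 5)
Your argument is correct and is essentially the paper's own proof: both evaluate the hypothesised identity at the coordinate realising the $v$-tuple that is $0_G$ everywhere except at the index $t$, reduce to $(c-c')g=0_G$ with $|c-c'|\le 3$, and use $m\ge 4$ to rule this out. The only cosmetic difference is that the paper fixes $g$ of order at least $4$ up front rather than deriving its existence by contradiction from the exponent.
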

\begin{proof}
Suppose $g_a+g_b+g_c=g_d+g_e+g_f$. Let $g\in G$ be an element of order at least $4$. Since every $v$-tuple of elements of $G$ appears as some $(g_1[i],\ldots,g_v[i])$, there is some $i$ so that $g_a[i]=g$, and $g_j[i]=0_G$ for all $j\neq a$. This implies that $a$ appears the same number of times in each of the multi-sets $\{a,b,c\}$ and $\{d,e,f\}$. Repeating this argument for $b$ and $c$ shows that the multisets are equal.\qed
\end{proof}
\noindent Since distinctness of 3-sums implies distinctness of 2-sums, $|\mathcal{C}|=2|E(X)|$, as $g_i-g_j$ is distinct for each arc $ij$ of $X$. Moreover, ${\bf 0}\notin\mathcal{C}$, so $G(X)$ is loopless. Using Lemma \ref{AbGroup} we immediately have the following corollary.
\begin{corollary}\label{Exp4}
If $G$ is a finite Abelian group with exponent $m\geq 4$, then $\omega(X)=\omega(G(X))$.\qed
\end{corollary}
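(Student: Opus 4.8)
The plan is to recognise Corollary \ref{Exp4} as a direct specialisation of Lemma \ref{AbGroup}, so that the entire proof reduces to checking that the free Cayley graph $G(X)$ fits the hypotheses of that lemma. First I would observe that $G(X) = X(\mathcal{F}_v, \mathcal{C})$ is exactly the graph $Y$ produced by the embedding construction of Section \ref{Embeddings}, taken with the ambient group $\mathcal{F}_v = \mathcal{F}_v(G)$ and the assignment $h(i) = g_i$ sending each vertex $i$ of $X$ to the corresponding canonical generator. This identification is purely definitional, since the connection set $\mathcal{C} = \{g_i g_j^{-1} : i,j \text{ adjacent in } X\}$ is the same in both settings.

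Next I would verify the two standing hypotheses of Lemma \ref{AbGroup}: that the ambient group is finite Abelian, and that the chosen set of group elements has distinct 3-sums. The first holds because $G$ is finite Abelian with exponent $m$, so by Lemma \ref{FreeGroupA} we have $\mathcal{F}_v(G) \cong \ints_m^v$, which is itself finite and Abelian. The second is precisely the content of Proposition \ref{3sum}: the canonical generators $\{g_1, \ldots, g_v\}$ have all distinct 3-sums, and this is exactly the place where the hypothesis $m \geq 4$ is consumed, since 3-sum-distinct sets with more than one element cannot exist in groups of exponent at most three.

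With both hypotheses in hand, Lemma \ref{AbGroup} applied to the finite Abelian group $\mathcal{F}_v$ and the 3-sum-distinct set $\{g_1, \ldots, g_v\}$ yields $\omega(X) = \omega(Y) = \omega(G(X))$, which is the assertion.

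Because the corollary is a direct consequence, I do not expect any genuine obstacle. The only points requiring care are bookkeeping ones: confirming that $\mathcal{F}_v$ really satisfies the \emph{finite Abelian} hypothesis of Lemma \ref{AbGroup}—rather than being a free group in the ordinary, infinite sense—which Lemma \ref{FreeGroupA} settles via the isomorphism with $\ints_m^v$, and tracking that the exponent condition $m \geq 4$ is exactly what feeds Proposition \ref{3sum} so that the distinct-3-sums premise is available.
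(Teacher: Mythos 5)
Your proposal is correct and matches the paper exactly: the paper derives Corollary \ref{Exp4} as an immediate consequence of Lemma \ref{AbGroup} applied to the finite Abelian group $\mathcal{F}_v(G)\cong\ints_m^v$ (via Lemma \ref{FreeGroupA}) together with Proposition \ref{3sum}, which supplies the distinct-3-sums hypothesis from the assumption $m\geq 4$. Your additional bookkeeping remarks only make explicit what the paper leaves implicit.
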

\noindent Again, from Lemma \ref{AbGroup}, we also have a method for constructing cliques in $X$ from cliques in $G(X)$, and vice versa.

\section{The Groups $\ints_p$}\label{Zp}

Towards proving our main result, we consider the groups $G=\ints_p$ for $p$ prime. We start by showing that $\omega(X)$ is easily derived from $\omega(\ints_p(X))$. If $p\geq 5$, then $\ints_p$ has exponent $p>4$, and from Corollary \ref{Exp4} we have that $\omega(X)=\omega(\ints_p(X))$. The cases $p=2$ and $p=3$ require more care. Our arguments from Section \ref{CliquePrelim} do not apply directly, but the picture is similar. We begin with $p=2$.

Let $X$ be a graph, $G=\ints_2$ and $G(X)$ be the free Cayley graph. Note that in general, the $3$-sums of the canonical generators $g_i$ will not be distinct. For instance, $2g_i=2g_j$ for any $i$ and $j$. However, if we add the restriction that the summands are distinct, then the 3-sums are distinct.
\begin{proposition}\label{Z2Sums}
If $p=2$, the $2$-sums $g_i+g_j$ for distinct $1\leq i,j\leq v$ are distinct. The $3$-sums $g_i+g_j+g_k$ where $|\{i,j,k\}|=1,3$ are distinct.
\end{proposition}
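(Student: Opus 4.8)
The plan is to pass through the isomorphism of Lemma \ref{FreeGroupA} and then reduce everything to a statement about supports of $0/1$-vectors. Since $\ints_2$ has exponent $m=2$, Lemma \ref{FreeGroupA} gives $\mathcal{F}_v(\ints_2)\cong\ints_2^v$ with each canonical generator $g_i$ sent to the standard basis vector $e_i$. Under this identification every sum of generators becomes a sum of $e_i$'s in $\ints_2^v$, and distinctness of sums is equivalent to distinctness of the resulting vectors. The whole argument then comes down to reading off the support $\supp(\,\cdot\,)\subseteq\{1,\ldots,v\}$ of each sum.

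For the first claim I would observe that for distinct $i\neq j$ the $2$-sum corresponds to $e_i+e_j$, which has support exactly $\{i,j\}$ (no cancellation occurs because the two basis vectors are distinct). Since a vector determines its support, two $2$-sums $e_i+e_j$ and $e_k+e_l$ with $i\neq j$ and $k\neq l$ coincide only if $\{i,j\}=\{k,l\}$. Hence distinct pairs yield distinct $2$-sums.

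For the second claim I would split on $|\{i,j,k\}|$. When $|\{i,j,k\}|=3$ the sum is $e_i+e_j+e_k$ with support $\{i,j,k\}$ of size $3$; when $|\{i,j,k\}|=1$, i.e. $i=j=k$, working in characteristic $2$ gives $3e_i=e_i$, a vector of support size $1$. The two families therefore have supports of different sizes and can never coincide, while within each family the support again recovers the underlying index set, so the sums are pairwise distinct.

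The argument is essentially a bookkeeping exercise rather than a genuine obstacle; the one point that requires care is the characteristic-$2$ arithmetic, and in particular why the case $|\{i,j,k\}|=2$ must be excluded from the statement. If $i=j\neq k$ then $g_i+g_j+g_k=2e_i+e_k=e_k$, which collapses to a single generator and collides both with the $|\{i,j,k\}|=1$ sums and with one another (every $2e_i+e_k$ gives $e_k$ regardless of $i$). Noting this collapse is what pins down exactly which $3$-sums can be claimed distinct, and it is the only subtlety in the proof.
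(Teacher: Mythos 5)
Your proof is correct, but it takes a different route from the paper's. The paper argues directly inside the concrete realization $\mathcal{F}_v(\ints_2)\leq \ints_2^{2^v}$: given a purported collision of sums, it selects a coordinate $x$ at which exactly one of the relevant generators is $1$ and the rest are $0$, and reads off a contradiction; for the $3$-sums this is organized into three cases according to the sizes $|\{i,j,k\}|$ and $|\{a,b,c\}|$ (the mixed case $3$ versus $1$ is reduced to the $2$-sum statement). You instead invoke the isomorphism $\mathcal{F}_v(\ints_2)\cong\ints_2^v$ from Lemma~\ref{FreeGroupA}, which sends $g_i$ to $e_i$, and then everything becomes support bookkeeping for $0/1$-vectors: supports of size $2$ identify the pair, and the size-$1$ versus size-$3$ dichotomy of supports separates the two families of $3$-sums. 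The two arguments rest on the same underlying fact --- the coordinate-selection trick in the paper is precisely what establishes the uniqueness of coordinates in the proof of Lemma~\ref{FreeGroupA} --- so yours is essentially a factorization of the paper's argument through that lemma. What you gain is the elimination of the case analysis and a transparent explanation of why $|\{i,j,k\}|=2$ must be excluded (the collapse $2e_i+e_k=e_k$), which the paper only mentions in passing beforehand; what the paper's version buys is a self-contained argument whose template carries over verbatim to Proposition~\ref{3sum} and Proposition~\ref{Z3Sums}, where the target group no longer has exponent $2$ and the support of $\sum\alpha_ie_i$ alone would not suffice (one must also track the coefficients $\alpha_i$ modulo $p$, though your method extends with that small addition).
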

\begin{proof}
Suppose that $1\leq i,j,k,l\leq v$ are such that $i\neq j$ and $k\neq l$, and
\[
g_i+g_j=g_k+g_l.
\]
If we suppose that $\{i,j\}\neq \{k,l\}$, then we can assume that $l\notin\{i,j,k\}$. Then there is some $1\leq x\leq 2^v$ such that $g_i[x]=g_j[x]=g_k[x]=0$, and $g_l[x]=1$. This is a contradiction, so we conclude that $\{i,j\}=\{k,l\}$.

Now suppose we have $1\leq i,j,k\leq v$ and $1\leq a,b,c\leq v$ such that
\[
g_i+g_j+g_k=g_a+g_b+g_c.
\]
We consider three cases. First, if $|\{i,j,k\}|=|\{a,b,c\}|=1$, then $3g_i=3g_a$ and $g_i=g_a$. So $\{i,j,k\}=\{a,b,c\}$.

Second, let $|\{i,j,k\}|=3$ and $|\{a,b,c\}|=1$. Then we have
\[
g_i+g_j+g_k=3g_a,
\]
which implies
\[
g_i+g_j=g_a+g_k.
\]
Now by our initial argument, $\{i,j\}=\{a,k\}$. But this implies that $|\{i,j,k\}|<3$, a contradiction.

Finally, let $|\{i,j,k\}|=|\{a,b,c\}|=3$. Suppose that $\{i,j,k\}\neq\{a,b,c\}$. This implies that, without loss of generality, $c\notin\{i,j,k\}$. So there is some $1\leq x\leq 2^v$ such that $g_i[x]=g_j[x]=g_k[x]=g_a[x]=g_b[x]=0$ and $g_c[x]=1$. Thus we have a contradiction.\qed
\end{proof}

Since each $g_i$ has order $2$ in $\mathcal{F}_v(\ints_2)$, $g_i=-g_i$ for all $1\leq i\leq v$. Also
\[
g_i-g_j=g_i+g_j=-g_i+g_j,
\]
and our connection set $\mathcal{C}$ is
\[
\mathcal{C}=\{g_i+g_j\,:\,\text{$i,j$ adjacent in $X$}\}.
\]
So $\ints_2(X)$ is $|E(X)|$-regular (as opposed to $2|E(X)|$-regular). We show that the properties in Proposition \ref{Z2Sums} are enough to guarantee that $\omega(\ints_2(X))$ and $\omega(X)$ are closely related.

Unlike the case where the exponent of $G$ is at least four, we will not be able to conclude that $\omega(\ints_2(X))=\omega(X)$ for all graphs $X$. Since $\ints_2(X)$ is a cubelike graph, $\omega(\ints_2(X))\neq 3$ (in a cubelike graph, every triangle is contained in a copy of $K_4$). Thus if $\omega(X)=3$, we won't have $\omega(\ints_2(X))=\omega(X)$. However, we can show that this is the only problematic case for $p=2$. We start with a simple observation.

\begin{proposition}\label{CommonGen}
Let $k\geq 4$. Suppose ${\bf 0},h_1,\ldots,h_k$ is a clique in $\ints_2(X)$ where $h_i=g_i^{(1)}+g_i^{(2)}$. Then there is some $g\in\{g_i\,:\,1\leq i\leq v\}$ so that for each $1\leq i \leq k$, there is $j\in\{1,2\}$ with $g=g_i^{(j)}$.
\end{proposition}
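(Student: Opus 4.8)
The plan is to pass to the coordinate description of $\ints_2(X)$ supplied by Lemma \ref{FreeGroupA}, reduce the clique hypothesis to a purely combinatorial statement about edges of $X$, and then finish with a short Helly-type dichotomy for pairwise-intersecting edges, where the hypothesis $k\geq 4$ does the real work.

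First I would use Lemma \ref{FreeGroupA} to identify $\mathcal{F}_v(\ints_2)$ with $\ints_2^v$ and each canonical generator $g_i$ with the standard basis vector $e_i$. Under this identification the connection set $\mathcal{C}$ is exactly the set of weight-$2$ vectors $e_a+e_b$ with $\{a,b\}\in E(X)$, and (using adjacency to $\mathbf{0}$) each clique element $h_i=g_i^{(1)}+g_i^{(2)}$ becomes the characteristic vector of an edge $f_i=\{a_i,b_i\}$ of $X$, with $a_i\neq b_i$. Since the $h_i$ are distinct vertices of a clique, the edges $f_1,\ldots,f_k$ are pairwise distinct.

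Next I would translate adjacency into an intersection condition. Because $h_i$ and $h_j$ are adjacent, $h_i+h_j\in\mathcal{C}$, so $h_i+h_j$ has weight exactly $2$. But $h_i+h_j=e_{a_i}+e_{b_i}+e_{a_j}+e_{b_j}$ has weight equal to $|f_i\triangle f_j|$, the size of the symmetric difference, and for distinct edges this equals $2$ precisely when $|f_i\cap f_j|=1$. Hence the edges $f_1,\ldots,f_k$ pairwise share exactly one vertex. Note that only the weight of $h_i+h_j$ is used here, not the fact that it is a genuine edge vector of $X$, so Proposition \ref{Z2Sums} is not even needed at this stage.

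The remaining step is the combinatorial core, and it is where $k\geq 4$ is essential. I would prove that a family of $k\geq 4$ pairwise-intersecting edges has a common vertex. Fix $f_1=\{p,q\}$; every other $f_i$ meets $f_1$ and so contains $p$ or $q$. If some edge contained $p$ but not $q$ while another contained $q$ but not $p$, then a direct inspection of their intersection forces these two edges, together with $f_1$, to be the three edges of a triangle on vertices $\{p,q,s\}$. But then any fourth edge meeting all three of these must coincide with one of them, contradicting both distinctness and $k\geq 4$. Therefore all edges contain $p$, or all contain $q$, giving a common vertex; the generator indexing that vertex is the desired $g$. The only delicate point is ruling out the triangle configuration, and this is precisely the obstruction that forces the hypothesis $k\geq 4$ rather than $k\geq 3$ (a triangle being exactly the $k=3$ counterexample that will need to be handled separately in the next result).
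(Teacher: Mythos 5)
Your proof is correct. It has the same two-step skeleton as the paper's argument --- first show that any two of the $h_i$ share exactly one generator, then show that pairwise sharing plus $k\geq 4$ forces a common generator --- but the first step is executed by a genuinely different and lighter mechanism. The paper derives the one-common-generator property by writing the adjacency relation as a $3$-sum identity and invoking Proposition~\ref{Z2Sums}, with some care about which generators can be assumed distinct; you instead pass to $\ints_2^v$, observe that adjacency means $h_i+h_j\in\mathcal{C}$ is a weight-$2$ vector, and read off $\lvert f_i\triangle f_j\rvert=2$, i.e.\ $\lvert f_i\cap f_j\rvert=1$, directly. Your observation that only the weight of $h_i+h_j$ matters (not that it is an actual edge vector) is accurate and makes this half self-contained, at the cost of detouring through the coordinate identification of Lemma~\ref{FreeGroupA} rather than staying in the generator language used by the surrounding results. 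For the second step, your star-versus-triangle dichotomy for pairwise-intersecting edges and the paper's $S_1/S_2$ counting argument are interchangeable; yours has the mild advantage of making explicit that the unique obstruction at $k=3$ is the triangle configuration, which is exactly the exceptional case that resurfaces in Lemma~\ref{Z2Cliques}.
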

\begin{proof}
Since $h_i$ and $h_j$ are adjacent for each $1\leq i,j\leq k$, we have
\[
g_i^{(1)}+g_i^{(2)}+g_s+g_t=g_j^{(1)}+g_j^{(2)}
\]
for some $g_s+g_t\in \mathcal{C}$. Note that $g_i^{(1)}\neq g_i^{(2)}$, $g_j^{(1)}\neq g_j^{(2)}$ and $g_s\neq g_t$. Also, $\{g_i^{(1)},g_i^{(2)}\}\neq\{g_s,g_t\}$ and $\{g_j^{(1)},g_j^{(2)}\}\neq\{g_s,g_t\}$. Thus we can assume that $g_s\notin\{g_i^{(1)},g_i^{(2)}\}$ and $g_t\notin\{g_j^{(1)},g_j^{(2)}\}$. Rearranging,
\[
g_i^{(1)}+g_i^{(2)}+g_s=g_j^{(1)}+g_j^{(2)}+g_t,
\]
and from Proposition \ref{Z2Sums} we see that we must have either
\[
g_i^{(1)}\in\{g_j^{(1)},g_j^{(2)}\}\ \ \text{or}\ \ g_i^{(2)}\in\{g_j^{(1)},g_j^{(2)}\}.
\]
Thus
\[
|\{g_i^{(1)},g_i^{(2)}\}\cap\{g_j^{(1)},g_j^{(2)}\}|=1
\]
for all $h_i$ and $h_j$.

Suppose that no such element $g$ exists. Consider $h_1=g_1^{(1)}+g_1^{(2)}$. For each $h_i\neq h_1$ we have that either
\[
g_1^{(1)}\in\{g_i^{(1)},g_i^{(2)}\}\ \ \text{or}\ \ g_1^{(2)}\in\{g_i^{(1)},g_i^{(2)}\}.
\]
Let $S_1$ be the subset of $\{h_2,\ldots,h_k\}$ so that each $h_i\in S_1$ is of the form $g_1^{(1)}+g_i$, and let $S_2$ be defined analogously. Since $k\geq 4$, one of $S_1,S_2$ is not a singleton. Without loss of generality let $|S_1|>1$.

Since $g$ does not exist, $S_2$ is non-empty. We have $h_i,h_j\in S_1$ and $h_a\in S_2$. So
\[
h_i=g_1^{(1)}+g_i,\ \ h_j=g_1^{(1)}+g_j,\ \ \text{and}\ \ h_a=g_1^{(2)}+g_a
\]
where $g_a\neq g_1^{(1)}$ and $g_i,g_j\neq g_1^{(2)}$. Thus only one of $h_i$ and $h_j$ can be adjacent to $h_a$, a contradiction.\qed
\end{proof}

\begin{lemma}\label{Z2Cliques}
If $\omega(X)=3$, then $\omega(\ints_2(X))=4$. Otherwise, $\omega(\ints_2(X))=\omega(X)$.
\end{lemma}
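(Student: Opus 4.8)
The plan is to prove both inequalities relating $\omega(\ints_2(X))$ and $\omega(X)$ by organizing the argument according to the value of $\omega(X)$. The easy direction is already in hand: since $\{g_1,\ldots,g_v\}$ is a Sidon set of the second kind (Proposition \ref{SidonSet}), $X$ embeds in $\ints_2(X)$, so $\omega(\ints_2(X))\geq\omega(X)$ in all cases. Thus the work lies entirely in the reverse estimates. Throughout I would pass to the model $\ints_2(X)\cong X(\ints_2^v,\mathcal{C})$ of Lemma \ref{FreeGroupA}, where $\mathcal{C}=\{e_i+e_j : ij\in E(X)\}$ consists precisely of the weight-two vectors whose support is an edge of $X$; this makes the sum conditions transparent. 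By vertex transitivity I may always assume a clique under consideration contains $\mathbf{0}$.

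First I would dispose of the small cases. If $\omega(X)\leq 1$ then $X$ has no edges, so $\mathcal{C}=\emptyset$ and $\ints_2(X)$ is edgeless, giving equality. For $\omega(X)=2$ I would show $\ints_2(X)$ is triangle-free. A triangle through $\mathbf{0}$ has the form $\{\mathbf{0},u,w\}$ with $u=e_a+e_b$, $w=e_c+e_d$ and $u+w\in\mathcal{C}$. But the support of $u+w$ is the symmetric difference $\{a,b\}\triangle\{c,d\}$, which has size two exactly when $\{a,b\}$ and $\{c,d\}$ meet in a single vertex; in that case the edges $\{a,b\}$, $\{c,d\}$ and $\supp(u+w)$ close up into a triangle of $X$. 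Since $\omega(X)=2$ forbids triangles, no such $u,w$ exist, so $\omega(\ints_2(X))\leq 2$, hence equality.

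The central tool for larger cliques is Proposition \ref{CommonGen}. It shows that any clique $\{\mathbf{0},h_1,\ldots,h_k\}$ with $k\geq 4$ has a generator $g=g_p$ lying in every $h_i$, so that $h_i=g_p+g_{x_i}$; the adjacency conditions then read $\{p,x_i\}\in E(X)$ and $\{x_i,x_j\}\in E(X)$, while distinctness of the $2$-sums (Proposition \ref{Z2Sums}) makes the $x_i$ distinct and different from $p$. Hence $\{p,x_1,\ldots,x_k\}$ is a clique of size $k+1$ in $X$. I would package this as: every clique of size $s\geq 5$ in $\ints_2(X)$ yields a clique of size $s$ in $X$.

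It remains to combine these facts. If $\omega(X)=3$ then $X$ contains a triangle, which embeds as a triangle in $\ints_2(X)$; since $\ints_2(X)$ is cubelike, that triangle lies in a $K_4$, so $\omega(\ints_2(X))\geq 4$. A clique of size at least $5$ would, by the packaged statement, force a clique of size at least $5$ in $X$, contradicting $\omega(X)=3$; hence $\omega(\ints_2(X))=4$. If $\omega(X)\geq 4$, then $\omega(\ints_2(X))\geq\omega(X)\geq 4$. When $\omega(\ints_2(X))\geq 5$ the packaged statement gives a clique of that size in $X$, forcing $\omega(X)\geq\omega(\ints_2(X))$ and hence equality; when $\omega(\ints_2(X))=4$ the sandwich $4=\omega(\ints_2(X))\geq\omega(X)\geq 4$ again gives equality. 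I expect the only real subtlety to be this interface at clique-size four: Proposition \ref{CommonGen} requires $k\geq 4$, i.e.\ clique size at least $5$, so the discrepancy created by the cubelike ``$K_4$ around every triangle'' phenomenon must be confined to exactly the case $\omega(X)=3$, with the case $\omega(X)\geq 4$ rescued by the sandwich rather than by any direct structural claim about $4$-cliques.
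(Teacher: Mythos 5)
Your proposal is correct and takes essentially the same route as the paper's proof: the lower bound via the embedding, Proposition~\ref{CommonGen} to pull cliques of size at least five in $\ints_2(X)$ back to equally large cliques in $X$, the triangle-inside-$K_4$ observation to settle $\omega(X)=3$, the sandwich argument at clique size four, and direct arguments for $\omega(X)\leq 2$. The only differences are cosmetic (you use the Sidon-set embedding rather than the explicit clique $T=\{g_i+g_j : j\in S\}$ for the easy direction, and a symmetric-difference computation for the triangle-free case), so there is nothing to add.
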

\begin{proof}
Suppose that $S$ is a clique in $X$. Fix $i\in S$ and let
\[
T=\{g_i+g_j\,:\,j\in S\}.
\]
Then $T$ is a subset of the vertices of $\ints_2(X)$ containing ${\bf 0}$. Since $i$ is adjacent to every other vertex in $S$, we see $g_i+g_j\in \mathcal{C}$ for all $j\in S\setminus\{i\}$. Thus ${\bf 0}$ is adjacent to every $g_i+g_j\in T$ with $i\neq j$. Also, if $g_i+g_j\in T$ and $g_i+g_k\in T$, then since $j,k\in S$ we have that $g_j+g_k\in \mathcal{C}$, and so $g_i+g_j$ is adjacent to $g_i+g_k$. Therefore $T$ is a clique in $\ints_2(X)$ with $|T|=|S|$, and $\omega(X)\leq\omega(\ints_2(X))$.

Now suppose $S$ is a clique in $\ints_2(X)$. Assume that $|S|\geq 5$, and without loss of generality that ${\bf 0}\in S$. By Proposition \ref{CommonGen} there is a vertex $i$ of $X$ so that every element of $S\setminus\{{\bf 0}\}$ is of the form $g_i+g_j$. Thus the vertices
\[
\{i\}\cup\{j\,:\,g_i+g_j\in S\}
\]
form a clique in $X$ of size $|S|$, and $\omega(X)=\omega(\ints_2(X))$.

If $\omega(X)=3$, then for any triangle $\{a,b,c\}$ in $X$, the vertices
\[
\{{\bf 0},g_a+g_b,g_a+g_c,g_b+g_c\}
\]
form a clique in $\ints_2(X)$. Since $\omega(\ints_2(X))\geq 5$ implies $\omega(X)=\omega(\ints_2(X))$, we have $\omega(\ints_2(X))=4$. This also shows that if $\omega(X)=4$, then $\omega(\ints_2(X))=4$.

If $\omega(X)=2$, then $X$ has edges but is triangle-free. Thus the neighbourhood of ${\bf 0}$ in $\ints_2(X)$ contains no edges and so $\omega(\ints_2(X))=2$. The case $\omega(X)=1$ is trivial.\qed
\end{proof}

Finally, we turn to the case where $p=3$. Again, we cannot apply our reasoning from Section \ref{CliquePrelim} directly. If $p=3$, then our groups have exponent three, and the $3$-sums of the generators will not all be distinct. However, we will again be able to show that a large enough subset of the $3$-sums will be distinct.

For the $p=3$ case, our general strategy is the same as for groups with exponent at least four. Our arguments are very similar to the arguments from Section \ref{CliquePrelim} (unlike the $p=2$ case). So we simply present the general strategy, and necessary lemmas, and leave the details to the interested reader (a complete proof can be found in Chapter 3 of \cite{Rooney}).

When $p=3$, the $2$-sums of generators will be distinct. So, given a graph $X$, the graph $\ints_3(X)$ is $2|E(X)|$-regular as usual. In this case, the assignment almost gives distinct $3$-sums. The only problem is the fact that $3g_i={\bf 0}$ for all $i$.

\begin{proposition}\label{Z3Sums}
For $p=3$, if $g_i+g_j+g_k=g_r+g_s+g_t$, then either $\{i,j,k\}=\{r,s,t\}$ as multisets, or $|\{i,j,k\}|=|\{r,s,t\}|=1$.
\end{proposition}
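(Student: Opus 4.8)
The plan is to mimic the coordinate-projection argument from Proposition \ref{3sum}, keeping careful track of multiplicities modulo $3$. By Lemma \ref{FreeGroupA} we may work in $\ints_3^v$ with the canonical generators $g_\ell$ playing the role of the standard basis vectors; concretely, for each index $1\leq\ell\leq v$ there is a coordinate $x_\ell$ with $g_\ell[x_\ell]=1$ and $g_m[x_\ell]=0$ for all $m\neq\ell$, since every $v$-tuple over $\ints_3$ occurs among the coordinates of the canonical generators.

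For the multiset $\{i,j,k\}$, write $a_\ell$ for the number of times $\ell$ occurs in $\{i,j,k\}$, and $b_\ell$ for the number of times it occurs in $\{r,s,t\}$. Evaluating the hypothesised equality $g_i+g_j+g_k=g_r+g_s+g_t$ in coordinate $x_\ell$ yields $a_\ell\equiv b_\ell \pmod 3$ for every $\ell$, since each $g_m[x_\ell]$ contributes $1$ to the sum precisely when $m=\ell$. Note also that $\sum_\ell a_\ell=\sum_\ell b_\ell=3$, and that each $a_\ell,b_\ell\in\{0,1,2,3\}$.

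The remaining step is a short case analysis on the shape of $\{i,j,k\}$, driven entirely by the constraints $a_\ell\equiv b_\ell\pmod 3$ together with the two total-weight conditions. If some $a_\ell\in\{1,2\}$, then $b_\ell=a_\ell$ is forced (these are the only residue-compatible values in the range), and I would argue that exhausting the total weight $3$ on both sides pins down the remaining multiplicities to be $0$, giving $\{i,j,k\}=\{r,s,t\}$ as multisets. The only situation in which the multiplicities are not individually forced is when every nonzero $a_\ell$ equals $3$—that is, $\{i,j,k\}$ is a single repeated index (so $i=j=k$)—because then $a_\ell\equiv 0$ and the projection cannot distinguish $b_\ell=0$ from $b_\ell=3$. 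Here the sum condition forces $\{r,s,t\}$ likewise to be a single repeated index, so $|\{i,j,k\}|=|\{r,s,t\}|=1$, which is precisely the exceptional alternative.

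The heart of the matter, and the only place requiring any care, is this last all-equal case: it is exactly where the relation $3g_\ell=\mathbf{0}$ destroys distinctness, so the proposition cannot be strengthened to unconditional distinctness of $3$-sums. Everything else reduces to bookkeeping with nonnegative integers summing to $3$ under a mod-$3$ congruence, so I expect no genuine obstacle beyond organising the cases cleanly.
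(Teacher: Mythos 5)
Your argument is correct and is essentially the proof the paper intends: the paper omits the details of Proposition~\ref{Z3Sums} (deferring to \cite{Rooney}) but indicates that one should adapt the coordinate-projection argument of Proposition~\ref{3sum}, which is exactly what you do. Projecting onto the coordinate where only $g_\ell$ is nonzero gives $a_\ell\equiv b_\ell\pmod 3$ with multiplicities in $\{0,1,2,3\}$ summing to $3$, and your case analysis correctly isolates the single failure mode $3g_\ell={\bf 0}$, yielding precisely the stated exceptional alternative.
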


From this proposition we can prove an analogue to Lemma \ref{AbGroup}. The proof follows the proof of Lemma \ref{AbGroup} very closely, using Proposition \ref{Z3Sums} in place of 3-sum distinctness.
\begin{lemma}\label{m3CliqueX}
If $\omega(\ints_3(X))\geq 3$ , then $\omega(X)=\omega(\ints_3(X))$.
\end{lemma}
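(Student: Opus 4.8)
The plan is to mirror the proof of Lemma \ref{AbGroup}, using Proposition \ref{Z3Sums} wherever that proof invoked $3$-sum distinctness. The embedding already gives $\omega(\ints_3(X)) \geq \omega(X)$, so the content is the reverse inequality: starting from a maximum clique $T$ of $\ints_3(X)$, which by vertex transitivity I may take to contain $\mathbf{0}$, I want to build a clique of $X$ of size $|T|$. As in Lemma \ref{AbGroup}, the elements of $T \setminus \{\mathbf{0}\}$ lie in $\mathcal{C}$ and form a clique in the neighbourhood $\ints_3(X)[\mathbf{0}]$, so everything reduces to analysing adjacencies between connection-set elements $g_a - g_b$ and $g_c - g_d$.

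First I would redo that adjacency analysis. If $g_a - g_b$ is adjacent to $g_c - g_d$, then $(g_a - g_b) + (g_e - g_f) = g_c - g_d$ for some $g_e - g_f \in \mathcal{C}$, which rearranges to $g_a + g_e + g_d = g_c + g_b + g_f$. Proposition \ref{Z3Sums} splits this into two cases. In the generic case $\{a,e,d\} = \{c,b,f\}$ as multisets, and the situation is word-for-word that of Lemma \ref{AbGroup}: a triangle of $X$ appears, with the associated structure on the six vertices $\pm(g_a-g_b), \pm(g_b-g_c), \pm(g_a-g_c)$. The degenerate case $|\{a,e,d\}| = |\{c,b,f\}| = 1$ forces $g_c - g_d = g_b - g_a = -(g_a - g_b)$, and is the genuinely new feature for $p=3$: since $2x = -x$ in $\ints_3$, every $u \in \mathcal{C}$ is adjacent to $-u$, so each edge of $X$ by itself already produces the triangle $\{\mathbf{0}, u, -u\}$. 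Thus the six vertices of a triangle of $X$ now span not an induced $6$-cycle but $K_{3,3}$ (the $6$-cycle plus its three antipodal diagonals).

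The hard part will be controlling these antipodal edges, which a priori could merge with the generic edges to enlarge cliques. The step I would isolate is the claim that an antipodal pair $\{u, -u\}$ has no common neighbour in $\ints_3(X)[\mathbf{0}]$. The only degenerate neighbour of $u$ is $-u$ itself, and the generic analysis above identifies the generic neighbours of $u = g_a - g_b$ as the vertices $g_a - g_c$ and $g_c - g_b$ (over triangles $abc$), while those of $-u$ are $g_b - g_c$ and $g_c - g_a$; comparing forms shows these lists are disjoint. Hence $\{\mathbf{0}, u, -u\}$ is a maximal clique, and no antipodal pair can sit inside a clique of $\ints_3(X)$ of size four or more.

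With this in hand I would finish exactly as in Lemma \ref{AbGroup}. When $|T| \geq 4$, the clique $T \setminus \{\mathbf{0}\}$ has size at least three, so by the previous step it contains no antipodal pair and all its edges are generic; the common-vertex argument then produces a vertex $p$ of $X$ incident to every edge labelling an element of $T \setminus \{\mathbf{0}\}$, and $S = \{p\} \cup \{x : g_p - g_x \in T\}$ is a clique of $X$ with $|S| = |T|$, giving $\omega(X) = \omega(\ints_3(X))$. When $|T| = 3$ the clique $T \setminus \{\mathbf{0}\}$ is a single edge of the neighbourhood, and the remaining bookkeeping is to check that a maximum clique uses a generic such edge --- which by the generic case yields a triangle of $X$ and hence $\omega(X) = 3 = \omega(\ints_3(X))$ --- as opposed to only a degenerate one; this separation of genuine from degenerate triangles at the threshold is the one point requiring care.
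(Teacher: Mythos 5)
Your strategy is exactly the one the paper intends (the paper omits the proof, saying only that it follows Lemma \ref{AbGroup} with Proposition \ref{Z3Sums} substituted for $3$-sum distinctness), and the substance of your argument is right: the generic adjacency analysis carries over verbatim, the new feature is precisely the degenerate adjacencies $u\sim -u$ for $u\in\mathcal{C}$, and your key claim --- that $u$ and $-u$ have no common neighbour in $\ints_3(X)[{\bf 0}]$, since the generic neighbours of $g_a-g_b$ are of the form $g_a-g_c$ or $g_c-g_b$ while those of $g_b-g_a$ are of the form $g_b-g_c$ or $g_c-g_a$, and $2$-sum distinctness keeps these lists disjoint --- is correct and is exactly what is needed to exclude antipodal pairs from any clique of size at least $4$ through ${\bf 0}$. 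From there the common-vertex argument of Lemma \ref{AbGroup} goes through (the six vertices attached to a triangle of $X$ now span a $K_{3,3}$ rather than a $6$-cycle, but $K_{3,3}$ is still triangle-free, which is all that argument uses).

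The gap is the $|T|=3$ case, which you defer as ``bookkeeping requiring care.'' It is not completable: the check you propose --- that a maximum clique of size $3$ can be taken to use a generic edge --- fails, and the lemma as literally stated is false at the threshold. Take $X=K_2$, or any triangle-free graph with an edge. Then $\{{\bf 0},\,g_1-g_2,\,g_2-g_1\}$ is a (purely degenerate) triangle, so $\omega(\ints_3(X))=3\geq 3$ while $\omega(X)=2$. This is confirmed by the paper's own Lemma \ref{Z3Cliques}, which asserts $\omega(X)=2\Rightarrow\omega(\ints_3(X))=3$ and therefore contradicts the present statement when $\omega(X)=2$; the hypothesis must be read as $\omega(\ints_3(X))\geq 4$ (an off-by-one in the printed statement). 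Under that corrected hypothesis your proof is complete as written: $|T|\geq 4$ forces $|T\setminus\{{\bf 0}\}|\geq 3$, your no-common-neighbour claim then rules out antipodal pairs, every edge of $T\setminus\{{\bf 0}\}$ is generic, and the Lemma \ref{AbGroup} machinery produces a clique of $X$ of size $|T|$. You should state explicitly that you are proving the lemma with threshold $4$, rather than leaving the threshold case open.
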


Lemma \ref{m3CliqueX} also gives the usual correspondence between the cliques of $X$ and the cliques of $\ints_3(X)$. The full relationship between $\omega(X)$ and $\omega(\ints_3(X))$ now follows easily.
\begin{lemma}\label{Z3Cliques}
If $\omega(X)=2$, then $\omega(\ints_3(X))=3$. Otherwise, $\omega(\ints_3(X))=\omega(X)$.
\end{lemma}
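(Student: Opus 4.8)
The plan is to split on the value of $\omega(X)$, treating the triangle-free case as the single anomaly, exactly as $\omega(X)=3$ was the anomaly in the proof of Lemma \ref{Z2Cliques}. I work in the additive model $\ints_3(X)=X(\ints_3^v,\mathcal{C})$ with $\mathcal{C}=\{g_i-g_j:\ i,j\ \text{adjacent in}\ X\}$, and I use the standard clique construction (valid whenever the $2$-sums are distinct, as after Lemma \ref{AbGroup}): if $S$ is a clique of $X$ and $i\in S$, then $T=\{g_j-g_i:\ j\in S\}$ is a clique of the same size in $\ints_3(X)$ containing $\mathbf{0}$, so $\omega(\ints_3(X))\geq\omega(X)$ always, while Lemma \ref{m3CliqueX} supplies the reverse passage once a clique of $\ints_3(X)$ is large enough. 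Two cases are then immediate. If $\omega(X)=1$ then $X$ is edgeless, $\mathcal{C}=\emptyset$, and $\omega(\ints_3(X))=1$. If $\omega(X)\geq 3$, the construction gives $\omega(\ints_3(X))\geq\omega(X)\geq 3$, so Lemma \ref{m3CliqueX} applies and yields $\omega(X)=\omega(\ints_3(X))$, which is the ``otherwise'' conclusion.

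The substance is the case $\omega(X)=2$, where $X$ has an edge but no triangle and I must show $\omega(\ints_3(X))=3$. For the lower bound I exhibit the extra triangle forced by the exponent-three relation: fixing an edge $\{a,b\}$ of $X$, the vertices $\mathbf{0}$, $g_a-g_b$ and $g_b-g_a$ are pairwise adjacent, since $g_a-g_b,\,g_b-g_a\in\mathcal{C}$ and
\[
(g_a-g_b)-(g_b-g_a)=2(g_a-g_b)=-(g_a-g_b)=g_b-g_a\in\mathcal{C}.
\]
These three vertices are distinct (as $a\neq b$ and $2$ is invertible modulo $3$), so $\omega(\ints_3(X))\geq 3$.

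For the matching upper bound I argue that this is the only clique without a counterpart in $X$: a clique of size at least $4$ in $\ints_3(X)$ would descend, via the recovery direction underlying Lemma \ref{m3CliqueX}, to a clique of the same size in $X$, contradicting $\omega(X)=2$; hence $\omega(\ints_3(X))=3$. This upper bound is the step I expect to be the main obstacle. It is precisely where Proposition \ref{Z3Sums} must be pushed to full strength: the only coincidence among the $3$-sums of the generators is $3g_i=3g_j=\mathbf{0}$, and one has to show that inside a clique this coincidence can be used essentially once, so that it produces the triangle above but can never assemble a $K_4$ out of triangle-free data. This is the $p=3$ analogue of Proposition \ref{CommonGen}; once it is in place, the remaining bookkeeping follows the proof of Lemma \ref{AbGroup} line for line, with Proposition \ref{Z3Sums} substituting for genuine $3$-sum distinctness.
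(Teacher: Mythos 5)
Your argument is correct and follows the route the paper itself intends: the paper gives no explicit proof of this lemma, saying only that the relationship ``now follows easily'' from Lemma~\ref{m3CliqueX} and the clique correspondence, and your case split, your lower-bound triangle $\{\mathbf{0},\,g_a-g_b,\,g_b-g_a\}$, and your appeal to the descent of large cliques constitute exactly that derivation. The step you defer --- that a clique of size at least $4$ in $\ints_3(X)$ descends to a clique of equal size in $X$ --- is precisely the content of Lemma~\ref{m3CliqueX}, which the paper also states without proof, so you introduce no gap the paper does not already tolerate. One caveat is worth recording explicitly: your own triangle shows that Lemma~\ref{m3CliqueX} cannot be literally correct with the printed threshold $\omega(\ints_3(X))\geq 3$, since every $X$ with an edge then satisfies $\omega(\ints_3(X))\geq 3$ and the lemma would force $\omega(X)\geq 3$ even for triangle-free $X$; the intended statement must use the threshold $4$ (``every clique of size at least $4$ descends''), which is the form you in fact invoke in the $\omega(X)=2$ case. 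For consistency you should run the $\omega(X)\geq 3$ case through the same corrected statement: when $\omega(X)=3$ the embedding gives $\omega(\ints_3(X))\geq 3$, and if $\omega(\ints_3(X))\geq 4$ the descent would produce a $4$-clique in $X$, a contradiction, so equality holds there as well.
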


Corollary \ref{Exp4} and Lemmas \ref{Z2Cliques} and \ref{Z3Cliques} give the precise relation between $\omega(X)$ and $\omega(\ints_p(X))$ when $p$ is a prime. As in Section \ref{FreeCayleyGraphs}, the proofs also give an efficient method for producing a maximum clique in $X$ given a maximum clique in $\ints_p(X)$, and vice versa.

\section{Quotients of $\ints_p(X)$}\label{LinearCodes}

In order to reduce computing clique number for general graphs to computing clique number for Cayley graphs for the groups $\ints_p^n$, we need to be able to construct an auxiliary graph $X(\ints_p^n,\mathcal{C})$ from a graph $X$ so that: we can find a maximum clique in $X$ efficiently given a maximum clique in $X(\ints_p^n,\mathcal{C})$; and, the size of $X(\ints_p^n,\mathcal{C})$ is bounded by a polynomial in the size of $X$. We showed that $\ints_p(X)$ is an auxiliary graph with the first property. However, as we noted in Section \ref{AuxiliaryGraphs}, $|\ints_p(X)|=p^v$ is exponential in $v$, the size of the input graph. We fix that problem by using linear codes to construct a quotient of $\ints_p(X)$.

Recall that $\ints_p(X)$ is isomorphic to the graph $X(\ints_p^v,\mathcal{C})$ where
\[
\mathcal{C} = \{e_i-e_j\,:\,\text{$i,j$ adjacent in $X$}\}.
\]
Since we will be focussing on this graph, we will take $\ints_p(X)$ to refer to the graph $X(\ints_p^v,\mathcal{C})$, instead of the free Cayley graph.

The vertex set of $\ints_p(X)$ is a vector space, as well as a group. The subspaces of $\ints_p^v$ are linear codes, and following the material in Section \ref{QuotientGraphs} we can use these codes to construct quotients of $\ints_p(X)$. Our goal is to find a code $D$ in $\ints_p^v$ so that $\omega(X)$ can easily be computed from the quotient graph $\ints_p(X)_D$. We achieve this by constraining the distance $d$ of $D$.

\begin{proposition}\label{d3Coclique}
If $D$ is a code in $\ints_p^v$ with distance $d\geq 3$, then $D$ is a coclique in $\ints_p(X)$.
\end{proposition}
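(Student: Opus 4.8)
The plan is to argue by contradiction, exploiting the fact that every element of the connection set $\mathcal{C}$ has weight exactly two, whereas every nonzero codeword of $D$ has weight at least $d\geq 3$. The first observation is that each element of $\mathcal{C}$ has the form $e_i-e_j$ where $\{i,j\}$ is an edge of $X$. Since $X$ is loopless, $i\neq j$, so $e_i-e_j$ has nonzero entries in exactly the two positions $i$ and $j$; hence $w(e_i-e_j)=2$.

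Next I would suppose, for contradiction, that $D$ is not a coclique in $\ints_p(X)$. Then there exist distinct vertices $a,b\in D$ that are adjacent, which by the definition of the Cayley graph means $a-b\in\mathcal{C}$. Because $D$ is a subspace of $\ints_p^v$, we also have $a-b\in D$, and since $a\neq b$ this codeword is nonzero, so $a-b\neq{\bf 0}$.

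Finally I would combine these two facts. As a nonzero codeword of $D$, the vector $a-b$ satisfies $w(a-b)\geq d\geq 3$; but as an element of $\mathcal{C}$ it satisfies $w(a-b)=2$, which is a contradiction. Hence $D$ is a coclique. I do not expect any real obstacle in this argument—the only point requiring care is confirming that every connection set element has weight exactly two, and that follows immediately from the description $\mathcal{C}=\{e_i-e_j:\text{$i,j$ adjacent in $X$}\}$ together with the absence of loops in $X$.
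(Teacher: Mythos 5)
Your proof is correct and rests on exactly the same key fact as the paper's: every connection-set element $e_i-e_j$ has weight $2$, while every nonzero codeword of $D$ has weight at least $d\geq 3$, so $D\cap\mathcal{C}=\emptyset$. The paper phrases this via the equitable partition $\Pi_D$ (showing $\ints_p(X)[D]$ is $|D\cap\mathcal{C}|$-regular), but your more direct route through $a-b\in D\cap\mathcal{C}$ is the same argument in essence.
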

\begin{proof}
From Proposition \ref{CosetPartitionEquitable} we have that $\Pi_D$ is an equitable partition of $\ints_p(X)$. Thus $\ints_p(X)[D]$ is a regular subgraph of $\ints_p(X)$. From Proposition \ref{CosetPartitionEquitable} we have that $\ints_p(X)[D]$ is $|D\cap \mathcal{C}|$-regular.

Now consider any element of $|D\cap \mathcal{C}|$. Since $D$ has distance $d\geq 3$, all non-zero elements of $D$ have weight at least $3$. But $e_i-e_j$ has weight $2$ for all $i\neq j$. Thus $|D\cap \mathcal{C}|=0$, and $D$ is a coclique in $\ints_p(X)$.\qed
\end{proof}
\noindent It follows from Propositions \ref{CosetPartitionEquitable} and \ref{d3Coclique} that $\Pi_D$ gives a partition of $\ints_p(X)$ into cocliques. This immediately implies that any clique in $\ints_p(X)$ gives a corresponding clique in $\ints_p(X)_D$ of equal size. So we have $\omega(\ints_p(X)_D)\geq\omega(\ints_p(X))$. We also have the following immediate corollary.
\begin{corollary}\label{CQuotient}
If $d\geq 3$, then $\ints_p(X)_D$ is a Cayley graph for the group $\ints_p^v/D$ with connection set
\[
\mathcal{C}'=\{(D+e_i)-(D+e_j)\,:\,\text{$i$ is adjacent to $j$ in $X$}\},
\]
and the map $f(g)=D+g$ gives a bijection between $\mathcal{C}$ and $\mathcal{C}'$.\qed
\end{corollary}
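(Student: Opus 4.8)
The plan is to read the Cayley description straight off Proposition~\ref{QuotientCayley} and then treat the bijection claim as a separate, and more delicate, point. Since $\ints_p^v$ is Abelian, the subspace $D$ is a normal subgroup, so Proposition~\ref{QuotientCayley} applies verbatim with $G=\ints_p^v$, $H=D$ and $C=\mathcal{C}$: it gives that $\ints_p(X)_D$ is the Cayley graph $X(\ints_p^v/D,\mathcal{C}'')$ with $\mathcal{C}''=\{D+g:g\in\mathcal{C}\setminus D\}$. Thus the whole statement reduces to (i) identifying $\mathcal{C}''$ with the displayed set $\mathcal{C}'$, and (ii) checking that $f(g)=D+g$ is a bijection $\mathcal{C}\to\mathcal{C}'$.

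For (i), I would first remove the ``$\setminus D$'' from $\mathcal{C}''$. Every element of $\mathcal{C}$ has the form $e_i-e_j$ with $i\neq j$, hence weight exactly $2$; since $d\geq 3$ forces every nonzero codeword of $D$ to have weight at least $3$, no element of $\mathcal{C}$ lies in $D$. This is precisely the computation already made in the proof of Proposition~\ref{d3Coclique}. Hence $\mathcal{C}\setminus D=\mathcal{C}$ and $\mathcal{C}''=\{D+g:g\in\mathcal{C}\}$. Using the quotient group law, $D+(e_i-e_j)=(D+e_i)-(D+e_j)$, so $\mathcal{C}''$ is exactly the set $\mathcal{C}'$ in the statement. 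This already yields the Cayley description and shows that $f$ maps $\mathcal{C}$ onto $\mathcal{C}'$, so surjectivity is free.

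It remains to prove that $f$ is injective, which is the only non-routine part. Suppose $g,g'\in\mathcal{C}$ with $f(g)=f(g')$, say $g=e_i-e_j$ and $g'=e_k-e_l$; then $g-g'=e_i-e_j-e_k+e_l\in D$, and I want $g=g'$. I would argue by cases on the coincidences among $i,j,k,l$. Whenever two indices agree across the two edges (for instance $i=k$ or $j=l$), the vector $g-g'$ collapses to a difference of at most two standard basis vectors, i.e.\ has weight at most $2$; since $d\geq 3$, this forces $g-g'={\bf 0}$ and hence $g=g'$.

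The hard part is the remaining configuration, where the supports of $g$ and $g'$ are essentially disjoint, so that $g-g'$ has weight $3$ (e.g.\ $2e_i-e_j-e_k$) or $4$ (when $i,j,k,l$ are distinct). The hypothesis $d\geq 3$ does not by itself exclude such codewords from $D$: indeed a code spanned by a single weight-$4$ vector $e_i-e_j-e_k+e_l$ has distance $4\geq 3$ yet identifies the distinct elements $e_i-e_j$ and $e_k-e_l$ of $\mathcal{C}$. So this is exactly where I expect the argument to need more than the stated distance bound — either a strengthening guaranteeing that $D$ avoids all weight-$\leq 4$ differences of connection-set elements (which holds as soon as $d\geq 5$, since $w(g-g')\leq 4$ always), or an appeal to the particular code used in the application. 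I would therefore treat the weight-$2$ reduction as routine and concentrate the real effort on ruling out these low-weight differences of nearly disjoint support.
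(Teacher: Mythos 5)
Your derivation of the Cayley-graph description is exactly the argument the paper intends: Corollary~\ref{CQuotient} carries no written proof at all, being presented as an immediate consequence of Proposition~\ref{QuotientCayley} (applicable because $\ints_p^v$ is Abelian, so $D$ is normal) together with the observation, already made in the proof of Proposition~\ref{d3Coclique}, that every element of $\mathcal{C}$ has weight $2<d$ and hence $\mathcal{C}\setminus D=\mathcal{C}$. On that part you and the paper coincide, and your write-up just makes the implicit steps explicit.

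Your worry about injectivity of $f$ is not a defect of your proof but a genuine imprecision in the statement: under the hypothesis $d\geq 3$ alone the bijection claim is false. Take $p=2$, $v=4$, $X$ the graph with edge set $\{\{1,2\},\{3,4\}\}$, and $D=\{{\bf 0},\,e_1+e_2+e_3+e_4\}$; this is a linear code with $d=4\geq 3$, yet $D+(e_1+e_2)=D+(e_3+e_4)$, so $|\mathcal{C}'|=1<2=|\mathcal{C}|$ and $f$ is not injective. Your weight count is the right repair: for distinct $g,g'\in\mathcal{C}$ the difference $g-g'$ is nonzero of weight at most $4$, so $d\geq 5$ already forces injectivity, and everywhere the paper actually relies on the bijection (Lemma~\ref{CliquesGammaD}, Lemma~\ref{EfficientGoppa}) the code has $d\geq 7$, so nothing downstream is affected. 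In short, you have not missed an idea; you have correctly isolated the one clause of the corollary that does not follow from the stated hypothesis, and the paper simply glosses over it.
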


\begin{proposition}\label{d5InducedX}
If $D$ has distance $d\geq 5$, then $\ints_p(X)_D$ contains an induced copy of $X$.
\end{proposition}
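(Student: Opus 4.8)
The plan is to exhibit an explicit induced embedding of $X$ into $\ints_p(X)_D$. Recall that $X$ already embeds in $\ints_p(X)=X(\ints_p^v,\mathcal{C})$ via the canonical embedding $i\mapsto e_i$, so the natural candidate for an induced embedding into the quotient is the composite $\bar h(i)=D+e_i$. To prove the proposition I would verify that $\bar h$ is (i) injective, (ii) edge-preserving, and (iii) non-edge-preserving; the last property is exactly what upgrades a mere embedding to an induced one, and it is where the hypothesis $d\geq 5$ does the work.

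Properties (i) and (ii) are immediate. For injectivity, $\bar h(i)=\bar h(j)$ with $i\neq j$ would force $e_i-e_j\in D$; but $e_i-e_j$ is a nonzero vector of weight $2<5\le d$, so it cannot lie in $D$. For edge-preservation, if $i$ is adjacent to $j$ then $e_i-e_j\in\mathcal{C}$, so by Corollary \ref{CQuotient} we have $D+(e_i-e_j)\in\mathcal{C}'$, which says precisely that $D+e_i$ and $D+e_j$ are adjacent in $\ints_p(X)_D$.

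The heart of the argument is (iii). Fix distinct non-adjacent vertices $i,j$. By Corollary \ref{CQuotient}, $D+e_i$ and $D+e_j$ are adjacent in $\ints_p(X)_D$ precisely when $D+(e_i-e_j)\in\mathcal{C}'$, i.e.\ when there is an edge $\{k,l\}$ of $X$ with $(e_i-e_j)-(e_k-e_l)\in D$. I would rule this out by examining the vector $z=e_i-e_j-e_k+e_l$. Its support is contained in $\{i,j,k,l\}$, so $w(z)\le 4<d$; since every nonzero codeword of $D$ has weight at least $d$, membership $z\in D$ is possible only if $z={\bf 0}$. It therefore suffices to show $z\neq{\bf 0}$, that is, $e_i+e_l\neq e_j+e_k$.

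Establishing $z\neq{\bf 0}$ is the step I expect to require the most care, since it splits according to the parity of $p$. For odd $p$ the $2$-sums $e_a+e_b$ (allowing $a=b$) are distinct as multisets, so $e_i+e_l=e_j+e_k$ would force $\{i,l\}=\{j,k\}$; together with $i\neq j$ and $k\neq l$ this forces $\{i,j\}=\{k,l\}$. For $p=2$ the equation $z={\bf 0}$ reads $e_i+e_j+e_k+e_l={\bf 0}$, which holds iff every index occurs an even number of times in $\{i,j,k,l\}$, and with $i\neq j$ and $k\neq l$ this again forces $\{i,j\}=\{k,l\}$. In either case $\{i,j\}=\{k,l\}$ contradicts the assumption that $\{i,j\}$ is a non-edge while $\{k,l\}$ is an edge. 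Hence $z\neq{\bf 0}$, so $z\notin D$, no such edge exists, and $\bar h$ preserves non-edges. This makes $\bar h$ an induced embedding, and so $\ints_p(X)_D$ contains an induced copy of $X$.
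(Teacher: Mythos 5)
Your proof is correct and follows essentially the same route as the paper: map $i\mapsto D+e_i$, use $d\geq 5$ to exclude weight-$2$ codewords for injectivity and weight-$\leq 4$ codewords for non-edge preservation. Your explicit check that $z=e_i-e_j-e_k+e_l\neq\mathbf{0}$ (split by the parity of $p$) is a detail the paper's proof leaves implicit---there the stated contradiction ``weight at most $4$ and $d\geq 5$'' tacitly assumes the left-hand side is a \emph{nonzero} codeword---so your version is, if anything, slightly more complete.
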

\begin{proof}
Since $d\geq 5$, there is no $i$ so that $e_i\in D$. Moreover, if $D+g$ is a coset of $D$, then for indices $i\neq j$ we cannot have both $e_i,e_j\in D+g$. This follows as otherwise there are $\alpha,\beta\in D$ so that $\alpha+g=e_i$ and $\beta+g=e_j$. Thus
\[
e_i-e_j=\alpha-\beta\in D.
\]
However, $e_i-e_j$ has weight $2<5$, contradicting the distance of $D$. Thus $D+e_i$ is a vertex of $\ints_p(X)_D$ for each $1\leq i\leq v$. We show that the vertices
\[
\{D+e_i\,:\,1\leq i\leq v\}
\]
give an induced copy of $X$ in $\ints_p(X)_D$.

Consider adjacent vertices $i,j$ in $X$. We have that $e_i-e_j\in \mathcal{C}$, and $e_i$ and $e_j$ are connected by an edge in $\ints_p(X)$. We also have that $e_i\in D+e_i$ and $e_j\in D+e_j$, so $D+e_i$ and $D+e_j$ are connected by an edge in $\ints_p(X)_D$. So $X$ is a subgraph of $\ints_p(X)_D$.

Now suppose that $i,j$ are non-adjacent vertices of $X$. Suppose that $D+e_i$ and $D+e_j$ are connected by an edge in $\ints_p(X)_D$. Then we have $\alpha,\beta\in D$ and $e_a-e_b\in \mathcal{C}$ so that
\[
\alpha+e_i+e_a-e_b=\beta+e_j,
\]
or
\[
e_i+e_a-e_b-e_j=\beta-\alpha\in D.
\]
But the weight of the left-hand side of the equation is at most $4$ and $d\geq 5$ so we have a contradiction.\qed
\end{proof}
\noindent Proposition \ref{d5InducedX} immediately implies that $\omega(X)\leq\omega(\ints_p(X)_D)$.

Finally, if we increase the distance of $D$ again, we can show that $\ints_p(X)_D$ will have the same maximum clique size as $X$, with the usual exceptions for $p=2,3$. Our approach is to show that the elements of $\mathcal{C}'$ in each case satisfy the same properties as those of $\mathcal{C}$ with respect to $2$-sums and $3$-sums. As a result, the proofs in Sections \ref{CliquePrelim} and \ref{Zp} (Lemmas \ref{AbGroup}, \ref{Z2Cliques} and \ref{Z3Cliques} in particular) will apply unchanged.
\begin{lemma}\label{CliquesGammaD}
Let $D$ be a code with distance $d\geq 7$. If $p\geq 5$, then $\omega(\ints_p(X)_D)=\omega(X)$. If $p=3$, then $\omega(\ints_p(X)_D)=\omega(X)$ unless $\omega(X)=2$, in which case $\omega(\ints_p(X)_D)=3$. If $p=2$, then $\omega(\ints_p(X)_D)=\omega(X)$ unless $\omega(X)=3$, in which case $\omega(\ints_p(X)_D)=4$.
\end{lemma}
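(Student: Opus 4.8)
The plan is to recognise $\ints_p(X)_D$ as an instance of the auxiliary-graph construction of Section \ref{Embeddings} and then reuse the clique correspondences already established. By Corollary \ref{CQuotient}, $\ints_p(X)_D$ is the Cayley graph for the finite abelian group $G=\ints_p^v/D$ obtained from $X$ via the assignment $i\mapsto \bar e_i:=D+e_i$, with connection set $\mathcal{C}'=\{\bar e_i-\bar e_j : i\sim j\}$. Since Lemmas \ref{AbGroup}, \ref{Z2Cliques} and \ref{Z3Cliques} derive $\omega$ solely from the $2$-sum and $3$-sum behaviour of the generating elements, it suffices to show that the $\bar e_i$ satisfy exactly the same $2$-sum and $3$-sum identities as the canonical generators $e_i$; the three source proofs then apply verbatim.

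The engine for this is a lifting observation. A relation $\bar e_{i_1}+\cdots+\bar e_{i_a}=\bar e_{j_1}+\cdots+\bar e_{j_b}$ with $a+b\le 6$ asserts that the vector $w=e_{i_1}+\cdots+e_{i_a}-e_{j_1}-\cdots-e_{j_b}$ lies in $D$. The support of $w$ is contained in the index set $\{i_1,\dots,i_a,j_1,\dots,j_b\}$, so $\wt(w)\le 6<7\le d$; as $D$ has minimum weight $d$, this forces $w=\mathbf{0}$, i.e.\ the identical relation already holds among the $e_i$ in $\ints_p^v$. In particular every $2$-sum equality (four terms) and every $3$-sum equality (six terms) transfers between the $\bar e_i$ and the $e_i$. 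Hence Propositions \ref{3sum}, \ref{Z2Sums} and \ref{Z3Sums} hold word for word with $\bar e_i$ in place of the canonical generators, and so does Proposition \ref{CommonGen}, whose proof only manipulates such relations.

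With the sum conditions transported, I would finish case by case. For $p\ge 5$ the $3$-sums of the $\bar e_i$ are distinct (transporting Proposition \ref{3sum}), so Lemma \ref{AbGroup} applied to $G=\ints_p^v/D$ yields $\omega(\ints_p(X)_D)=\omega(X)$. For $p=3$, Proposition \ref{Z3Sums} holds for the $\bar e_i$, so the proofs of Lemmas \ref{m3CliqueX} and \ref{Z3Cliques} go through unchanged (using that $G$ is elementary abelian of exponent $3$, so $3\bar e_i=\mathbf 0$), giving $\omega(\ints_p(X)_D)=\omega(X)$ except when $\omega(X)=2$, where it equals $3$. For $p=2$, Propositions \ref{Z2Sums} and \ref{CommonGen} hold for the $\bar e_i$ (and $G$ has exponent $2$, so $\bar e_i=-\bar e_i$), so Lemma \ref{Z2Cliques} gives $\omega(\ints_p(X)_D)=\omega(X)$ except when $\omega(X)=3$, where it equals $4$.

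The main obstacle is purely the bookkeeping in the lifting step: one must confirm that every equation actually used inside the three source proofs involves at most six of the generators. This is where care is needed, since those proofs reason about adjacencies among neighbourhood vertices of the form $g_a-g_b$; an adjacency such as $(g_a-g_b)+(g_e-g_f)=g_c-g_d$ expands to a relation supported on at most the six indices $a,b,c,d,e,f$, and every other relation invoked is likewise a $2$-sum or $3$-sum equality. Thus six coordinates always suffice, which is exactly why the threshold $d\ge 7$ (weight strictly above $6$) is the correct hypothesis. The remaining degeneracy checks — that $\mathcal{C}'$ contains no zero element and that the $\bar e_i$ are distinct — need only the weaker bounds $d\ge 3$ and $d\ge 5$ already recorded in Propositions \ref{d3Coclique} and \ref{d5InducedX}.
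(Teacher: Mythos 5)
Your proposal is correct and follows essentially the same route as the paper: the paper likewise shows that any coincidence of $2$-sums or $3$-sums among the cosets $D+e_i$ would produce a nonzero codeword of weight at most $6$, contradicting $d\geq 7$, and then invokes Lemmas \ref{AbGroup}, \ref{Z2Cliques} and \ref{Z3Cliques} unchanged. Your explicit ``lifting'' formulation and the remark on why six coordinates always suffice is a slightly more systematic packaging of the same weight argument, including the $p=2,3$ details the paper leaves to the reader.
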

\begin{proof}
We begin by assuming that $p\geq 4$. In this case we show that the $3$-sums of elements of $\mathcal{C}'$ are all distinct. Let $D+(e_i+e_j+e_k)$ and $D+(e_a+e_b+e_c)$ be cosets of $D$ for any $\{i,j,k\}\neq\{a,b,c\}$. Suppose that
\[
D+(e_i+e_j+e_k)=D+(e_a+e_b+e_c).
\]
Then we have $\alpha,\beta\in D$ so that
\[
\alpha+e_i+e_j+e_k=\beta+e_a+e_b+e_c,
\]
and as a result,
\[
e_i+e_j+e_k-e_a-e_b-e_c=\alpha-\beta\in D.
\]
This gives an immediate contradiction as the weight of the left-hand side of this equation is at most $6$ and at least $2$, while $d\geq 7$. Therefore we have that $\omega(\ints_p(X)_D)=\omega(X)$.

In the cases $p=2$ and $p=3$, we need to show that the $2$-sums and $3$-sums of the cosets of $D$ corresponding to the vectors $e_i$ satisfy Propositions \ref{Z2Sums} and \ref{Z3Sums}. The proofs follow similar reasoning as argument given above, so we omit the details.\qed
\end{proof}

To complete the construction of our auxiliary graph, it remains to show that we can find a code $D$ with $d\geq 7$, and with rank large enough so that $|\ints_p(X)_D|$ is polynomial in $|X|$.

\section{A Goppa Code}\label{AGoppaCode}

Consider the Goppa polynomial $g(x)=x^6$. For any $m\geq 1$, $g(x)$ is a polynomial with coefficients in $\GF(p^m)$ and every element of $\GF(p^m)\setminus\{0\}$ is a non-root of $g(x)$. So we can let $L$ be any subset of non-zero elements of $\GF(p^m)$. Let $D(g,L)$ be the Goppa code constructed from $g(x)$ and $L$. From Section \ref{GoppaCodes} we have that $D(g,L)$ will have distance $d\geq 7$, block length $|L|$ and rank $k\geq |L|-6m$ where $|L|\leq p^m-1$.

We want the block length of $D(g,L)$ to be $v$, the number of vertices of $X$. We also want the rank of $D(g,L)$ to satisfy $p^{v-k}\leq f(v)$ for all $v\geq N$, where $f(x)$ is a polynomial and $N$ is some fixed integer. The block length of $D(g,L)$ is $|L|$, and $L$ can be any subset of $\GF(p^m)\setminus\{0\}$. So we are able to choose some such $L$ with $|L|=v$ provided $v\leq p^m-1$. We rearrange the constraint $p^{v-k}\leq f(v)$ as $k\geq v-\log_pf(v)$. In order to ensure this inequality is satisfied, we want to choose $m$ so that
\[
k\geq v-6m\geq v-\log_pf(v),
\]
or $m\leq\log_pf(v)/6$.

\begin{lemma}\label{7Goppa}
There is an integer $N$ so that for all $v\geq N$, we can choose $m$ to satisfy $v\leq p^m-1$ and $m\leq \log_pv^{12}/6$.
\end{lemma}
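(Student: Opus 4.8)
The plan is to translate the two constraints on $m$ into the single requirement that an integer $m$ lie in a certain real interval, and then to show that for all sufficiently large $v$ this interval has length at least $1$, so that it must contain an integer.

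First I would simplify the two target inequalities. Since $\log_p v^{12}/6 = 12\log_p v/6 = 2\log_p v$, the condition $m\le\log_p v^{12}/6$ is just $m\le 2\log_p v$. Likewise $v\le p^m-1$ is equivalent to $p^m\ge v+1$, i.e. $m\ge\log_p(v+1)$. So the lemma amounts to asking for an integer $m$ in the closed interval $[\log_p(v+1),\,2\log_p v]$.

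Next I would estimate the length of this interval. Its length is $2\log_p v-\log_p(v+1)=\log_p\!\bigl(v^2/(v+1)\bigr)$, and this is at least $1$ precisely when $v^2/(v+1)\ge p$, i.e. when $v^2-pv-p\ge 0$. A direct check shows $v^2-pv-p=v(v-p)-p\ge 1>0$ for all $v\ge p+1$, so for every $v\ge p+1$ the interval has length at least $1$.

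Finally I would invoke the elementary fact that any closed real interval of length at least $1$ contains an integer: taking $m=\lfloor 2\log_p v\rfloor$ gives $m\le 2\log_p v$ automatically, while $m>2\log_p v-1\ge\log_p(v+1)$ yields $p^m>v+1$, hence $v\le p^m-1$; thus both constraints hold. Setting $N=p+1$ then completes the argument (recall $p$ is a fixed constant, so $N$ may depend on it). The only point needing care is the length estimate ensuring the interval is wide enough to capture an integer, but there is no genuine obstacle here: the gap between the two bounds grows like $\log_p v$ and so eventually exceeds $1$.
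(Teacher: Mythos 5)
Your proof is correct and follows essentially the same route as the paper: both reduce the two conditions to finding an integer $m$ in an interval of the form $(\log_p v,\,2\log_p v]$ (you use $[\log_p(v+1),\,2\log_p v]$) and then argue the interval is long enough to contain an integer. The paper simply takes $N=p^2$ so that the length is $\log_p v\ge 2$, whereas you work out the sharper threshold $N=p+1$; the difference is immaterial.
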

\begin{proof}
Note that $m\leq\log_pv^{12}/6$ implies $m\leq\log_pv^2$, or $p^m\leq v^2$. Also the condition $v\leq p^m-1$ is equivalent to $v<p^m$ as all the quantities are integers.

Take $N=p^2$. Now for any $v\geq N$, the interval $(\log_pv,2\log_pv]$ contains an integer, as $\log_pv\geq 2$. Choose $m$ to be the largest such integer.\qed
\end{proof}
\noindent Choose $m$ to be an integer in $(\log_pv,2\log_pv]$. We can take $L$ to be an arbitrary set of non-zero elements of $\GF(p)^m$ of size $v$, and the Goppa code $D(g,L)$ will have rank $k\geq v-\log_pv^{12}$. This shows that a suitable Goppa code always exists (and is easily specified). It remains to show that we can use $D=D(g,L)$ and construct $\ints_p(X)_D$ efficiently.

The construction we have outlined so far involves constructing $\ints_p(X)$, and then $\ints_p(X)_D$ as a quotient of $\ints_p(X)$. However, this involves constructing a graph with an exponential number of vertices. In order to get around this problem, we note that a Cayley graph is specified by its connection set. 

By Corollary \ref{CQuotient} we have that $\ints_p(X)_D$ is a Cayley graph $X(\ints_p^{v-k},\mathcal{C}')$, so we can construct $\ints_p(X)_D$ as follows. First we take the connection set $\mathcal{C}$ of $\ints_p(X)$ defined as usual (this set has size polynomial in $v$ as its size is a constant multiple of the number of edges of $X$). Then we will use the generator matrix of $D(g,L)$ to construct the connection set $\mathcal{C}'$ of $\ints_p(X)_D$ from $\mathcal{C}$ directly. As long as this can be done in polynomial time with a polynomial amount of space, we will have the desired construction.

We have already seen an explicit description of a parity check matrix $H$ for $D(g,L)$ in Section \ref{GoppaCodes}. Using $H$ we can recover a generator matrix $B$ for $D(g,L)$ so that $D(g,L)=\row(B)$. From the rows of $B$ we can find a basis $\{\beta_1,\ldots,\beta_k\}$ for $D(g,L)$ and extend this basis to a basis for $\ints_p^v$,
\[
\{\beta_1,\ldots,\beta_k,\beta_{k+1},\ldots,\beta_n\}.
\]
Now any $\alpha\in\ints_p^v$ can be written uniquely as
\[
\alpha=\sum_{i=1}^va_i\beta_i
\]
where the $a_i$ are elements of $\ints_p$. Furthermore, in the quotient space $\ints_p^v/D(g,L)$, the coset containing $\alpha$ is
\[
D+\left(\sum_{i=k+1}^va_i\beta_i\right).
\]
Thus the elements of the connection set $\mathcal{C}$ can be expressed using our basis, and we set
\[
\mathcal{C}'=\left\{\sum_{i=k+1}^va_i\beta_i\,:\,\sum_{i=1}^va_i\beta_i\in \mathcal{C}\right\}.
\]

\begin{lemma}\label{EfficientGoppa}
Let $p$ be a fixed prime. Given a graph $X$ with at least $p^2$ vertices, $\ints_p(X)_D$ can be constructed in polynomial time and space.
\end{lemma}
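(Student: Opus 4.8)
The plan is to exploit the fact that a Cayley graph is completely determined by its group together with its connection set, so that we never need to write down the exponentially large graph $\ints_p(X)$ itself. By Corollary \ref{CQuotient} the target graph is $\ints_p(X)_D=X(\ints_p^{v-k},\mathcal{C}')$, and by Lemma \ref{7Goppa} we may choose $m$ to be an integer in $(\log_p v,2\log_p v]$. With this choice the rank bound $v-k\le 6m$ gives $p^{v-k}\le p^{6m}\le v^{12}$, so the ambient group $\ints_p^{v-k}$ already has a number of elements polynomial in $v$; moreover the connection set $\mathcal{C}$ of $\ints_p(X)$ has size $2|E(X)|$ (or $|E(X)|$ when $p=2$), which is also polynomial in $v$. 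Hence it suffices to produce $\mathcal{C}'$ from $\mathcal{C}$ in polynomial time, after which the entire adjacency structure of $X(\ints_p^{v-k},\mathcal{C}')$ can be listed in polynomial time, since both its vertex set and $\mathcal{C}'$ have polynomial size.

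First I would fix an explicit model of the field $\GF(p^m)$. Because $p^m\le v^2$, there are at most $v^2$ monic polynomials of degree $m$ over $\GF(p)$, so we may search through them and test irreducibility in time polynomial in $v$; this produces an irreducible $f$ and a representation $\GF(p^m)\cong\GF(p)[x]/(f)$ in which every field operation costs $\mathrm{poly}(v)$. I would then take $L=\{\alpha_1,\dots,\alpha_v\}$ to be any $v$ distinct nonzero elements of $\GF(p^m)$, which is possible precisely because $v\le p^m-1$.

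Next I would build a parity check matrix $H$ for $D=D(g,L)$ directly from the formula in Section \ref{GoppaCodes}. With $g(x)=x^6$ one has $g(\alpha_j)=\alpha_j^6$, so $H'[i,j]=\alpha_j^{\,i-6}$ for $1\le i\le 6$ and $1\le j\le v$; expanding each $\GF(p^m)$ entry into its $m$ coordinates over $\GF(p)$ yields a $6m\times v$ matrix $H$ over $\GF(p)$, computed with $\mathrm{poly}(v)$ field operations. From $H$ I would compute $D=\ns(H)$ by Gaussian elimination over $\GF(p)$, obtain a generator matrix $B$ with $\row(B)=D$, extract a basis $\{\beta_1,\dots,\beta_k\}$ of $D$ from the rows of $B$, and extend it to a basis $\{\beta_1,\dots,\beta_v\}$ of $\ints_p^v$, all by standard polynomial-time linear algebra.

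Finally, for each $c\in\mathcal{C}$ I would solve the linear system $c=\sum_{i=1}^v a_i\beta_i$ for the coordinates $a_i\in\ints_p$ — a single change-of-basis computation — and record the image $(a_{k+1},\dots,a_v)$ as the corresponding element of $\mathcal{C}'\subseteq\ints_p^{v-k}$. Since $|\mathcal{C}|=O(v^2)$ and each solve is polynomial, the whole set $\mathcal{C}'$ is produced in polynomial time and space, and assembling the group $\ints_p^{v-k}$ with connection set $\mathcal{C}'$ gives $\ints_p(X)_D$. The only genuinely delicate point is the explicit construction of, and arithmetic in, $\GF(p^m)$; everything else is routine linear algebra. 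The whole argument hinges on Lemma \ref{7Goppa}, whose rank bound is exactly what keeps $p^{v-k}$, and therefore the output, polynomial in $v$.
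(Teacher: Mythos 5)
Your proposal is correct and follows essentially the same route as the paper: choose $m$ via Lemma \ref{7Goppa}, build $\GF(p^m)$ explicitly, form the Goppa parity check matrix for $g(x)=x^6$, recover a basis of $D$ and extend it to a basis of $\ints_p^v$ by Gaussian elimination, and push $\mathcal{C}$ through the change of basis to get $\mathcal{C}'$. The only cosmetic differences are that you find the irreducible polynomial by exhaustive search (using $p^m\le v^2$) where the paper cites a known polynomial-time construction, and you take $L$ to be arbitrary distinct nonzero elements rather than powers of a primitive element; both choices are equally valid.
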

\begin{proof}
From Lemma \ref{7Goppa}, we choose $m$ to be the largest integer in $(\log_pv,2\log_pv]$. Construct the field $\GF(p^m)$ by finding an irreducible polynomial $f$ of degree $m$ over the field $\GF(p)$, and representing $\GF(p^m)$ as $\GF(p)[x]/\langle f(x)\rangle$ where $\langle f(x)\rangle$ is the ideal generated by $f(x)$. This can be done in time polynomial in $m$ \cite{Codenotti}, and hence in time polynomial in $v$.

We choose a subset $L\subseteq \GF(p^m)\setminus\{0\}$ with $|L|=v$ as follows. Let $\alpha\in \GF(p^m)$ be a primitive element. We can find $\alpha$ by calculating $a^i$ for all $1\leq i\leq p^m-1$ and $a\in \GF(p^m)$. This involves checking at most $p^m\leq v^2$ elements, each of which requires at most $v^2$ multiplications in $\GF(p^m)$, so this can be accomplished in polynomial time.

Set $L=\{\alpha^i\,:1\leq i\leq v\}$. Set $g(x)=x^6$, and consider the Goppa code $D(g,L)$. We construct a check matrix $H$ for $D(g,L)$ as in Section \ref{GoppaCodes}. We set
\[
H'[i,j]=\alpha_j^ig(\alpha_j)^{-1}
\]
for $1\leq i\leq r$ and $1\leq j\leq v$. Since $H'$ is a $r\times v$ matrix, with $r<v$, this involves at most $v^2$ calculations, each of which involves $O(v^2)$ computations in $\GF(p^m)$. We obtain a $rm\times v$ matrix $H$ from $H'$ by replacing each entry of $H'$ with a vector in $\GF(p)^m$ corresponding to its entry in $\GF(p^m)$. Again this requires at most $v^2$ replacement operations, each of which takes time $O(v)$, as given a polynomial $\alpha\in \GF(p^m)$ (recall that we are using the construction $\GF(p^m)=\GF(p)[x]/\langle f(x)\rangle$) we replace $\alpha$ with the vector of its coefficients in $\GF(p)$.

From $H$ we construct a basis for $\ints_p^v$. We have that $H$ is a $rm\times v$ matrix whose rows span a space of dimension $v-k$, and whose null space has dimension $k$. We take $\{\beta_1,\ldots,\beta_k\}$ to be a basis for $\ns(H)=D(g,L)$, and $\{\beta_{k+1},\ldots,\beta_v\}$ to be a basis for $\row(H)$. Now $B=[\beta_1\ldots\beta_v]$ is a matrix whose columns are a basis for $\ints_p^v$. We can find $B$ by converting $H$ into reduced row-echelon form (in time polynomial in $v$, as $H$ has at most $v$ rows and columns).

For each $1\leq i\leq v$ let $e_i$ be the $i$th standard basis vector in $\ints_p^v$. Let
\[
\mathcal{C}=\{e_i-e_j\,:\text{$i$ is adjacent to $j$ in $X$}\},
\]
as usual. Each element of $\mathcal{C}$ can be uniquely expressed as a sum of columns of $B$. So we set
\[
\mathcal{C}'=\left\{\sum_{i=k+1}^va_i\beta_i\,:\,\sum_{i=1}^va_i\beta_i\in \mathcal{C}\right\}.
\]
For each $\alpha\in\ints_p^v$, to write $\alpha$ as a sum of columns of $B$, we solve the matrix equation $Bx=\alpha$. This can be done in polynomial time for each $\alpha\in \mathcal{C}$, so in total we solve $O(v^2)$ equations to find $\mathcal{C}'$.

From $\mathcal{C}'$ we construct the graph $\ints_p(X)_D=X(\ints_p^{v-k},\mathcal{C}')$. Recall that we chose $m$ so that $p^{v-k}$ is polynomial in $v$. So constructing $\ints_p(X)_D$ is done in polynomial time and space.\qed
\end{proof}

\section{Clique Number}\label{CliqueNumber}

In the preceding sections, we have given a construction of a Cayley graph $\ints_p(X)_D$ for a group $\ints_p^m$ from a graph $X$. We now show how to use this construction to prove our first main result.

\begin{theorem}\label{SpecialCase}
Let $p$ be a prime. Computing clique number is NP-Hard for the class of Cayley graphs for the groups $\ints_p^n$.
\end{theorem}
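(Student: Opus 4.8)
The plan is to reduce the known NP-Hard problem of computing clique number for a general graph $X$ to computing the clique number of a Cayley graph for $\ints_p^n$, relying on the machinery developed in the preceding sections. Given an arbitrary input graph $X$ on $v$ vertices, I would construct the auxiliary graph $\ints_p(X)_D$, where $D=D(g,L)$ is the Goppa code with Goppa polynomial $g(x)=x^6$ described in Section \ref{AGoppaCode}. By Corollary \ref{CQuotient} this quotient is itself a Cayley graph $X(\ints_p^{v-k},\mathcal{C}')$ for the group $\ints_p^{v-k}$, so it lies in the target class, and by Lemma \ref{EfficientGoppa} it can be built in time and space polynomial in $v$ (once $v\geq p^2$; the finitely many smaller graphs can be handled by brute force or absorbed into the reduction).

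The correctness of the reduction is exactly the content of Lemma \ref{CliquesGammaD}: since $D$ has distance $d\geq 7$, we have $\omega(\ints_p(X)_D)=\omega(X)$ when $p\geq 5$, while for $p=2$ and $p=3$ the clique number of the quotient agrees with $\omega(X)$ except in the single exceptional cases $\omega(X)=3$ (giving $\omega=4$) and $\omega(X)=2$ (giving $\omega=3$) respectively. In every case $\omega(X)$ is recovered from $\omega(\ints_p(X)_D)$ by a trivial computation: for $p\geq 5$ the two are equal, and for $p=2,3$ one reads off $\omega(X)$ from $\omega(\ints_p(X)_D)$ by inverting the piecewise correspondence (the only values that could be ambiguous are the small exceptional ones, which can be checked directly since determining whether $X$ has a clique of size $2$, $3$, or $4$ is trivial). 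Thus an oracle, or a polynomial-time algorithm, for clique number on the Cayley graphs $X(\ints_p^n,\mathcal{C}')$ yields a polynomial-time algorithm for $\omega(X)$ on arbitrary $X$.

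Putting these pieces together, I would argue as follows. Suppose for contradiction that computing clique number for Cayley graphs on the groups $\ints_p^n$ were not NP-Hard, i.e.\ admits a polynomial-time algorithm (or reduces to something easy). Given any graph $X$, construct $\ints_p(X)_D$ in polynomial time via Lemma \ref{EfficientGoppa}, compute $\omega(\ints_p(X)_D)$, and then recover $\omega(X)$ using Lemma \ref{CliquesGammaD}. This computes $\omega(X)$ for arbitrary $X$ in polynomial time, which is impossible unless $\text{P}=\text{NP}$, since computing the clique number of a general graph is NP-Hard. Hence computing clique number is NP-Hard for the class of Cayley graphs for the groups $\ints_p^n$.

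The main obstacle is not any single step in isolation but making sure the polynomial bounds genuinely compose: the reduction must produce a graph whose \emph{size} is polynomial in $v$, and this is precisely why the rank bound $k\geq v-6m$ from the Goppa construction, together with the choice of $m\in(\log_pv,2\log_pv]$ from Lemma \ref{7Goppa}, is essential, guaranteeing $p^{v-k}$ is polynomial in $v$. All the hard structural work has already been front-loaded into Lemma \ref{CliquesGammaD} and Lemma \ref{EfficientGoppa}, so the theorem itself is essentially an assembly of these results; the care required is simply in verifying that the construction, the clique-number computation, and the recovery of $\omega(X)$ are each polynomial and that they chain together correctly, including the handling of the exceptional small-clique cases for $p=2$ and $p=3$.
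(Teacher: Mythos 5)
Your proposal is correct and follows essentially the same route as the paper: construct $\ints_p(X)_D$ via the Goppa code (Lemma \ref{EfficientGoppa}), invoke Lemma \ref{CliquesGammaD} to recover $\omega(X)$ from $\omega(\ints_p(X)_D)$ with the exceptional small-clique cases for $p=2,3$ checked directly, and handle the finitely many graphs with $v<p^2$ by brute force. The only cosmetic quibble is your closing ``suppose not NP-Hard, i.e.\ admits a polynomial-time algorithm'' framing --- failing to be NP-Hard is not the same as being polynomial-time solvable --- but this does not affect the substance, since what you actually exhibit (and what the paper exhibits, phrased via an oracle) is a polynomial-time Turing reduction from general clique number, which is exactly what NP-Hardness requires.
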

\begin{proof}
Assume that we are given an oracle $\Omega$ that computes the clique number of any graph $X(\ints_p^n,C)$ in time polynomial in $p^n$. We are given a graph $X$ on $v$ vertices.

Suppose $v<p^2$. In this case we simply solve for $\omega(X)$ exhaustively (since there are only finitely many graphs with less than $p^2$ vertices).

Assume $v\geq p^2$. By Lemma \ref{EfficientGoppa}, we can construct an auxiliary graph $\ints_p(X)_D=X(\ints_p^m,\mathcal{C}')$ from $X$ in polynomial time. By construction, the size of $\ints_p(X)_D$ is bounded polynomially in $v$. We use our clique number oracle $\Omega$ to compute $\omega(\ints_p(X)_D)$. By assumption, $\Omega$ runs in time polynomial in the size of the input graph, which is polynomial in $v$. So $\Omega$ returns $\omega(\ints_p(X)_D)$ in time polynomial in $v$.

Finally, we compute $\omega(X)$ from $\omega(\ints_p(X)_D)$. If $p\geq 4$, Lemma \ref{CliquesGammaD} shows that $\omega(X)=\omega(\ints_p(X)_D)$, so our computation takes constant time. If $p=2$, then Lemma \ref{CliquesGammaD} gives us that either $\omega(X)=\omega(\ints_p(X)_D)$, or that $\omega(\ints_p(X)_D)=4$ and $\omega(X)=3$ or $4$. If $\omega(\ints_p(X)_D)=4$, we check the $4$-subsets of $V(X)$ exhaustively for cliques to determine whether $\omega(X)=3$ or $4$. This takes $O(v^4)$, so the entire procedure runs in polynomial time. Likewise for $p=3$, Lemma \ref{CliquesGammaD} gives a polynomial time method to compute $\omega(X)$ from $\omega(\ints_p(X)_D)$.\qed
\end{proof}

As a final note, we point out that in the above proof, if our oracle $\Omega$ returns a maximum clique in the auxiliary graph $\ints_p(X)_D$, then the proofs of Lemmas \ref{AbGroup}, \ref{Z2Cliques} and \ref{Z3Cliques} give a method for finding a maximum clique in $X$ in polynomial time.

Theorem \ref{SpecialCase} easily generalizes to our full result.

\begin{theorem}\label{MainTheorem}
Let $G$ be a finite group. Computing clique number is NP-Hard for the class of Cayley graphs for the groups $G^n$.
\end{theorem}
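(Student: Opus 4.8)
The plan is to reduce Theorem \ref{MainTheorem} to the already-established Theorem \ref{SpecialCase}. The key observation is that a finite group $G$ has some exponent, and in particular some prime $p$ divides $|G|$. By Cauchy's theorem, $G$ contains an element of order $p$, hence a subgroup isomorphic to $\ints_p$. The strategy is to embed the class of Cayley graphs for $\ints_p^n$ into the class of Cayley graphs for $G^n$ (or a closely related power of $G$), so that the NP-Hardness of computing clique number for the former transfers to the latter. I would first fix the smallest prime $p$ dividing $|G|$ and let $\ints_p\cong K\leq G$ be the cyclic subgroup guaranteed by Cauchy's theorem. Then $K^n$ is a subgroup of $G^n$, and I want to realize a given Cayley graph $X(\ints_p^n, C)$ as (isomorphic to) an induced subgraph, or better a quotient-compatible piece, of a Cayley graph for $G^n$.

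The cleanest route I would pursue uses the quotient machinery from Section \ref{QuotientGraphs}. Given an arbitrary input graph $X$, Theorem \ref{SpecialCase} constructs $\ints_p(X)_D = X(\ints_p^m, \mathcal{C}')$ whose clique number determines $\omega(X)$. I would show that this same Cayley graph arises as a quotient $X(G^N, \widetilde{\mathcal{C}})_H$ for a suitable power $G^N$ and subgroup $H$, using Proposition \ref{QuotientCayley}: if $G^N/H \cong \ints_p^m$, then choosing the connection set $\widetilde{\mathcal{C}}$ to be (a lift of) $\mathcal{C}'$ makes the simplified quotient equal to $X(\ints_p^m,\mathcal{C}')$. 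Concretely, since $\ints_p^m$ is itself a quotient of $G^N$ for appropriate $N$ (map each copy of $G$ onto $\ints_p$ via a surjective homomorphism $G\to\ints_p$, which exists because $p\mid |G|$ forces a composition factor or at least an abelianization argument), I can pull $\mathcal{C}'$ back to a connection set on $G^N$. The resulting graph $X(G^N,\widetilde{\mathcal{C}})$ is a Cayley graph for $G^N$, it has size polynomial in $v$ (since $|G|$ is a fixed constant and $N$ is polynomial in $m$, which is logarithmic in $v$), and its relevant quotient recovers $\ints_p(X)_D$.

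The main obstacle I anticipate is the existence of a surjective homomorphism $G\to\ints_p$: this requires that $p$ divides the order of the abelianization $G/[G,G]$, which need not hold for an arbitrary finite group even when $p\mid|G|$ (for instance, a nonabelian simple group has trivial abelianization). To handle this cleanly, rather than quotienting $G$ onto $\ints_p$, I would instead use the subgroup embedding $\ints_p\cong K\leq G$ directly and argue at the level of subgraphs: take the Cayley graph $X(G^n, \widetilde{\mathcal{C}})$ where $\widetilde{\mathcal{C}}\subseteq K^n$ is exactly the image of the connection set $\mathcal{C}'$ under the inclusion $\ints_p^m\hookrightarrow K^n = \ints_p^n$ (identifying $K$ with $\ints_p$). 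Because $\widetilde{\mathcal{C}}$ lies entirely inside the subgroup $K^n$, the subgraph of $X(G^n,\widetilde{\mathcal{C}})$ induced on $K^n$ is precisely $X(\ints_p^n, \mathcal{C}')=\ints_p(X)_D$, and no edge of the Cayley graph leaves a coset of $K^n$; hence $X(G^n,\widetilde{\mathcal{C}})$ is a disjoint union of $[G^n:K^n]=(|G|/p)^n$ copies of $\ints_p(X)_D$, giving $\omega(X(G^n,\widetilde{\mathcal{C}})) = \omega(\ints_p(X)_D)$. I would conclude by running the reduction of Theorem \ref{SpecialCase} through this identification: an oracle computing clique number for Cayley graphs on $G^n$ computes $\omega(X(G^n,\widetilde{\mathcal{C}}))=\omega(\ints_p(X)_D)$, from which $\omega(X)$ is recovered in polynomial time exactly as before. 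The only point requiring care is the polynomial size bound, which holds because $|G|$ is a fixed constant, so $|G^n|=|G|^n$ is a fixed-constant power of $p^n=|\ints_p(X)_D|$, which is already polynomial in $v$.
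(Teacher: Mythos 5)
Your final argument (the third paragraph) is exactly the paper's proof: invoke Cauchy's theorem to embed $\ints_p$ as a subgroup $K\leq G$, place the connection set of $\ints_p(X)_D$ inside $K^m\leq G^m$ so that the resulting Cayley graph is a disjoint union of $(|G|/p)^m$ copies of $\ints_p(X)_D$ with the same clique number, and bound the size polynomially since $|G|\leq p^{\alpha}$ for fixed $\alpha$. You correctly identified and discarded the flawed quotient route (a surjection $G\to\ints_p$ need not exist), so the proposal as it stands is correct and essentially identical to the paper's.
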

\begin{proof}
As in the proof of Theorem \ref{SpecialCase} we give a polynomial time reduction from the clique number problem on the class of all graphs. We are given a graph $X$ on $v$ vertices, and want to construct an auxiliary graph that is a Cayley graph for a group of the form $G^m$. Let $p$ be a prime so that there is a subgroup $H$ of $G$ with $H\cong\ints_p$ (the existence of $p$ follows immediately from Cauchy's Theorem, see p. 10 in \cite{Isaacs}).

We construct $\ints_p(X)_D=X(\ints_p^m,\mathcal{C})$ as usual. Recall that our construction ensures that $p^m$ is polynomially bounded in $v$, the graph $\ints_p(X)_D$ can be constructed in time bounded by a polynomial in $v$, and $\omega(X)$ can be calculated from $\omega(\ints_p(X)_D)$ in time polynomial in $v$.

Since $\mathcal{C}\subseteq\ints_p^m$ the isomorphism between $\ints_p^m$ and $H^m$ maps $\mathcal{C}$ to $\mathcal{C}'\subseteq H^m$. Consider the graph $\Gamma=X(G^m,\mathcal{C}')$. Since $\mathcal{C}'\subseteq H$, the graph $\Gamma$ consists of $(|G|/|H|)^m$ isomorphic copies of $\ints_p(X)_D$ (i.e., we have one copy of $\ints_p(X)_D$ for each coset of $H^m$ in $G^m$). Therefore either $\omega(X)=\omega(\Gamma)$, or $p=2,3$ and we have the usual caveats.

Moreover, we know that $p^m\leq f(v)$ where $f(x)$ is some polynomial in $x$. There is some integer $\alpha$ so that $|G|\leq p^{\alpha}$. Thus
\[
(|G|/|H|)^m\leq (p^{\alpha-1})^m=(p^m)^{\alpha-1}\leq (f(v))^{\alpha-1}
\]
and the size of $\Gamma$ is bounded by a polynomial in $v$. This completes the proof.\qed
\end{proof}

\section{Chromatic Number}\label{ChromaticNumber}

Given Theorem \ref{MainTheorem}, it is natural to consider other hard problems on graphs, and ask whether those problems remain hard for the class of Cayley graphs for the groups $G^n$. Codenotti et al. \cite{Codenotti} prove that the chromatic number problem is NP-Hard for circulants. So we might hope to adapt their argument to prove that the chromatic number problem is NP-Hard for our class of Cayley graphs. However, a straightforward adaptation does not work. In this section we prove that computing chromatic number cannot be done in polynomial time for the class of Cayley graphs on the groups $G^n$ where $G$ is a fixed finite group.

We begin by restricting our consideration to the groups $\ints_p^n$ for some prime $p$ (as in the proof of Theorem \ref{MainTheorem}). Given a graph $X$ on $v$ vertices, we will use our previous construction to construct $\ints_p(X)_D$, with size $p^n$  bounded by a polynomial in $v$. Given a Cayley graph for $\ints_p^n$ we use the following construction to construct a Cayley graph for $\ints_p^m$ whose chromatic number is related to the clique number of the input graph.

Let $\Gamma=X(\ints_p^n,C)$ and $1\leq i\leq n$. Let $\Gamma_i$ be the graph obtained from $K_{p^i}\Box K_{p^n}$ by adding an edge between $(a,x)$ and $(b,y)$ if and only if $a\neq b$ and $x$ and $y$ are not adjacent in $\Gamma$ (where $a,b\in\ints_p^i$ and $x,y\in\ints_p^n$).

\begin{lemma}\label{AuxChi}
For each $1\leq i\leq n$, the graph $\Gamma_i$ is a Cayley graph for the group $\ints_p^{n+i}$ with $\alpha(\Gamma_i)=\min\{p^i,\omega(\Gamma)\}$. Moreover, $\chi(\Gamma_i)=p^n$ if and only if $\omega(\Gamma)\geq p^i$.
\end{lemma}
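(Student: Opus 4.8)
The plan is to first extract a transparent description of adjacency in $\Gamma_i$, then read off the Cayley structure and the independence number, and finally settle the two directions of the chromatic-number equivalence via a matching lower bound and an explicit colouring. The starting observation I would record is that, for distinct vertices $(a,x)$ and $(b,y)$ of $\Gamma_i$, they are \emph{non-adjacent} precisely when $a\neq b$ and $x\sim_\Gamma y$. This is just bookkeeping: if $a=b$ (so $x\neq y$) the $K_{p^n}$ factor already joins them; if $a\neq b$ the vertices are joined either by the $K_{p^i}$ factor (when $x=y$) or by an added edge (when $x,y$ are non-adjacent in $\Gamma$), and since $\Gamma$ is loopless the case $x=y$ is subsumed by ``$x,y$ non-adjacent in $\Gamma$'', so for $a\neq b$ the pair is adjacent iff $x,y$ are non-adjacent in $\Gamma$.

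From this description the Cayley structure is immediate. Writing the group as $\ints_p^i\times\ints_p^n\cong\ints_p^{n+i}$, the connection set is
\[
S=\{(0,z):z\neq 0\}\cup\{(w,z):w\neq 0,\ z\in\{0\}\cup C\},
\]
which is identity-free and closed under negation because $C=-C$; hence $\Gamma_i=X(\ints_p^{n+i},S)$, giving the first assertion. Next I would compute $\alpha(\Gamma_i)$ directly from the non-adjacency rule: a set $\{(a_j,x_j)\}$ is independent iff the $a_j$ are pairwise distinct and the $x_j$ are pairwise adjacent in $\Gamma$. The first condition forces the size to be at most $p^i$, and the second forces $\{x_j\}$ to be a clique of $\Gamma$, hence size at most $\omega(\Gamma)$. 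Conversely, taking $\min\{p^i,\omega(\Gamma)\}$ distinct first coordinates and pairing them with distinct vertices of a maximum clique of $\Gamma$ realises this, so $\alpha(\Gamma_i)=\min\{p^i,\omega(\Gamma)\}$.

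For the chromatic number I would invoke the bound $\chi(\Gamma_i)\geq |V(\Gamma_i)|/\alpha(\Gamma_i)=p^{n+i}/\min\{p^i,\omega(\Gamma)\}$. If $\omega(\Gamma)<p^i$ this yields $\chi(\Gamma_i)>p^n$, so $\chi(\Gamma_i)\neq p^n$, which disposes of one direction. For the converse, assume $\omega(\Gamma)\geq p^i$; the same bound gives $\chi(\Gamma_i)\geq p^n$, so it remains only to exhibit a proper colouring with $p^n$ colours. This is the crux and the step I expect to be the main obstacle. I would fix a clique $\{w_a:a\in\ints_p^i\}$ of $\Gamma$ of size $p^i$ (available since $\omega(\Gamma)\geq p^i$) and, for each $z\in\ints_p^n$, set
\[
I_z=\{(a,\,w_a+z):a\in\ints_p^i\}.
\]
Because $\Gamma$ is a Cayley graph, translation by $z$ is an automorphism, so each translate of the clique is again a clique and each $I_z$ is an independent set of size $p^i$; moreover, for fixed $a$ the map $z\mapsto w_a+z$ is a bijection of $\ints_p^n$, so the sets $I_z$ partition $V(\Gamma_i)$ into $p^n$ independent sets.

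This partition gives $\chi(\Gamma_i)\leq p^n$, hence $\chi(\Gamma_i)=p^n$ when $\omega(\Gamma)\geq p^i$. Combined with the case $\omega(\Gamma)<p^i$, this establishes that $\chi(\Gamma_i)=p^n$ if and only if $\omega(\Gamma)\geq p^i$, completing the proof. The only genuinely nonroutine idea is the colouring by translates of a single clique, which is exactly where vertex-transitivity of $\Gamma$ is used; everything else reduces to the adjacency description and the elementary bound $\chi\geq |V|/\alpha$.
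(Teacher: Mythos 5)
Your proposal is correct and follows essentially the same route as the paper: identify the Cayley structure by writing down a connection set, compute $\alpha(\Gamma_i)$ from the non-adjacency description, get the ``only if'' direction from $\chi\geq|V|/\alpha$, and exhibit a $p^n$-colouring for the ``if'' direction. Your explicit colouring by the translates $I_z=\{(a,w_a+z):a\in\ints_p^i\}$ is a nice touch: the paper merely asserts that $\alpha(\Gamma_i)\omega(\Gamma_i)=p^{n+i}$ lets one partition the vertices into $p^n$ cocliques, whereas you actually construct the partition, which is more self-contained. One correction: your displayed connection set contradicts your own (correct) adjacency description. For $a\neq b$ the pair $(a,x),(b,y)$ is adjacent iff $x$ and $y$ are \emph{non-adjacent} in $\Gamma$ (allowing $x=y$), so the second part of $S$ should be $\{(w,z):w\neq 0,\ z\notin C\}$, i.e.\ $z\in\{0\}\cup\bigl(\ints_p^n\setminus(C\cup\{0\})\bigr)$, not $z\in\{0\}\cup C$; the set is still closed under negation since $C=-C$ implies the same for its complement. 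With that slip fixed, the argument is complete.
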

\begin{proof}
To prove that $\Gamma_i$ is a Cayley graph for the group $\ints_p^{n+i}$, we give a connection set $\mathcal{S}$ so that $\Gamma_i=X(\ints_p^{n+i},\mathcal{S})$. First we consider $\ints_p^{n+i}=\ints_p^i\times\ints_p^n$. Taking the elements
\[
\mathcal{K}=\{(x,{\bf 0_{p^n}})\,:\,x\in\ints_p^i\setminus\{{\bf 0_{p^i}}\}\}\cup\{({\bf 0_{p^i}},y)\,:\,y\in\ints_p^n\setminus\{{\bf 0_{p^n}}\}\}
\]
gives the connection set for $K_{p^i}\Box K_{p^n}$. We also need to have the edges connecting $(a,\alpha)$ and $(b,\beta)$ for all $a\neq b$ and $\alpha$ and $\beta$ non-adjacent in $\Gamma$. Since $\Gamma=X(\ints_p^n,C)$, the complement of $\Gamma$ is a Cayley graph for $\ints_p^n$ with connection set
\[
C'=\ints_p^n\setminus(C\cup\{{\bf 0_{p^n}}\}).
\]
Thus our desired connection set is
\[
\mathcal{S}=\mathcal{K}\cup\{(x,c)\,:\,x\in\ints_p^i\setminus\{{\bf 0_{p^i}}\}, c\in C'\}.
\]

Consider $\alpha(\Gamma_i)$. Since $\Gamma_i$ is vertex transitive, the clique-coclique bound (Corollary 4 in \cite{Cameron}) gives $\alpha(\Gamma_i)\omega(\Gamma_i)\leq p^{n+i}$. Since $\omega(\Gamma_i)\geq p^n$, we have that $\alpha(\Gamma_i)\leq p^i$. Moreover, if $S$ is a coclique in $\Gamma_i$, then $S$ contains at most one vertex from each copy of $K_{p^i}$ and at most one vertex from each copy of $K_{p^n}$. Thus if $(a,\alpha),(b,\beta)\in S$, then $a\neq b$ and $\alpha\neq \beta$. Finally, since $(a,\alpha),(b,\beta)\in S$ are non-adjacent, $\alpha$ and $\beta$ are adjacent in $\Gamma$. Therefore, $|S|\leq\omega(\Gamma)$. Using maximum cliques in $\Gamma$ we can construct cocliques in $\Gamma_i$ of size $\min\{p^i,\omega(\Gamma)\}$, thus $\alpha(\Gamma_i)=\min\{p^i,\omega(\Gamma)\}$.

Finally we show that $\chi(\Gamma_{p^i})=p^n$ if and only if $\omega(\Gamma)\geq p^i$. First, if $\omega(\Gamma)\geq p^i$, then $\alpha(\Gamma_i)=p^i$ and $\omega(\Gamma_i)=p^n$. Since $\alpha(\Gamma_i)\omega(\Gamma_i)=p^{n+i}$ we can partition the vertices of $\Gamma_i$ into $p^n$ cocliques of size $p^i$ using a clique of size $p^n$. This gives us a $p^n$-colouring of $\Gamma_i$ and proves that $\chi(\Gamma_{p^i})=p^n$.

Now suppose that $\omega(\Gamma)<p^i$. Then we have
\[
\chi(\Gamma_i)\geq \frac{p^{n+i}}{\alpha(\Gamma_i)}>\frac{p^{n+i}}{p^i}=p^n
\]
completing the proof.
\qed
\end{proof}

Suppose we have an oracle $\Omega$ that gives the chromatic number of Cayley graphs for $\ints_p^m$ in polynomial time. Given a graph $X$, take $\Gamma=\ints_p(X)_D$ from $X$ as usual ($\Gamma$ is a Cayley graph for $\ints_p^m$). Now for each $1\leq i\leq m$, we can construct the graph $\Gamma_i$. Since $\Gamma$ has size polynomial in the size of $X$, each $\Gamma_i$ has size polynomial in the size of $X$. Thus we can apply our oracle $\Omega$ and compute $\chi(\Gamma_i)$ for each $1\leq i\leq m$ in time polynomial in the size of $X$.

Now by Lemma \ref{AuxChi} we can find the value $y$ for which $\chi(\Gamma_y)=p^n$, and $\chi(\Gamma_i)>p^n$ for all $i>y$. This implies that $y=\lfloor \log_p\omega(X)\rfloor$; and using $\Omega$ we can compute $y$ in polynomial time. Since we can't compute $\omega(X)$ exactly, we can't conclude that our oracle $\Omega$ cannot exist directly from the fact that computing clique number is NP-Hard. However, not only is the clique number of a graph hard to compute, it is also hard to approximate.

H{\aa}stad \cite{Hastad} proved, under the assumption $\text{P}\neq\text{NP}$, that for any $\epsilon>0$, clique number cannot be efficiently approximated within $O(v^{1-\epsilon})$ (where $v$ is the size of the input graph). This means that there is no polynomial time algorithm that takes a graph $X$ on $v$ vertices and computes an output $\omega$ so that
\[
\frac{\omega(X)}{v^{1-\epsilon}}\leq \omega\leq \omega(X),
\]
for all $v$. Alternatively, we cannot have $\omega(X)/\omega\leq v^{1-\epsilon}$ for all $v$.

However, using the chromatic number oracle $\Omega$, the algorithm outlined above computes $\omega=p^y$ where $y=\lfloor \log_p\omega(X)\rfloor$ in polynomial time. Since $\omega\leq\omega(X)$, and $\omega(X)/\omega\leq p$, this contradicts H{\aa}stad's result, and implies that $\Omega$ cannot exist. Thus we have proved the following theorem.

\begin{theorem}\label{pChromatic}
Computing chromatic number cannot be done in polynomial time for the class of Cayley graphs for the groups $\ints_p^n$ where $p$ is a fixed prime (assuming $\text{P}\neq\text{NP}$).\qed
\end{theorem}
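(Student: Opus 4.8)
The plan is to give a polynomial-time Turing reduction from the problem of \emph{approximating} the clique number of an arbitrary graph to the problem of computing chromatic number for Cayley graphs on $\ints_p^n$, and then to invoke H{\aa}stad's inapproximability theorem. Suppose for contradiction that there is an oracle $\Omega$ computing $\chi$ of any Cayley graph for $\ints_p^m$ in time polynomial in its order. Given an arbitrary input graph $X$ on $v$ vertices, I would first apply Lemmas \ref{EfficientGoppa} and \ref{CliquesGammaD} to build, in time polynomial in $v$, the auxiliary graph $\Gamma=\ints_p(X)_D=X(\ints_p^m,\mathcal{C}')$ whose order $p^m$ is polynomially bounded in $v$ and whose clique number equals $\omega(X)$ (for $p\ge 5$; for $p=2,3$ it differs from $\omega(X)$ by at most a bounded amount, to which I return below).

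The heart of the reduction is to convert the exact value $\chi$ into a family of threshold tests for $\omega(\Gamma)$. For each $1\le i\le m$ I would form the graph $\Gamma_i$ of Lemma \ref{AuxChi}, a Cayley graph for $\ints_p^{m+i}$ of order $p^{m+i}\le p^{2m}$, hence still polynomial in $v$ and constructible in polynomial time directly from the connection set of $\Gamma$. Lemma \ref{AuxChi} gives $\chi(\Gamma_i)=p^m$ if and only if $\omega(\Gamma)\ge p^i$, and since the right-hand condition is monotone in $i$ there is a unique threshold: calling $\Omega$ on each $\Gamma_i$ identifies the largest index $y$ with $\chi(\Gamma_y)=p^m$, and this $y$ satisfies $p^y\le\omega(\Gamma)<p^{y+1}$, that is, $y=\lfloor\log_p\omega(\Gamma)\rfloor$. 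There are only $m=O(\log v)$ oracle calls, so the whole procedure runs in polynomial time.

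Finally I would output the estimate $\omega=p^y$. By construction $\omega\le\omega(\Gamma)$ and $\omega(\Gamma)/\omega<p$, so $\omega$ approximates $\omega(\Gamma)$ within the constant factor $p$; composing with the clique correspondence of Lemma \ref{CliquesGammaD} yields a constant-factor approximation of $\omega(X)$ computable in polynomial time. This contradicts H{\aa}stad's theorem, which forbids approximating clique number within $O(v^{1-\epsilon})$ for any $\epsilon>0$, and the contradiction establishes that $\Omega$ cannot exist.

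The step I expect to demand the most care is not any single calculation but the conceptual point that the argument must route through inapproximability rather than exact clique hardness: an exact $\chi$ oracle only resolves $\omega(\Gamma)$ up to the granularity $p^i$ of the coclique-size thresholds, so it pins $\omega(\Gamma)$ down only to within a factor of $p$, never exactly. This is precisely why the reduction produces a constant-factor approximation, and why H{\aa}stad's result, rather than the NP-hardness of exact clique, is the right tool. A secondary point needing an explicit remark is the $p=2,3$ caveat: there $\omega(\Gamma)$ can exceed $\omega(X)$, but only for bounded clique numbers, so the induced distortion is itself a bounded constant factor and leaves intact the constant-factor approximation of $\omega(X)$, which is all that H{\aa}stad's bound requires.
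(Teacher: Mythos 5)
Your proposal matches the paper's own argument essentially step for step: construct $\Gamma=\ints_p(X)_D$ via Lemmas \ref{EfficientGoppa} and \ref{CliquesGammaD}, use the graphs $\Gamma_i$ of Lemma \ref{AuxChi} as threshold tests to locate $y=\lfloor\log_p\omega(\Gamma)\rfloor$ with polynomially many oracle calls, and derive a factor-$p$ approximation of clique number contradicting H{\aa}stad's inapproximability theorem. The proof is correct; your explicit handling of the $p=2,3$ distortion is a small point the paper leaves implicit, but it does not change the route.
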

\noindent Theorem \ref{pChromatic} generalizes easily to the following theorem. The proof uses the same method we used to prove Theorem \ref{MainTheorem} from Theorem \ref{SpecialCase} verbatim.
\begin{theorem}\label{ChromCor}
Let $G$ be a finite group. Computing chromatic number cannot be done in polynomial time for Cayley graphs for the groups $G^n$ (assuming $\text{P}\neq\text{NP}$).\qed
\end{theorem}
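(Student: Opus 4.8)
The plan is to mimic, step for step, the passage from Theorem \ref{SpecialCase} to Theorem \ref{MainTheorem}, but applied to the chromatic-number reduction of Theorem \ref{pChromatic} rather than to the clique-number reduction. By Cauchy's Theorem I would first choose a prime $p$ dividing $|G|$ together with a subgroup $H\le G$ with $H\cong\ints_p$, and fix an integer $\alpha$ with $|G|\le p^\alpha$. Suppose toward a contradiction that an oracle $\Omega$ computes the chromatic number of any Cayley graph for a group $G^m$ in time polynomial in $|G|^m$. The goal is to use $\Omega$ to approximate $\omega(X)$ for an arbitrary input graph $X$ to within the constant factor $p$, contradicting Håstad's theorem \cite{Hastad} exactly as in the proof of Theorem \ref{pChromatic}.

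First I would run the construction behind Theorem \ref{pChromatic} inside $\ints_p$: build $\Gamma=\ints_p(X)_D$, a Cayley graph for some $\ints_p^n$ of size polynomial in $v$, and for each $1\le i\le n$ form the auxiliary graph $\Gamma_i=X(\ints_p^{n+i},\mathcal{S}_i)$ of Lemma \ref{AuxChi}. The new ingredient is to transport each $\Gamma_i$ into a Cayley graph for a power of $G$. Since the connection set $\mathcal{S}_i$ lives in $\ints_p^{n+i}$, the isomorphism $\ints_p\cong H$ carries it to a set $\mathcal{S}_i'\subseteq H^{n+i}\le G^{n+i}$, and I set $\Lambda_i=X(G^{n+i},\mathcal{S}_i')$. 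Because $\mathcal{S}_i'\subseteq H^{n+i}$, adjacency in $\Lambda_i$ can occur only within a single coset of $H^{n+i}$, so $\Lambda_i$ is the disjoint union of $(|G|/|H|)^{n+i}$ copies of $\Gamma_i$, one per coset of $H^{n+i}$ in $G^{n+i}$.

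The observation that makes this work for chromatic number is that the chromatic number of a disjoint union of isomorphic graphs equals the chromatic number of a single component; hence $\chi(\Lambda_i)=\chi(\Gamma_i)$ for every $i$. Consequently, applying $\Omega$ to the graphs $\Lambda_i$ returns exactly the values $\chi(\Gamma_i)$, and the threshold analysis of Theorem \ref{pChromatic} (find the largest $y$ with $\chi(\Gamma_y)=p^n$) recovers $y=\lfloor\log_p\omega(X)\rfloor$, up to the bounded $p=2,3$ correction inherited from Lemma \ref{CliquesGammaD}, yielding the estimate $p^y$ with $\omega(X)/p^y\le p$. It remains only to check that everything runs in polynomial time. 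Since $p^n$ is polynomial in $v$ and $i\le n$, we have $p^{n+i}\le(p^n)^2$, and then $|G|^{n+i}\le(p^{n+i})^\alpha\le(p^n)^{2\alpha}$, so each $\Lambda_i$ has size polynomial in $v$ and can be written down from its connection set in polynomial time; the at most $n$ oracle calls therefore cost polynomial time in total.

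I expect the only genuine point requiring care to be the verification that $\chi$ is unchanged by passing from $\Gamma_i$ to its coset-splitting cover $\Lambda_i$ — in contrast with the clique-number argument of Theorem \ref{MainTheorem}, where it was $\omega$ that was preserved by the very same splitting into cosets of $H^m$. Everything else (the size bound and the reduction to Håstad's inapproximability result) is imported essentially unchanged from Theorems \ref{MainTheorem} and \ref{pChromatic}.
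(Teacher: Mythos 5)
Your proposal is correct and follows exactly the route the paper intends: the paper proves Theorem \ref{ChromCor} by applying ``the same method we used to prove Theorem \ref{MainTheorem} from Theorem \ref{SpecialCase} verbatim,'' i.e.\ transporting the connection sets into $H^{n+i}\le G^{n+i}$ so that each auxiliary graph becomes a disjoint union of isomorphic copies, which preserves chromatic number. Your explicit check that $\chi(\Lambda_i)=\chi(\Gamma_i)$ and the size bound $|G|^{n+i}\le(p^n)^{2\alpha}$ are precisely the details the paper leaves implicit.
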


\section{Embeddings Revisited}\label{EmbeddingsRe}

Recall that in Section \ref{Embeddings}, we constructed a Cayley graph $Y$ from: a graph $X$; a group $G$; and, a function $h:V(X)\rightarrow G$. Specifically we took $g_i=h(i)$, and $Y=X(G,\mathcal{C})$ where
\[
\mathcal{C}=\{g_ig_j^{-1}\,:\,\text{$i,j$ adjacent in $X$}\}.
\]
We saw that $h:X\rightarrow Y$ is a homomorphism, and that $h$ is an embedding if and only if $h$ is injective, and $g_ig_j^{-1}\notin\mathcal{C}$ for any $i,j$ non-adjacent in $X$.

This construction is used by Babai and S{\'o}s in \cite{BabaiSos} to answer the following question. Given a graph $X$, is there a group $G$ so that $X$ embeds in a Cayley graph for $G$? As we have seen the answer to this question is yes (e.g., $X$ embeds in the free Cayley graph $G(X)$ for any group $G$). In \cite{BabaiSos} the authors approach this question using Sidon sets of the second kind. As we noted in Section \ref{Embeddings} if such a set exists, then every graph on at most $v$ vertices embeds in a Cayley graph for $G$ (in particular, the Cayley graph $Y$ described above). They show that if $G$ is an arbitrary group, then $G$ contains a Sidon set of size at least $O(|G|^{1/3})$. This implies that if $X$ is a graph on $v$ vertices, then $X$ embeds in a Cayley graph for any group $G$ with $|G|=O(v^3)$.

We can ask the following related question: given a graph $X$ on $v$ vertices, what is the smallest Cayley graph in which $X$ embeds? For $p$ prime, Babai and S{\'o}s show that $\ints_p^{2n}$ contains a Sidon set of order $p^n$. This implies that $X$ on $v$ vertices embeds in a Cayley graph for $\ints_p^m$ where $p$ is some prime, and $p^m=O(v^2)$. Using free Cayley graphs, and quotienting over a code, we can derive a similar result.

For $p=2$, we have that $X$ embeds in $\ints_2(X)$. Recall from Proposition \ref{d5InducedX} that if $D\subseteq \ints_2^v$ is a code with distance $d\geq 5$, then $X$ embeds in $\ints_2(X)_D$. For $p=2$, we can use a binary BCH code (see \cite{MacWilliamsSloane} Chapter 3) with $d\geq 5$ to find a small cubelike graph containing an embedding of $X$.

\begin{proposition}\label{BCHExist}
For any $m\geq 3$ and $0\leq t<2^{m-1}$, there is a binary BCH code with length $n=2^m-1$, rank $k\geq n-mt$, and distance $d\geq 2t+1$.
\end{proposition}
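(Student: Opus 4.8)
The plan is to exhibit the required code as a standard narrow-sense binary BCH code and then verify each of the three parameters directly. First I would fix a primitive element $\alpha$ of $\GF(2^m)$, so that $\alpha$ has multiplicative order $2^m-1=n$. The hypothesis $t<2^{m-1}$ guarantees $2t\leq n-1$, so that $\alpha,\alpha^2,\ldots,\alpha^{2t}$ are $2t$ distinct nonzero powers of $\alpha$. For each $1\leq i\leq 2t$ let $m_i(x)\in\GF(2)[x]$ be the minimal polynomial of $\alpha^i$ over $\GF(2)$, and take the code to be the cyclic code of length $n$ generated by $g(x)=\operatorname{lcm}(m_1(x),\ldots,m_{2t}(x))$. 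Equivalently, the code is the set of polynomials $c(x)$ of degree less than $n$, viewed as vectors in $\GF(2)^n$, satisfying $c(\alpha^j)=0$ for all $1\leq j\leq 2t$. Since each $\alpha^i$ is an $n$-th root of unity, $g(x)$ divides $x^n-1$, so this is a genuine cyclic code of block length $n$ with dimension $k=n-\deg g$.

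For the distance bound I would invoke the Vandermonde argument underlying the BCH bound. Suppose $c(x)$ is a nonzero codeword whose support has size $w$, with nonzero coordinates in positions $i_1<\cdots<i_w$. The conditions $c(\alpha^j)=0$ for $1\leq j\leq 2t$ form a homogeneous linear system in the nonzero coefficients of $c$. If $w\leq 2t$, then restricting to $j=1,\ldots,w$ gives a $w\times w$ homogeneous system whose coefficient matrix, after pulling a factor of $\alpha^{i_\ell}$ from each column, is a Vandermonde matrix in the distinct nonzero elements $\alpha^{i_1},\ldots,\alpha^{i_w}$. This matrix is nonsingular, forcing all the coefficients to vanish, a contradiction. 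Hence every nonzero codeword has weight at least $2t+1$, and $d\geq 2t+1$.

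For the rank bound I would estimate $\deg g$ using the Frobenius automorphism of $\GF(2^m)$. Over $\GF(2)$ the conjugates of $\alpha^i$ are $\alpha^i,\alpha^{2i},\alpha^{4i},\ldots$, so $\alpha^{2i}$ is a root of $m_i(x)$ and therefore $m_{2i}(x)=m_i(x)$. Iterating this observation, each even-indexed minimal polynomial among $m_1,\ldots,m_{2t}$ coincides with the one indexed by its odd part, so $g(x)$ is already the least common multiple of the $t$ polynomials $m_1,m_3,\ldots,m_{2t-1}$. Each minimal polynomial of an element of $\GF(2^m)$ has degree dividing $m$, hence degree at most $m$, so $\deg g\leq mt$ and $k=n-\deg g\geq n-mt$, as claimed.

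The main obstacle is the distance bound: the length and rank claims amount to bookkeeping with cyclotomic cosets, whereas $d\geq 2t+1$ rests on the nonsingularity of the associated Vandermonde system. The point demanding care there is that the $2t$ designed zeros $\alpha,\ldots,\alpha^{2t}$ are \emph{consecutive} powers of $\alpha$, which is precisely what makes the relevant $w\times w$ submatrix Vandermonde; for a non-consecutive set of prescribed roots the column-scaling step would not produce a Vandermonde matrix and the argument would break down.
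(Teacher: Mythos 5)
Your proof is correct: it is the standard construction and analysis of a narrow-sense binary BCH code, with the designed-distance (Vandermonde/BCH bound) argument and the cyclotomic-coset count $\deg g\leq mt$ both carried out accurately. The paper itself offers no proof of this proposition, citing it as a known fact from MacWilliams and Sloane (Chapter~3), and your argument is precisely the textbook proof being invoked there.
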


\noindent From this fact, we have the immediate result.
\begin{theorem}\label{CubelikeQuadratic}
If $X$ is a graph on $v$ vertices, then there is a cubelike graph with $O(v^2)$ vertices that contains an induced copy of $X$.
\end{theorem}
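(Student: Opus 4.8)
The plan is to combine the induced-embedding result of Proposition \ref{d5InducedX} with the existence of binary BCH codes of distance at least $5$ guaranteed by Proposition \ref{BCHExist}. Recall that for $p=2$, Proposition \ref{d5InducedX} says that whenever $D\subseteq\ints_2^v$ is a code of distance $d\geq 5$, the quotient $\ints_2(X)_D$ contains an induced copy of $X$; and by Corollary \ref{CQuotient} this quotient is itself a Cayley graph for $\ints_2^v/D\cong\ints_2^{v-k}$, hence a cubelike graph on $2^{v-k}$ vertices, where $k=\dim D$. Thus to force the vertex count down to $O(v^2)$ it suffices to exhibit, on a block length comparable to $v$, a code of distance at least $5$ whose codimension is at most $2\log_2 v+O(1)$.

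The BCH codes of Proposition \ref{BCHExist} deliver exactly this, once we reconcile their block length $2^m-1$ with the requirement that $D$ live in $\ints_2^v$. First I would pad $X$ with isolated vertices to obtain a graph $X'$ on $n=2^m-1$ vertices, where $m$ is the smallest integer with $2^m-1\geq v$ (and $m\geq 3$, which holds for $v\geq 4$, the finitely many remaining cases being absorbed into the constant). Since isolated vertices create no adjacencies, any induced copy of $X'$ contains an induced copy of $X$. Next I would apply Proposition \ref{BCHExist} with $t=2$ to obtain a binary BCH code $D$ of length $n$, distance $d\geq 2t+1=5$, and rank $k\geq n-mt=n-2m$, so that the codimension satisfies $n-k\leq 2m$.

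Assembling these pieces, Proposition \ref{d5InducedX} applied to $X'$ and $D$ shows that the cubelike graph $\ints_2(X')_D$ contains an induced copy of $X'$, and hence of $X$. It remains to bound its order. By Corollary \ref{CQuotient} it has $2^{n-k}\leq 2^{2m}=4^m$ vertices, while the minimality of $m$ gives $2^{m-1}\leq v$, so $2^m\leq 2v$ and $4^m=(2^m)^2\leq 4v^2$. Therefore $\ints_2(X')_D$ is a cubelike graph with $O(v^2)$ vertices containing an induced copy of $X$, as required.

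I expect the only genuine obstacle to be the mismatch between the fixed block lengths $2^m-1$ available for BCH codes and the desired block length $v$; the padding argument resolves this cleanly while keeping $n$ within a factor of two of $v$, which is precisely what preserves the quadratic bound. Everything else is routine bookkeeping with the rank and distance guarantees already recorded in Propositions \ref{d5InducedX} and \ref{BCHExist} and the identification of the quotient as a Cayley graph in Corollary \ref{CQuotient}.
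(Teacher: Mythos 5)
Your proposal is correct and follows essentially the same route as the paper: pad $X$ with isolated vertices up to block length $2^m-1$, take the $t=2$ BCH code of distance at least $5$ from Proposition \ref{BCHExist}, and invoke Proposition \ref{d5InducedX} to get the induced copy inside the quotient of order $2^{n-k}=O(v^2)$. The only difference is cosmetic bookkeeping in how the final bound is written ($4^m\leq 4v^2$ versus the paper's $(v')^2<4v^2$).
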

\begin{proof}
Take $m$ to be the smallest integer with $v\leq 2^m-1$. Let $X'$ be the graph obtained by adding $2^m-1-v$ isolated vertices to $X$. Now $|X'|=v'< 2v$. Taking $t=2$ in Proposition \ref{BCHExist}, there is a BCH code $D$ in $\ints_2^{v'}$ with distance $d\geq 5$ and rank $k\geq v'-2\lfloor\log_2(v')\rfloor$. Now $\ints_2(X')_D$ contains an induced copy of $X'$, and hence an induced copy of $X$. Finally, $|\ints_2(X')_D|=2^{v'-k}\leq (v')^2$.\qed
\end{proof}

Theorem \ref{CubelikeQuadratic} shows that when $p=2$, using our approach we can construct Cayley graphs containing an induced copy of $X$ of the same order as those constructed in \cite{BabaiSos} (though with a worse coefficient). For $p\neq 2$ it may be possible to use $p$-ary BCH codes to obtain a similar result.

\section{Neighbourhood Structure}\label{Structure}

We finish with an observation on the structure of the neighbourhoods of the free Cayley graphs $G(X)$ for finite Abelian groups $G$. Since these graphs are vertex transitive, it will suffice to describe the neighbourhood of a particular vertex (for convenience we will use the vertex ${\bf 0}$, the vector whose components are all $0_G$).

For a graph $X$, define the \emph{triangle graph of $X$} to be the graph $T(X)$ with vertex set $E(X)$ where $e,f\in E(X)$ are adjacent if and only if they lie in a triangle in $X$. Note that $T(X)$ is a subgraph of $L(X)$.

\begin{lemma}\label{DoubleCover}
If $G$ is a finite Abelian group with exponent $m\geq 4$, then $G(X)[{\bf 0}]$ is a $2$-fold cover of $T(X)$.
\end{lemma}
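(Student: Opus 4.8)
The plan is to show that the neighbourhood $G(X)[{\bf 0}]$ in the free Cayley graph admits a covering map onto the triangle graph $T(X)$ in which every fibre has exactly two elements. Recall from Section \ref{FreeCayleyGraphs} and Lemma \ref{FreeGroupA} that the vertices adjacent to ${\bf 0}$ are precisely the connection set elements, which (since $m \geq 4$ gives distinct $2$-sums by Proposition \ref{3sum}) are the $2|E(X)|$ distinct vectors $\pm(g_i - g_j)$ for each edge $\{i,j\} \in E(X)$. So I would first define the candidate map $\phi : V(G(X)[{\bf 0}]) \to V(T(X)) = E(X)$ by sending both $g_i - g_j$ and $g_j - g_i = -(g_i - g_j)$ to the edge $\{i,j\}$. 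This is manifestly a surjection with $|\phi^{-1}(\{i,j\})| = 2$ for every edge, which establishes the $2$-to-$1$ fibre structure immediately.

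\textbf{Verifying the covering conditions.} The substance is to check that $\phi$ is a covering map, i.e.\ a surjective local isomorphism. The key computational input is already supplied by the proof of Lemma \ref{AbGroup}: two neighbours $g_a - g_b$ and $g_c - g_d$ of ${\bf 0}$ are adjacent in $G(X)$ exactly when $\{a,b\}$ and $\{c,d\}$ share a common vertex and the third edge closes a triangle. Concretely, the analysis there shows each triangle $\{a,b,c\}$ in $X$ lifts to an induced $6$-cycle on the six vertices $\pm(g_a-g_b), \pm(g_b-g_c), \pm(g_a-g_c)$, with the two valid sign-consistent triangles giving the edges of that $6$-cycle. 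I would first show $\phi$ is a homomorphism: if $g_a - g_b$ and $g_c - g_d$ are adjacent, then by that argument their images are distinct edges lying in a common triangle of $X$, hence adjacent in $T(X)$. Then, for the local isomorphism property, I would fix a vertex $u = g_a - g_b$ of $G(X)[{\bf 0}]$ and analyze its neighbours within $G(X)[{\bf 0}]$, showing the restriction of $\phi$ to the neighbourhood of $u$ is a bijection onto the neighbourhood of $\phi(u) = \{a,b\}$ in $T(X)$.

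\textbf{The bijection on neighbourhoods — the main obstacle.} This is where the real work lies. An edge $\{a,b\}$ is adjacent in $T(X)$ to $\{a,c\}$ (resp.\ $\{b,c\}$) for each vertex $c$ completing a triangle on $a$ (resp.\ on $b$). For surjectivity and injectivity of the local map I must verify that $u = g_a - g_b$ has exactly one neighbour in $G(X)[{\bf 0}]$ mapping to each such triangle-edge. The $6$-cycle decomposition from Lemma \ref{AbGroup} is the tool: for a fixed triangle $\{a,b,c\}$, the vertex $g_a - g_b$ sits on a $6$-cycle in which it is adjacent to exactly $g_a - g_c$ and $g_c - g_b$, which map to $\{a,c\}$ and $\{b,c\}$ respectively — so each of the two triangle-edges incident to $\{a,b\}$ through $c$ receives exactly one preimage neighbour of $u$. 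The delicate point, and the step I expect to be the main obstacle, is ruling out that $u$ has a neighbour mapping to an edge already accounted for, or to an edge not sharing a vertex with $\{a,b\}$; this requires invoking the argument in Lemma \ref{AbGroup} that any adjacency between two neighbours of ${\bf 0}$ forces the summand indices to match up as a multiset, so no connection-set element involving a fourth vertex index can create a spurious edge. Assembling these observations shows $\phi$ restricts to a bijection on each neighbourhood, completing the proof that $\phi$ is a $2$-fold covering map.
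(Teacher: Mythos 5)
Your proposal is correct and follows essentially the same route as the paper: you define the same two-to-one map sending $\pm(g_i-g_j)$ to the edge $\{i,j\}$, invoke the adjacency characterization and $6$-cycle structure from the proof of Lemma \ref{AbGroup} to get the homomorphism property, and verify the local bijection on neighbourhoods by the same index-matching argument that rules out spurious adjacencies. The paper's proof carries out the injectivity case analysis on indices explicitly, which is the step you flag as the remaining work, but the tool you name for it is exactly the one the paper uses.
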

\begin{proof}
Define $h:\mathcal{C}\rightarrow V(T(X))$ by
\[
h(g_i-g_j)=h(g_j-g_i)=\{i,j\}.
\]
From the definition of $\mathcal{C}$ we see that $h$ is clearly a surjection, and that $|h^{-1}(\{i,j\})|=2$ for all $\{i,j\}\in E(X)$. It remains to show that $h$ is a homomorphism, and a local isomorphism.

Recall from the proof of Lemma \ref{AbGroup} that $g_i-g_j$ is adjacent to $g_k-g_l$ in $G(X)[{\bf 0}]$ if and only if either $k=i$ or $j=l$ and $ijl$ or $ijk$ respectively is a triangle in $X$. Thus if $g_i-g_j$ is adjacent to $g_i-g_l$, then $\{i,j\}$ and $\{i,l\}$ are edges of $X$ that lie in a triangle. So $\{i,j\}$ is adjacent to $\{i,l\}$ in $T(X)$. (The case $g_i-g_j$ adjacent to $g_k-g_j$ is similar.) Therefore $h$ is a homomorphism.

Finally, consider the map induced by $h$ between the neighbours of $g_i-g_j$ and the neighbours of $\{i,j\}$. If $\{i,j\}$ is adjacent to $\{i,l\}$ in $T(X)$, then $g_i-g_j$ is adjacent to $g_i-g_l$ and $h(g_i-g_l)=\{i,l\}$, so $h$ induces a surjection. If $g_k-g_l$ and $g_s-g_t$ are both neighbours of $g_i-g_j$ in $G(X)$, then $h(g_k-g_l)=\{k,l\}$ and $h(g_s-g_t)=\{s,t\}$. If $\{k,l\}=\{s,t\}$, then either $k=s$ or $k=t$. If $k=s$, then
\[
g_k-g_l=g_s-g_t.
\]
If $k=t$, then
\[
g_k-g_l=-(g_s-g_t).
\]
However, in order to be a neighbour of $g_i-g_j$ we must have that either $k=i$ or $l=j$, and either $s=i$ or $t=j$. If $k=i$, then $t=i\neq j$ so we must have $s=i=t$ which is a contradiction. The other cases give similar contradictions. Thus $h$ induces an injection, and the induced map is a bijection.
\qed
\end{proof}

If the exponent of $G$ is two or three, Lemma \ref{DoubleCover} does not hold. However, for $p=2$ and $p=3$ we can still describe the structure of $\ints_p(X)[{\bf 0}]$, and prove that it is related to $T(X)$.

\begin{lemma}\label{DoubleCover2}
$\ints_2(X)[{\bf 0}]$ is isomorphic to $T(X)$.
\end{lemma}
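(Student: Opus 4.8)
The plan is to exhibit an explicit bijection between the vertex sets and verify it is an isomorphism by a short case analysis. Since $\ints_2(X)=X(\ints_2^v,\mathcal{C})$ is a Cayley graph, the neighbours of ${\bf 0}$ are exactly the connection set $\mathcal{C}=\{g_i+g_j\,:\,\{i,j\}\in E(X)\}$, and two elements $c_1,c_2\in\mathcal{C}$ are adjacent in the induced subgraph $\ints_2(X)[{\bf 0}]$ if and only if $c_1+c_2\in\mathcal{C}$ (over $\ints_2$ differences and sums coincide). By the first part of Proposition \ref{Z2Sums}, the $2$-sums $g_i+g_j$ for distinct $i,j$ are distinct, so the map $h(g_i+g_j)=\{i,j\}$ is a well-defined bijection from $V(\ints_2(X)[{\bf 0}])=\mathcal{C}$ onto $V(T(X))=E(X)$. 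It remains only to show that $h$ both preserves and reflects adjacency.

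First I would take two distinct vertices $g_i+g_j$ and $g_k+g_l$, which correspond to distinct edges $\{i,j\}\neq\{k,l\}$, and compute their sum $g_i+g_j+g_k+g_l$. Two distinct edges either share exactly one vertex or are disjoint, giving two cases. In the shared-vertex case, say $j=l$, the relation $2g_j={\bf 0}$ collapses the sum to $g_i+g_k$, which lies in $\mathcal{C}$ precisely when $\{i,k\}\in E(X)$; since $\{i,j\}$ and $\{k,j\}$ are already edges, this is exactly the condition that the two edges lie in a common triangle of $X$, i.e. that $\{i,j\}$ and $\{k,l\}$ are adjacent in $T(X)$. Disjoint edges can never lie in a common triangle, so to finish I must show that when $i,j,k,l$ are distinct the sum $g_i+g_j+g_k+g_l$ is not in $\mathcal{C}$.

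This last step is the only real obstacle, and I would settle it using the realization of the canonical generators as coordinate vectors. Suppose for contradiction $g_i+g_j+g_k+g_l=g_s+g_t$ for some edge $\{s,t\}$. Evaluating at the coordinate that isolates $g_i$ (where $g_i$ has entry $1$ and every other generator has entry $0$) forces exactly one of $s,t$ to equal $i$; evaluating at the coordinate isolating $g_j$ then forces the other to equal $j$, so $\{s,t\}=\{i,j\}$. Substituting back gives $g_k+g_l={\bf 0}$, hence $g_k=g_l$, contradicting $k\neq l$. (Alternatively one can rearrange the equation into an identity between two $3$-sums and invoke the second part of Proposition \ref{Z2Sums}, handling the degenerate index-multiset of size two by writing the single surviving generator as $g_t+g_t+g_t$.) Combining the two cases, $g_i+g_j$ and $g_k+g_l$ are adjacent in $\ints_2(X)[{\bf 0}]$ if and only if $\{i,j\}$ and $\{k,l\}$ are adjacent in $T(X)$, so $h$ is an isomorphism.
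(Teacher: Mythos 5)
Your proof is correct and takes essentially the same route as the paper: the same bijection $h(g_i+g_j)=\{i,j\}$ from $\mathcal{C}$ to $E(X)$, followed by a check that adjacency corresponds to lying in a common triangle. You merely fill in two details the paper leaves implicit, namely the reverse direction (triangle in $X$ implies adjacency in $\ints_2(X)[{\bf 0}]$) and the coordinate argument showing that disjoint edges cannot give adjacent connection-set elements.
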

\begin{proof}
Recall that for $p=2$, we have $|\mathcal{C}|=|E(G)|$. Define the function $h:\mathcal{C}\rightarrow V(T(X))$ by $h(g_i+g_j)=\{i,j\}$.

It is easy to see that $h$ is a bijection. Moreover, if $g_i+g_j$ is adjacent to $g_k+g_l$ in $\ints_2(X)[{\bf 0}]$, then without loss of generality, $j=k$ and $g_j+g_l\in\mathcal{C}$. Thus $ijl$ is a triangle in $X$ and $\{i,j\}$ and $\{j,l\}$ are adjacent in $T(X)$.\qed
\end{proof}

For $\ints_3$, recall from Section \ref{Zp} that the 3-sums of $\mathcal{C}$ are not distinct. However, the only problem is that $3(g_i-g_j)={\bf 0}$ for all $g_i-g_j\in\mathcal{C}$, or that $g_i-g_j$ is adjacent to $-(g_i-g_j)$ for all $g_i-g_j\in\mathcal{C}$. These edges give a perfect matching of $\ints_3(X)[{\bf 0}]$. Note that each of these edges lies in a fibre of the covering map $h$ defined in the proof of Lemma \ref{DoubleCover}. This leads us to the following Lemma, the proof of which we omit (the proof is exactly the same as the proof of Lemma \ref{DoubleCover}).

\begin{lemma}\label{DoubleCover3}
Let $M$ be the perfect matching of $\ints_3(X)[{\bf 0}]$ given by the edges $\{g_i-g_j,-(g_i-g_j)\}$ for all $\{i,j\}\in E(X)$. Then $\ints_3(X)[{\bf 0}]\setminus M$ is a $2$-fold cover of $T(X)$.\qed
\end{lemma}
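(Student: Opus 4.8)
The plan is to adapt the covering-map argument from Lemma~\ref{DoubleCover} to the setting of $\ints_3$, accounting for the single way in which exponent three differs from exponent at least four. The key observation, already noted in the excerpt, is that the only obstruction to having distinct $3$-sums in $\ints_3$ is the relation $3(g_i-g_j)={\bf 0}$, which manifests graph-theoretically as the edges $\{g_i-g_j,-(g_i-g_j)\}$. These edges form the perfect matching $M$, and they are precisely the extra adjacencies present in $\ints_3(X)[{\bf 0}]$ that do not appear in the higher-exponent case. So the natural strategy is to delete $M$ and show that what remains behaves identically to $G(X)[{\bf 0}]$ for exponent $m\geq 4$.

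First I would define the same map $h\colon\mathcal{C}\to V(T(X))$ used in the proof of Lemma~\ref{DoubleCover}, namely $h(g_i-g_j)=h(g_j-g_i)=\{i,j\}$. Exactly as before, this is a surjection onto $V(T(X))=E(X)$ with every fibre of size $2$, since the $2$-sums of the generators remain distinct in $\ints_3$ (as recorded in Section~\ref{Zp}). The content of the lemma is then to verify that $h$, regarded as a map on the graph $\ints_3(X)[{\bf 0}]\setminus M$, is a homomorphism and a local isomorphism. The point is that Proposition~\ref{Z3Sums} gives us the same dichotomy as genuine $3$-sum distinctness, \emph{except} in the case $|\{i,j,k\}|=|\{r,s,t\}|=1$; and this exceptional case is exactly what produces the matching edges $\{g_i-g_j,-(g_i-g_j)\}$. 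Once those edges are removed, the analysis of which connection-set elements are adjacent in the neighbourhood of ${\bf 0}$ proceeds word for word as in Lemma~\ref{AbGroup} and Lemma~\ref{DoubleCover}: an adjacency $g_i-g_j\sim g_k-g_l$ forces $i=k$ or $j=l$ together with a triangle in $X$, so $h$ carries it to an edge of $T(X)$, and the induced map on neighbourhoods is a bijection by the same case-checking used previously.

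The step I expect to require the most care is confirming that deleting $M$ removes \emph{exactly} the spurious adjacencies and nothing more. Concretely, one must check that whenever $g_i-g_j$ is adjacent in $\ints_3(X)[{\bf 0}]$ to some $g_k-g_l$ with $\{k,l\}\neq\{i,j\}$, that adjacency already conforms to the triangle pattern and is not an edge of $M$; and conversely that every edge of $M$ genuinely arises from the exceptional clause of Proposition~\ref{Z3Sums} rather than from a legitimate triangle. This amounts to re-running the sign-analysis of Lemma~\ref{AbGroup}, where the equation $(g_i-g_j)+(g_s-g_t)=g_k-g_l$ is rearranged into a $3$-sum identity and Proposition~\ref{Z3Sums} is applied in place of outright distinctness. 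Because the authors state that this verification is identical to the proof of Lemma~\ref{DoubleCover}, the honest summary is that the substantive work was already done in establishing Proposition~\ref{Z3Sums} and Lemma~\ref{DoubleCover}; here one simply observes that the matching $M$ isolates the single anomalous case, so that $\ints_3(X)[{\bf 0}]\setminus M$ reproduces the higher-exponent neighbourhood structure and is therefore a $2$-fold cover of $T(X)$ by the same map $h$.
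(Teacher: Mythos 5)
Your proposal is correct and follows essentially the same route as the paper, which omits the proof with the remark that it is identical to that of Lemma~\ref{DoubleCover} once one notes that the anomalous adjacencies $\{g_i-g_j,-(g_i-g_j)\}$ arising from the exceptional clause of Proposition~\ref{Z3Sums} are exactly the matching $M$ and lie within the fibres of $h$. Your added care in checking that deleting $M$ removes precisely these adjacencies and nothing else is the right point to verify, and it goes through as you describe.
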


\section{Open Problems}\label{OpenProblems}

There are a few obvious avenues of research suggested by our results; we address three of them here. First, we have proven that computing chromatic number cannot be done in polynomial time for a class of Cayley graphs. We were not able to prove a direct analogue of Codenotti et al.'s result for circulants.
\begin{problem}\label{NPHq}
If $G$ is a fixed finite group, is computing chromatic number NP-Hard for the class of Cayley graphs for the groups $G^n$?
\end{problem}

We have considered two computational problems that are NP-Hard for the class of all graphs. An immediate question is whether one's favourite NP-Hard computational problem for the class of all graphs remains NP-Hard when restricted to this class of Cayley graphs.

More restrictively, we might ask whether our construction using free Cayley graphs can be used to prove the NP-Hardness of any other problems. Motivated by Theorem \ref{ChromCor}, one might start with other flavours of colouring problems. The problem of computing the edge chromatic number of a graph $X$ is equivalent to computing $\chi(L(X))$. The connections between $G(X)$ and $L(X)$ given in Section \ref{Structure} suggest that edge chromatic number is worth consideration.
\begin{problem}\label{ECNq}
If $G$ is a fixed finite group, is computing edge chromatic number NP-Hard for the class of Cayley graphs for the groups $G^n$?
\end{problem}

Note that the Cayley graph $X=X(G,C)$ is $|C|$-regular. So by Vizing's Theorem $\chi'(X)$ is either $|C|$ or $|C|+1$. If $G=\ints_2^m$, then every element of the connection set corresponds to a perfect matching of $X$, and $\chi'(X)=|C|$. So Problem \ref{ECNq} is easily resolved for cubelike graphs.

Another avenue of research suggested by our results, is to further understand free Cayley graphs. Free Cayley graphs are a recent invention, and have been studied very little. Almost any question you could ask about these graphs is open. In our construction we showed that $\omega(G(X))$ is easily recoverable from $\omega(X)$, and that ``most of the time'' these parameters are equal. What about independence number, or chromatic number?
\begin{problem}
If $G$ is a finite group, can $\chi(G(X))$ be derived from $\chi(X)$?
\end{problem}
\noindent Answering this question would hopefully go some way to resolving open questions about the chromatic numbers of cubelike graphs.

\vspace{0.25cm}
\begin{center}{\bf Acknowledgements}\end{center}
\vspace{0.25cm}

The authors thank Noga Alon for his helpful comments; and an anonymous referee for some important corrections and improvements to the presentation.

\nocite{*}
\bibliographystyle{plain}
\bibliography{Complexity}

\end{document}